\newcommand{\inner}[1]{\left\langle #1 \right\rangle}
\newcommand{\norm}[1]{\left\Vert #1\right\Vert}
\newcommand{\bb}[1]{\mathbb{#1}}
\newcommand{\conv}[0]{\mathrm{conv}\,}
\newcommand{\X}{{ \ca{X} }}
\newcommand{\ca}[1]{\mathcal{#1}}
\newcommand{\M}[0]{\mathcal{M}}
\newcommand{\tp}{^\top}
\newcommand{\A}{\ca{A}}
\newcommand{\mk}{{m_{k} }}
\newcommand{\mkp}{{m_{k+1} }}
\newcommand{\xk}{{x_{k} }}
\newcommand{\yk}{{y_{k} }}
\newcommand{\wk}{{w_{k} }}
\newcommand{\zk}{{z_{k} }}
\newcommand{\xkp}{{x_{k+1} }}
\newcommand{\wkp}{{w_{k+1} }}
\newcommand{\zkp}{{z_{k+1} }}
\newcommand{\ykp}{{y_{k+1} }}
\newcommand{\D}{\ca{D}}
\newcommand{\Rn}{\mathbb{R}^n}
\newcommand{\Rp}{\mathbb{R}^p}
\newcommand{\Omegajoint}{{\Omega}}
\newcommand{\revise}[1]{#1}
\newtheorem{theo}{Theorem}[section]
\newtheorem{lem}[theo]{Lemma}
\newtheorem{prop}[theo]{Proposition}
\newtheorem{defin}[theo]{Definition}
\newtheorem{rmk}[theo]{Remark}
\newtheorem{assumpt}[theo]{Assumption}
\title{A Hybrid Subgradient Method for Nonsmooth Nonconvex Bilevel Optimization}
\author{
Nachuan Xiao\thanks{School of Data Science, The Chinese University of Hong Kong, Shenzhen, China
  (\email{xncxy@cuhk.edu.cn}).}
\and 
Xiaoyin Hu\thanks{School of Mathematical Sciences, Shenzhen University, Shenzhen 518060, Guangdong, China.  (\email{hxy@amss.ac.cn}).}
\and 
Xin Liu\thanks{State Key Laboratory of Scientific and Engineering Computing, Academy of Mathematics and Systems Science, Chinese Academy of Sciences, and University of Chinese Academy of Sciences, China (\email{liuxin@lsec.cc.ac.cn}).}
\and 
Kim-Chuan Toh\thanks{Department of Mathematics, National University of Singapore, Singapore 119076. (\email{mattohkc@nus.edu.sg}).}
}
\begin{document}

\maketitle

\begin{abstract}
In this paper, we focus on the nonconvex-nonconvex bilevel optimization problem (BLO), where both upper-level and lower-level objectives are nonconvex, with the upper-level problem potentially being nonsmooth. We develop a two-timescale momentum-accelerated subgradient method (TMG) that employs two-timescale stepsizes, and establish its local convergence when initialized within a sufficiently small neighborhood of the feasible region. To develop a globally convergent algorithm for (BLO), we introduce a feasibility restoration scheme (FRG) that drives iterates toward the feasible region. Both (TMG) and (FRG) only require the first-order \revise{information} of the upper-level and lower-level objective functions, ensuring efficient computations in practice. We then develop a novel hybrid method that alternates between (TMG) and (FRG) and adaptively estimates its hyperparameters. Under mild conditions, we establish the global convergence properties of our proposed algorithm. Preliminary numerical experiments demonstrate the high efficiency and promising potential of our proposed algorithm. 
\end{abstract}

\begin{keywords}
bilevel optimization, nonsmooth optimization, subgradient method, hybrid method, penalty function
\end{keywords}

\begin{MSCcodes}
65K05, 90C06
\end{MSCcodes}

\section{Introduction}

In this paper, we consider the following nonconvex-nonconvex bilevel optimization problem:
\begin{equation}
\label{Prob_BLO}
\tag{BLO}
\begin{aligned}
\min_{x \in \Rn, ~y \in \Rp} \quad & f(x, y), \\
\text{s. t.} \quad & y \in \ca{S}(x).
\end{aligned}
\end{equation}
Here $f: \Rn \times \Rp \to \bb{R}$ is the objective function of the upper-level (UL) subproblem, which is assumed to be locally Lipschitz continuous but possibly nonsmooth and nonconvex over $\Rn \times \Rp$ throughout this paper. Moreover, $\ca{S}(x)$ represents the set of stationary points of the following lower-level (LL) subproblem:
\begin{equation}
\label{Eq_BLO_LL}
\min_{y \in \Rp}\quad g(x, y),
\end{equation}
where $g: \Rn \times \Rp \to \bb{R}$ is three times continuously differentiable and generally nonconvex over $\Rn \times \Rp$.

The optimization problem of the form \eqref{Prob_BLO} has found widespread applications in deep learning, particularly in reinforcement learning \cite{konda1999actor}, hyperparameter optimization \cite{tappen2008logistic,hutter2011sequential,franceschi2017forward,lorraine2020optimizing}, and meta-learning \cite{finn2017model,samuel2009learning}. The nested structure of bilevel optimization problems presents significant challenges to the development of efficient optimization algorithms, especially when $\ca{S}(x)$ represents the global minimizers of the LL subproblem \eqref{Eq_BLO_LL}. Even in a simplified case where both UL and LL subproblems involve linear functions with linear constraints, finding the global minimizers of \eqref{Prob_BLO} is NP-hard \cite{buchheim2023bilevel}. Furthermore, as shown in \cite{bolte2024geometric}, minimizing any lower semicontinuous function is equivalent to solving a specific bilevel optimization problem of the form \eqref{Prob_BLO} with differentiable UL and LL subproblems. 

Consequently, a great number of existing works on bilevel optimization focus on cases where $\ca{S}(x)$ coincides with the first-order stationary points of the LL subproblem \eqref{Eq_BLO_LL}, which yields the following equality-constrained optimization problem (ECP), 
\begin{equation}
    \label{Prob_Con}
    \tag{ECP}
    \begin{aligned}
        \min_{x \in \Rn, ~y \in \Rp} \quad & f(x, y), \\
        \text{s. t.} \quad & \nabla_y g(x, y) = 0.  
    \end{aligned}
\end{equation}
Here we denote $\M := \{(x, y) \in \Rn \times \Rp: \nabla_y g(x, y) = 0\}$ as the feasible region of \eqref{Prob_Con}. 

For solving the bilevel optimization problem, most existing works establish global convergence for their developed optimization approaches under certain regularity conditions. 
A widely adopted regularity condition for bilevel optimization  \eqref{Prob_BLO} requires the strong convexity of $g(x, y)$ with respect to its $y$-variable for any fixed $x \in \Rn$. Under such regularity conditions, the bilevel optimization problem \eqref{Prob_BLO} is equivalent to the constrained optimization problem \eqref{Prob_Con}. Moreover, the implicit function theorem guarantees that \revise{$\ca{S}(x)$} is singleton-valued and continuously differentiable over $\mathbb{R}^n$. Based on that regularity condition \revise{for} LL subproblems, some existing optimization approaches for \eqref{Prob_Con} \cite{domke2012generic,pedregosa2016hyperparameter,ghadimi2018approximation,grazzi2020iteration,ji2021bilevel} employ a {\it double-loop} scheme by introducing an inner loop in each iteration to obtain an approximated estimation for $\ca{S}(x)$ (i.e., an approximated projection onto $\M$). However, these approaches may suffer from poor performance as one has to take multiple steps in the inner loop to solve the LL subproblems to a desired accuracy \cite{khanduri2021near}. 

To address these limitations, several {\it single-loop} approaches \cite{chen2021single,hong2020two,khanduri2021near} are proposed by updating the $x$- and $y$-variables simultaneously, hence avoiding inner loops for finding \revise{approximate} solutions of the LL subproblems. Additionally, \cite{hu2022improved} develops an exact penalty function for \eqref{Prob_Con}, enabling the direct application of various unconstrained optimization techniques. However, these approaches are not  Hessian-free, in the sense that they rely on the exact or approximated computation of \revise{$\nabla_{yy}^2 g(x,y)$} and its inverse, which makes the computations intractable under large-scale settings. 

Beyond the strong convexity of LL subproblems, several existing efficient approaches are developed \cite{liu2022bome,huang2023momentum,shen2023penalty,xiao2024alternating,lu2024slm} with slightly weaker regularity conditions, such as the Polyak-Łojasiewicz (PL) condition or local convexity of the LL subproblems with respect to the $y$-variable. For example, \cite{liu2021towards} proposes a double-loop algorithm that differentiates the multiple-step gradient descent steps of the LL subproblem. \cite{arbel2022non} proposes an implicit differentiation approach that solves the bilevel optimization problem \eqref{Prob_BLO} by differentiating the solution map of its LL subproblem.

While general nonconvex-strongly-convex bilevel optimization has been extensively studied in the literature, the optimization approaches for nonconvex-nonconvex bilevel optimization remain limited. Some existing works employ double-loop schemes to solve general nonconvex-nonconvex bilevel optimization problems, with certain regularity conditions. For example, \cite[Assumption 3.1(5)]{liu2021towards} assumes that all the stationary \revise{points of $f$ lie} in $\M$. Moreover, \cite{liu2022bome,shen2023penalty,lu2024slm,xiao2024alternating} impose the PL condition for the LL subproblems.  As demonstrated in \cite{henrion2011calmness,bolte2024geometric}, these regularity conditions are difficult to verify a \revise{priori} and frequently fail to be satisfied in common bilevel optimization settings.

Furthermore, for solving general nonconvex-nonconvex bilevel optimization problems, the value-function approach has been developed to reformulate the bilevel optimization problem into another corresponding single-level constrained optimization problem \cite{liu2021value,ye2023difference,liu2024moreau,lu2024first}. Specifically, when applied to the bilevel optimization problem \eqref{Prob_BLO}, the formulation can be transformed into the following constrained optimization problem  \cite{liu2024moreau},
\begin{equation}
    \label{Prob_Con_value}
    \begin{aligned}
        \min_{x \in \Rn, ~y \in \Rp} \quad & f(x, y), \\
        \text{s. t.} \quad & g(x, y) \leq v_{\gamma}(x,y).  
    \end{aligned}
\end{equation}
Here $v_{\gamma}(x,y) := \min_{z \in \Rp} g(x, z) + \frac{1}{2\gamma} \norm{z-y}^2$. Then \cite{liu2024moreau} considers the following penalty function for \eqref{Prob_Con_value}, 
\begin{equation}
    \label{Prob_Con_value_pen}
    \chi_{\beta}(x, y) := f(x, y) + \beta (g(x, y) - v_{\gamma}(x,y)).
\end{equation}
Reformulating \eqref{Prob_BLO} into \eqref{Prob_Con_value_pen} avoids expensive Hessian-vector and Jacobian-vector products, making these approaches computationally efficient. However, it is important to note that the constraint \revise{$g(x, y) - v_{\gamma}(x, y) = 0$} does not necessarily imply $\nabla_y g(x, y) = 0$. Consequently, $\chi_{\beta}(x, y)$ may introduce infeasible stationary points \revise{into} the constrained optimization problem \eqref{Prob_Con}, and hence to the bilevel optimization problem \eqref{Prob_BLO}. As a result, the gradient-based method developed by these value function approaches (e.g., \cite[Algorithm 2]{liu2024moreau}) cannot guarantee the feasibility of the obtained solutions without any additional regularity condition. 

Moreover, the theoretical analysis in value function approaches \cite{liu2024moreau} relies on estimating the decrease of $\chi_{\beta}(\xk, \yk)$ over the iterates, which is only available for the case where the UL objective function $f$ is weakly convex. This limitation restricts the applicability of these approaches to bilevel optimization problems with Clarke-regular objective functions \cite{clarke1990optimization}. 
For general nonsmooth nonconvex optimization problems, recent works \cite{davis2020stochastic,ruszczynski2020convergence,bianchi2021closed,bolte2021conservative,castera2021inertial,bolte2022long,le2023nonsmooth,xiao2023adam} employ the ordinary differential equation (ODE) approach \cite{benaim2005stochastic,borkar2009stochastic,duchi2018stochastic,davis2020stochastic} to establish convergence properties of subgradient methods in the unconstrained minimization of \emph{path-differentiable} \cite{bolte2021conservative} functions. As demonstrated in \cite{davis2020stochastic,bolte2021conservative,castera2021inertial}, the class of path-differentiable functions encompasses a wide range of real-world applications, including neural networks with nonsmooth activation functions. However, developing subgradient methods for nonsmooth nonconvex constrained optimization is less explored. \cite{xiao2024developing} develops a general framework embedding proximal subgradient methods into Lagrangian-based methods for nonsmooth nonconvex constrained optimization. Their framework, however, relies on the error-bound condition to guarantee the feasibility of the solutions, which is usually absent in \eqref{Prob_Con}. This leads us to the following question:
\begin{quote}
    \it 
    Can we develop efficient subgradient methods for nonsmooth nonconvex bilevel optimization problems under mild regularity conditions, with guaranteed convergence to feasible stationary points? 
\end{quote}

\subsection{Motivations}
\subsubsection{Regularity condition}
In this paper, we focus on the case where $\ca{S}(x)$ is the set of first-order stationary points of the LL subproblem \eqref{Eq_BLO_LL}. Hence, the bilevel optimization problem \eqref{Prob_BLO} is equivalent to the equality-constrained optimization problem \eqref{Prob_Con}.  To ensure well-posedness of \eqref{Prob_Con}, we impose the following regularity condition on \eqref{Prob_Con}: 
\begin{assumpt}
    \label{Assumption_joint_nondegeneracy}
        The matrix $
            J_g(x, y) :=\left[\begin{smallmatrix}
                \nabla_{xy}^2 g(x, y)\\
                \nabla_{yy}^2 g(x, y)
            \end{smallmatrix}
            \right] \in \bb{R}^{(n+p)\times p}$ has full column rank for any $(x, y) \in \M$. 
\end{assumpt}
\revise{Assumption \ref{Assumption_joint_nondegeneracy} requires that the constraint of \eqref{Prob_Con} satisfies the linear-independence constraint qualification (LICQ) \cite{jorge2006numerical}. In practice, this condition is often satisfied when the attained LL stationary point is nondegenerate. In particular, if $\nabla^2_{yy}g(x,y)$ is nonsingular at $(x,y)\in \M$, then $J_g(x,y)$ automatically has full column rank. This covers the cases where the LL subproblems are strongly convex with respect to the $y$-variable. Moreover, the full column rank property may still hold even when $\nabla^2_{yy}g(x,y)$ is singular, provided that the coupling block $\nabla^2_{xy}g(x,y)$ and $\nabla^2_{yy}g(x,y)$ do not share a non-trivial null space. For instance, if $g(x,y)=\phi(y)-\langle By,x\rangle$, then $J_g(x,y)=
\left[\begin{smallmatrix}
                -B^\top, \;
                \nabla^2\phi(y)
            \end{smallmatrix}\right]^\top$ can have full column rank whenever the intersection of the nullspace of $B$ and the nullspace of $\nabla^2\phi(y)$ is $\{0\}$. }

Furthermore, from the Sard-type properties of semi-algebraic mappings \cite{drusvyatskiy2013semi,drusvyatskiy2016generic}, we can conclude that when $g$ is semi-algebraic (or more generally, defined in some $o$-minimal structures \cite{van1996geometric}), Assumption \ref{Assumption_joint_nondegeneracy} holds for generic tilt perturbations to the LL subproblems of \eqref{Prob_BLO}. More precisely, as illustrated in \cite[Theorem 5.2]{drusvyatskiy2016generic}, the matrix $J_g(x, y)$
has full column rank over $\{(x, y) \in \Rn \times \Rp: \nabla_y g(x, y) + v = 0\}$ for almost every $v \in \Rp$. Therefore,  Assumption \ref{Assumption_joint_nondegeneracy} is generally satisfied and thus mild in practical scenarios.

As LICQ holds for \eqref{Prob_Con} under Assumption \ref{Assumption_joint_nondegeneracy}, we can identify $\M$ as an embedded manifold in $\Rn \times \Rp$, as shown in \cite{absil2008optimization,boumal2023intromanifolds}. Although various Riemannian optimization approaches are developed in recent years \cite{absil2008optimization,boumal2023intromanifolds}, computing the geometric structures of the manifold $\M$ is usually computationally expensive. For example, computing the retraction of $\M$ can be regarded as computing a projection from the tangent space to $\M$, which requires the computation of exact first-order stationary points of the LL subproblems of \eqref{Prob_BLO}.  Moreover, computing the Riemannian subgradient of \eqref{Prob_Con} is equivalent to computing the projection to the tangent spaces of $\M$ at $(x, y)$, which requires the computation of the inverse of $\nabla_{yy}^2 g(x, y)$ in each iteration.

To develop efficient optimization approaches while utilizing the fact that $\M$ is an embedded manifold, \cite{hu2022improved} proposes a globally exact penalty function for nonconvex-strongly-convex bilevel optimization problems based on the constrained dissolving approach \cite{xiao2023dissolving}. The authors in \cite{hu2022improved} illustrate that the nonconvex strongly-convex bilevel optimization problem is equivalent to the unconstrained minimization of the proposed penalty function. As a result, various existing unconstrained optimization approaches can be directly employed to solve \eqref{Prob_BLO} through the proposed penalty function. 
However, their analysis relies on a restrictive regularity condition that assumes the strong convexity of $g$ with respect to $y$-variable, hence is not applicable for solving \eqref{Prob_Con} under Assumption \ref{Assumption_joint_nondegeneracy}. Consequently, the design and analysis of optimization methods for \eqref{Prob_Con} that effectively exploit the underlying manifold structures of the feasible region $\M$ under Assumption \ref{Assumption_joint_nondegeneracy} remains an open problem.

\subsubsection{Two-timescale scheme}

To develop an efficient subgradient method for solving \eqref{Prob_Con},
we introduce $p(x, y) :=\frac{1}{2} \norm{\nabla_y g(x, y)}^2$ as a quadratic penalty term for the constraint in \eqref{Prob_Con} and consider the following two-timescale momentum-accelerated subgradient method (TMG), 
\begin{equation}
    \label{Eq_Subroutine_manifold}
    \tag{TMG}
    \left\{
    \begin{aligned}
        &(d_{x, k}, d_{y, k}) \in \D_f(\xk, \yk), \quad(u_{x, k}, u_{y, k}) = \nabla p(\xk, \yk),
        \\
        &(m_{x, k+1}, m_{y, k+1}) = \alpha (m_{x, k}, m_{y, k}) + (1-\alpha )(d_{x, k}, d_{y, k}),\\
        &\xkp = \xk - \eta_k m_{x, k+1} -  \theta_k (u_{x,k}+ e_{x, k}),\\
        &\ykp = \yk - \eta_k m_{y, k+1} -  \theta_k (u_{y,k} + e_{y, k}).
    \end{aligned}
    \right.
\end{equation}
Here the update scheme \eqref{Eq_Subroutine_manifold} takes two-timescale stepsizes, in the sense that $\lim_{k\to +\infty} \theta_k = 0$ and $\lim_{k\to +\infty} \frac{\eta_k}{\theta_k} = 0$.
Moreover, $\D_f$ refers to the conservative field of the UL objective function $f$ \cite{bolte2021conservative}, which generalizes the Clarke subdifferential  \cite{clarke1990optimization}. Additionally, the sequence $\{(m_{x, k}, m_{y, k})\}$ corresponds to the heavy-ball momentum terms \revise{and we initialize $(m_{x, 0}, m_{y, 0}) = (d_{x, 0}, d_{y, 0})$}, while the momentum parameter $\alpha \in [0, 1)$ is a \revise{fixed} constant. 
The sequences $\{e_{x, k}\}$ and $\{e_{y, k}\}$ denote the errors in the evaluation of $\nabla p(\xk, \yk)$. As shown later in Remark \ref{Rmk_interpolation_nabla_p}, 
these error terms allow for approximate gradient evaluations using two-point interpolation, avoiding the need for explicit computation of Hessians of $g$. Consequently, \eqref{Eq_Subroutine_manifold} only requires first-order information of $f$ and $g$, making it computationally efficient for large-scale problems.

Analyzing the convergence of \eqref{Eq_Subroutine_manifold} presents significant challenges due to the employed two-timescale stepsizes. Existing ODE approaches \cite{benaim2005stochastic,borkar2009stochastic,duchi2018stochastic,davis2020stochastic} are developed for subgradient methods that take single-timescale stepsizes, and there are limited results on subgradient methods with two-timescale stepsizes. 
To the best of our knowledge, there is no existing framework discussing the convergence properties of  \eqref{Eq_Subroutine_manifold} under nonsmooth settings.

\subsection{Contributions}
In this paper, we focus on developing an efficient subgradient method for \eqref{Prob_BLO} with global convergence guarantees. 
To overcome the challenges of establishing convergence properties of two-timescale stepsizes under Assumption \ref{Assumption_joint_nondegeneracy}, we introduce the following auxiliary mapping \revise{$\A:\bb{R}^{n+p}\to \bb{R}^{n+p}$},
\begin{equation}
    \label{Eq_defin_cdmapping}
    \A(x, y) = (x, y) - (J_g(x, y)^{\dagger})\tp \nabla_y g(x, y),
\end{equation}
where $J_g(x, y)^{\dagger} = (J_g(x, y)\tp J_g(x, y))^{-1} J_g(x, y)\tp$ denotes the pseudo-inverse of $J_g(x, y)$ \cite{golub2013matrix}. Based on  $\A$, we construct the auxiliary sequence ${(\wk, \zk)}:= {\A(\xk, \yk)}$ and show that it shares the same cluster points as the original sequence ${(\xk, \yk)}$.

More importantly, we demonstrate that the auxiliary sequence ${(\wk, \zk)}$ follows a single-timescale update scheme, which can be interpreted as an inexact subgradient descent method for the auxiliary function
\begin{equation}
    \label{Eq_defin_cdf}
    h_{\beta}(x, y) := f(\A(x, y)) + \frac{\beta}{2} \norm{\nabla_y g(x, y)}^2,
\end{equation}
where $\beta > 0$ is a penalty parameter. We establish the equivalence between \eqref{Prob_Con} and \eqref{Eq_defin_cdf} in the aspect of their first-order stationary points, and show the local convergence properties of \eqref{Eq_Subroutine_manifold}. Specifically, we prove that for  any initial point $(x_0, y_0)$ sufficiently close to $\M$, and under appropriately selected stepsize sequences $\{\eta_k\}$ and $\{\theta_k\}$, every cluster point of the iterates $\{(\xk, \yk)\}$ generated by the update scheme \eqref{Eq_Subroutine_manifold} is a first-order stationary point of \eqref{Prob_Con}.

Furthermore, to design a globally convergent subgradient method, we consider the following feasibility restoration scheme (FRG):
\begin{equation}
    \label{Eq_Subroutine_gy}
    \tag{FRG}
    \left\{
    \begin{aligned}
        &(d_{x, k}, d_{y, k}) \in \D_f(\xk, \yk),  
        \\
        &(m_{x, k+1}, m_{y, k+1}) = \alpha (m_{x, k}, m_{y, k}) + (1-\alpha )(d_{x, k}, d_{y, k}),\\
        &\xkp = \xk - \eta_k m_{x, k+1},\\
        &\ykp = \yk -\eta_k m_{y, k+1} - \theta_k \nabla_y g (\xk, \yk). 
    \end{aligned}
    \right.
\end{equation}
Here $\{\eta_k\}$ and $\{\theta_k\}$ are two-timescale stepsizes, \revise{and we initialize $(m_{x, 0}, m_{y, 0}) = (d_{x, 0}, d_{y, 0})$}. Moreover,  \eqref{Eq_Subroutine_gy} only requires the computation of the first-order information of both $f$ and $g$, hence avoiding the expensive computations of the second-order derivatives of $g$. Under mild conditions, we prove that the sequence $\{(\xk, \yk)\}$ satisfies $\liminf_{k\to +\infty} \norm{\nabla_y g(\xk, \yk)} = 0$. This illustrates that the update scheme \eqref{Eq_Subroutine_gy} drives the sequence $\{(\xk, \yk)\}$ towards the feasible region $\M$.

\revise{Building on the update schemes \eqref{Eq_Subroutine_manifold} and \eqref{Eq_Subroutine_gy}, we propose the Two-phase Hybrid Subgradient Descent (TPHSD) method in Algorithm \ref{Alg_TPHSD}, which adaptively alternates between optimization and feasibility restoration based on the distance between the iterates and the feasible region $\mathcal{M}$. When the distance is no greater than the tolerance parameter, Algorithm \ref{Alg_TPHSD} employs the momentum-accelerated scheme \eqref{Eq_Subroutine_manifold} to drive near-feasible iterates toward a first-order stationary point. Conversely, when the distance from the current iterate to the feasible region $\M$ exceeds the tolerance parameter, Algorithm \ref{Alg_TPHSD} employs the feasibility restoration scheme \eqref{Eq_Subroutine_gy} to drive the iterate close to $\M$. The transition from the momentum-accelerated scheme \eqref{Eq_Subroutine_manifold} to the feasibility restoration scheme \eqref{Eq_Subroutine_gy} triggers an adaptive reduction of the tolerance parameter, which prevents oscillation and ensures global convergence of our proposed algorithm.}


Under mild conditions, we establish the global convergence of Algorithm \ref{Alg_TPHSD}. Specifically, we prove that for any initial point, \revise{every cluster point of the iterates} $\{(\xk, \yk)\}$ generated by Algorithm \ref{Alg_TPHSD} \revise{is} a first-order stationary point of \eqref{Prob_Con}. We also conduct preliminary numerical experiments to verify the effectiveness of our proposed algorithm.

\subsection{Organization}
The outline of the rest of this paper is as follows. In Section 2, we present the notations and preliminary concepts that are necessary for the proofs in this paper. We present the details of our proposed Algorithm \ref{Alg_TPHSD} and establish its global convergence properties in Section 3. In Section 4, we perform illustrative numerical examples to show the efficiency of Algorithm \ref{Alg_TPHSD}. We conclude the paper in the last section.

\section{Preliminary}

\subsection{Basic notations}
\label{Subsection_basic_notation}
Let $\inner{\cdot, \cdot}$ be the standard inner product and $\norm{\cdot}$ be the $\ell_2$-norm of a vector or an operator
in a finite-dimensional Euclidean space. 
We use $\bb{B}_{\delta}(x,y):= \{ (\tilde{x}, \tilde{y}) \in \Rn \times \Rp: \norm{\tilde{x} - x}^2 + \norm{\tilde{y} - y}^2 \leq \delta^2 \}$ 
to denote the ball
centered at $(x,y)$ with radius $\delta$. Moreover, for a given set $\X$ and point $x$, $\mathrm{dist}(x, \ca{X})$ denotes the distance between $x$ and  $\ca{X}$, i.e. \revise{$\mathrm{dist}(x, \ca{X}) := \mathop{\inf}_{y \in \ca{X}} ~\norm{x-y}$}.

For any differentiable function $g: \mathbb{R}^n \times \mathbb{R}^p \to \mathbb{R}$,  $\nabla_x g(x, y)$ and $\nabla_y g(x, y)$ denote the partial derivatives of $g$ with respect to $x$ and $y$, respectively. Furthermore, we define $\nabla_{xy}^2 g(x, y)$ and $\nabla_{yy}^2 g(x, y)$ as the partial Jacobians of $\nabla_y g(x, y)$ with respect to the $x$- and $y$-variables, respectively. More precisely, 
\begin{footnotesize}
\begin{equation*}
	\nabla_{xy}^2 g(x, y) := \left[  \begin{matrix}
		\frac{\partial^2 g(x, y) }{\partial x_1\partial y_1} & \cdots & \frac{\partial^2 g(x, y) }{\partial x_1\partial y_p} \\
		\vdots & \ddots & \vdots \\
		\frac{\partial^2 g(x, y) }{\partial x_n\partial y_1} & \cdots & \frac{\partial^2 g(x, y) }{\partial x_n\partial y_p} \\
	\end{matrix}\right], 
	\quad 
	\nabla_{yy}^2 g(x, y) := \left[  \begin{matrix}
		\frac{\partial^2 g(x, y) }{\partial y_1\partial y_1} & \cdots & \frac{\partial^2 g(x, y) }{\partial y_1\partial y_p} \\
		\vdots & \ddots & \vdots \\
		\frac{\partial^2 g(x, y) }{\partial y_p\partial y_1} & \cdots & \frac{\partial^2 g(x, y) }{\partial y_p\partial y_p} \\
	\end{matrix}\right]. 
\end{equation*} 
\end{footnotesize}
Additionally, $\nabla_{yx}^2 g(x, y)$ is defined as the transpose of $\nabla_{xy}^2 g(x, y)$, i.e., $\nabla_{yx}^2 g(x, y) = \left(\nabla_{xy}^2 g(x, y)\right)\tp$. 

Furthermore, a set-valued mapping $\ca{D}: \bb{R}^m \rightrightarrows \bb{R}^s$ is a mapping from $\bb{R}^m$ to a collection of subsets of $\bb{R}^s$. $\D$ is said to be graph-closed if its graph, defined as
\begin{equation*}
    \mathrm{graph}(\D) := \left\{ (w,z) \in \bb{R}^m \times \bb{R}^s: w \in \bb{R}^m, z \in \D(w) \right\},
\end{equation*}
is a closed subset of $\bb{R}^m\times \bb{R}^s$.
For any $\delta \geq 0$, we denote $\ca{D}^{\delta}:\bb{R}^m \rightrightarrows \bb{R}^s$ as
\begin{equation*}
    \D^{\delta}(z) = \bigcup_{w \in \bb{B}_{\delta}(z)} (\D(w) + \bb{B}_{\delta}(0)). 
\end{equation*}

An absolutely continuous curve is a continuous mapping $\gamma: \bb{R} \to \Rn\times \Rp $, such that its derivative $\gamma'$ exists almost everywhere in $\bb{R}$. Moreover, the difference $\gamma(t) - \gamma(0)$ equals the Lebesgue integral of $\gamma'$ between $0$ and $t$ for all $t \in \bb{R}_+$, i.e., $\gamma(t) = \gamma(0) + \int_{0}^t \gamma'(\tau) \mathrm{d} \tau$ for any  $t \in \bb{R}_+$. 

For any positive sequence $\{\eta_k\}$, we define 
$\lambda(0) := 0$, $\lambda(i) := \sum_{k = 0}^{i-1} \eta_k$ for $i\geq 1$, and $\Lambda(t) := \sup  \{k \geq 0: t\geq \lambda(k)\} $. More explicitly, $\Lambda(t) = p$ if $\lambda(p) \leq t < \lambda(p+1)$ for any $p \geq 0$. In particular, $\Lambda(\lambda(p)) = p$. 
Similarly, for any positive sequence $\{\theta_k\}$, we denote $\lambda_{\theta}(0) := 0$, $\lambda_{\theta}(i) := \sum_{k = 0}^{i-1} \theta_k$ for $i\geq 1$, and $\Lambda_{\theta}(t) := \sup  \{k \geq 0: t\geq \lambda_{\theta}(k)\} $.

\subsection{Conservative field and path-differentiable function}
In this subsection, we introduce the concept and fundamental properties of conservative fields, which extend Clarke subdifferential to encompass a broader class of nonsmooth, nonconvex functions while maintaining desirable variational properties. For simplicity, we provide a self-contained description and highlight some essential ingredients for our theoretical analysis. Interested readers may refer to several recent papers \cite{bolte2021conservative,castera2021inertial} for more details.

We first present the definitions related to the concept of Clarke subdifferential based on the contents in \cite[Section 2]{clarke1990optimization}. 
\begin{defin}
	\label{Defin_Subdifferential}
	For any given locally Lipschitz continuous function $f: \bb{R}^q \to \bb{R}$ and any $x \in \bb{R}^q$, the Clarke subdifferential of $f$ at $x$, denoted by $\partial f(x)$, is defined as \revise{
	\begin{equation*}
		\begin{aligned}
			\partial f(x) := &\conv\left(\left\{ w \in \bb{R}^q : \exists \{x_k\} \to x, \text{ with }  \{x_k\}\subseteq R_{f},  \{\nabla f(x_k)\} \to w \right\} \right).
		\end{aligned}
	\end{equation*}}
    Here $R_{f} = \{x \in \bb{R}^q: \text{$f$ is differentiable at $x$}\}$. 
\end{defin}

It is worth mentioning that for any locally Lipschitz continuous function $f: \Rn\times \Rp \to \bb{R}$, its Clarke subdifferential is compact and convex for any $(x, y) \in \Rn \times \Rp$. Moreover, the mapping $(x,y) \mapsto \partial f(x, y)$ is graph-closed over $\Rn \times \Rp$ \cite{clarke1990optimization}.

Now we can present the definition of a set-valued conservative field. 
\begin{defin}
	\label{Defin_conservative_field}
	Let $\ca{D}: \bb{R}^q \rightrightarrows \bb{R}^q$ be a set-valued graph-closed, nonempty-valued, locally bounded mapping, and $f: \bb{R}^q \to \bb{R}$ a locally Lipschitz continuous function.
    Then $f$ is a potential function for $\D$ if for any $x \in \bb{R}^q$, any absolutely continuous trajectory $\gamma: [0,1]\to \bb{R}^q$ with $\gamma(0) = 0$ and $\gamma(1) = x$, and any measurable functions $v: [0,1]\to \bb{R}^q$ such that $v(t) \in \D(\gamma(t))$ holds for almost every $t \in [0, 1]$, we have
    \begin{equation}
        \label{Eq_Defin_Conservative_mappping}
        f(x) = f(0) + \int_{0}^1 \inner{\gamma'(t), v(t)} \mathrm{d}t. 
    \end{equation}
    Additionally, we say $\D$ is a conservative field for $f$, and such an $f$ is called path-differentiable. 
\end{defin}

It is worth mentioning that any conservative field defines a unique potential function up to a constant, since the value of the integral does not depend on the selection of the path in \eqref{Eq_Defin_Conservative_mappping}.  Moreover, for any $f$ that is a potential function for some conservative field $\ca{D}$, $\partial f$ is a conservative field that admits $f$ as its potential function, and $\partial f(x, y) \subseteq \mathrm{conv}(\ca{D}(x, y))$ holds for any $(x, y) \in \Rn\times \Rp$ \cite[Corollary 1]{bolte2021conservative}. 

Next, we present the definition of the optimality condition for \eqref{Prob_BLO}, which also appears in \cite{hu2022improved,xiao2024developing}.
\begin{defin}
    \label{Defin_stationary_point}
    For any given path-differentiable UL objective function $f$ and its corresponding conservative field $\D_f$, we say $(x, y) \in \M$ is a first-order stationary point of \eqref{Prob_BLO} if there exists $\lambda \in \Rp$ such that $0 \in \D_f(x, y) + J_g(x,y)\lambda$. 
\end{defin}

Finally, we present the following auxiliary lemma whose proof
directly follows from the graph-closedness and local boundedness of $\D$, hence we omit it for simplicity. 
\begin{lem}
        \label{Le_approximate_evaluation}
        Given any convex-valued graph-closed set-valued mapping $\D$, any diminishing positive sequence $\{\delta_k\}$,  any uniformly bounded points $\{\xk\}$, and any sequence $\{d_k\}$ that satisfies $d_k \in \conv\left( \D^{\delta_k}(\xk) \right)$, we have that 
        for any constant $\varepsilon>0$, there exists $K_\varepsilon>0$ such that $d_k \in \D^\varepsilon(\xk)$ holds for any $k\geq K_\varepsilon$. Moreover, there exists a diminishing positive sequence $\{\tilde{\delta}_k\}$ such that $d_k \in \D^{\tilde{\delta}_k}(\xk)$ holds for any $k\geq 0$.
    \end{lem}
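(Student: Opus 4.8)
The plan is to prove the two assertions separately, with the first (eventual containment in $\D^\varepsilon(\xk)$) doing the real work and the second following by routine bookkeeping. For the first assertion I would argue by contradiction: suppose there is some $\varepsilon > 0$ and a subsequence, still indexed by $k$ for brevity, along which $d_k \in \conv(\D^{\delta_k}(\xk))$ yet $d_k \notin \D^\varepsilon(\xk)$. The obstacle here is precisely the convex hull, since the inflated set $\D^{\delta_k}(\xk)$ is a union over base points $w \in \bb{B}_{\delta_k}(\xk)$ and is generally nonconvex, so one cannot directly compare $\conv(\D^{\delta_k}(\xk))$ with $\D^\varepsilon(\xk)$. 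To control it, I would invoke Carathéodory's theorem to write each $d_k = \sum_{i=1}^{s+1} \mu_k^i a_k^i$ as a convex combination of at most $s+1$ elements $a_k^i \in \D^{\delta_k}(\xk)$, and further decompose $a_k^i = b_k^i + c_k^i$ with $b_k^i \in \D(w_k^i)$, $w_k^i \in \bb{B}_{\delta_k}(\xk)$, and $\norm{c_k^i} \leq \delta_k$.

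The next step is a compactness argument. Since $\{\xk\}$ is uniformly bounded and $\delta_k \to 0$, the base points $\{w_k^i\}$ are uniformly bounded; local boundedness of $\D$ then makes $\{b_k^i\}$ uniformly bounded, and hence $\{a_k^i\}$ and $\{d_k\}$ as well. Passing to a further subsequence, I would arrange $\xk \to x^*$, $w_k^i \to x^*$ (using $\norm{w_k^i - \xk} \leq \delta_k \to 0$), $c_k^i \to 0$, $\mu_k^i \to \mu^{*,i}$, and $b_k^i \to b^{*,i}$. Graph-closedness of $\D$ forces $b^{*,i} \in \D(x^*)$, and convex-valuedness of $\D$ yields $d^* := \lim_k d_k = \sum_i \mu^{*,i} b^{*,i} \in \conv(\D(x^*)) = \D(x^*)$. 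Finally, for all large $k$ one has $\norm{x^* - \xk} < \varepsilon$ and $\norm{d_k - d^*} < \varepsilon$; writing $d_k = d^* + (d_k - d^*)$ with $d^* \in \D(x^*)$ and $x^* \in \bb{B}_\varepsilon(\xk)$ shows $d_k \in \D(x^*) + \bb{B}_\varepsilon(0) \subseteq \D^\varepsilon(\xk)$, contradicting the standing assumption.

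For the second assertion, I would apply the first one with $\varepsilon = 1/m$ to obtain increasing thresholds $K_1 < K_2 < \cdots$ such that $d_k \in \D^{1/m}(\xk)$ for all $k \geq K_m$, where I also use the monotonicity $\D^{\delta_1}(\cdot) \subseteq \D^{\delta_2}(\cdot)$ whenever $\delta_1 \leq \delta_2$. I would then set $\tilde{\delta}_k := 1/m$ on each block $K_m \leq k < K_{m+1}$, which is admissible since $d_k \in \D^{1/m}(\xk)$ there, and which diminishes to $0$ as $k \to \infty$. For the finitely many indices $k < K_1$, nonemptiness of $\D(\xk)$ lets me fix some $b_k \in \D(\xk)$ and take $\tilde{\delta}_k \geq \norm{d_k - b_k}$, so that $d_k \in \D^{\tilde{\delta}_k}(\xk)$; the maximum over this finite set is finite, so the resulting sequence is well defined and still diminishing. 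The main difficulty of the lemma lies entirely in the compactness-plus-Carathéodory step for the first assertion; once the limit point is pinned inside $\D(x^*)$ via graph-closedness and convex-valuedness, everything else is bookkeeping.
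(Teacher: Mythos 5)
Your proof is correct and matches the argument the paper intends: the authors omit the proof, stating only that it ``directly follows from the graph-closedness and local boundedness of $\D$,'' and your Carath\'eodory decomposition plus compactness, graph-closedness, and convex-valuedness is exactly the fleshed-out version of that sketch. The only caveat is that you (like the paper) implicitly use local boundedness and nonemptiness of $\D$, which are not listed in the lemma's hypotheses but hold for the conservative fields to which the lemma is applied; your handling of the finitely many indices $k < K_1$ also tacitly assumes $\D(\xk) \neq \emptyset$, which is harmless here.
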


\subsection{Differential inclusion}
In this subsection, we introduce some fundamental concepts related to the differential inclusion, which is essential for establishing the convergence properties of stochastic subgradient methods, as discussed in \cite{benaim2005stochastic,davis2020stochastic,bolte2021conservative,josz2023lyapunov}.   
\begin{defin}
\label{Defin_DI}
	For any locally bounded graph closed set-valued mapping $\ca{D}: \Rn \times \Rp \rightrightarrows \Rn \times \Rp$ that is nonempty compact convex-valued,  we say that the absolutely continuous mapping $\gamma:\bb{R}_+ \to \Rn \times \Rp$ is a solution for the differential inclusion 
	\begin{equation}
		\label{Eq_def_DI}
		\left(\frac{\mathrm{d} x}{\mathrm{d}t}, \frac{\mathrm{d} y}{\mathrm{d}t}\right) \in \ca{D}(x, y),
	\end{equation}
	with initial point $(x_0, y_0)$, if $\gamma(0) = (x_0, y_0)$ and $\dot{\gamma}(t) \in \ca{D}(x(t), y(t))$ holds for almost every $t\geq 0$. 
\end{defin}

\begin{defin}[Definition II in \cite{benaim2005stochastic}]
	\label{Defin_perturbed_solution}
	We say that an absolutely continuous mapping $\gamma$
	is a perturbed solution to \eqref{Eq_def_DI}  if there exists a locally integrable function $u: \bb{R}_+ \to \Rn \times \Rp$, such that 
	\begin{enumerate}
		\item For any $T>0$, it holds that $\lim\limits_{t \to +\infty} \sup\limits_{0\leq l\leq T} \norm{\int_{t}^{t+l} u(s) ~\mathrm{d}s} = 0$. 
		\item There exists $\delta: \bb{R}_+ \to \bb{R}$ such that $\lim\limits_{t \to +\infty} \delta(t) = 0$ and $\dot{\gamma}(t) - u(t) \in \D^{\delta(t)}(\gamma(t))$. 
	\end{enumerate}
\end{defin}

Now consider the sequence $\{(\xk, \yk)\}$ generated by the  following updating scheme,  
\begin{equation}
	\label{Eq_def_Iter}
	(\xkp, \ykp) = (\xk, \yk) - \eta_k(d_{x, k} + \xi_{x, k+1}, d_{y, k} + \xi_{y, k+1}),
\end{equation}
where $\{\eta_k\}$ is a diminishing sequence of positive real numbers. 
We define the interpolated process of \revise{$\{(\xk,\yk)\}$} generated by \eqref{Eq_def_Iter} as follows. 
\begin{defin}
	The  (continuous-time) interpolated process of \revise{$\{(\xk,\yk)\}$} generated by \eqref{Eq_def_Iter} is the mapping $w: \bb{R}_+ \to \Rn \times \Rp$ such that 
	\begin{equation} \label{eq-interpolated}
		w(\lambda(i) + s) := (x_{i},\, y_{i}) + \frac{s}{\eta_i} 
        (x_{i+1} - x_{i},\, y_{i+1} - y_{i}), \quad s\in[0, \eta_i), \;\;
        i\geq 0.
	\end{equation}
\end{defin}

Then we introduce the following lemma from \cite[Lemma 4]{xiao2023adam}, which shows that the interpolated process of \revise{$\{(\xk,\yk)\}$} from \eqref{Eq_def_Iter} is a perturbed solution of the differential inclusion \eqref{Eq_def_DI}.   
\begin{lem}
	\label{Le_interpolated_process}
	Let $\ca{D}: \Rn \times \Rp \rightrightarrows \Rn \times \Rp$ be a locally bounded graph-closed set-valued mapping that is nonempty, compact, and convex-valued.
	Suppose the following conditions hold in \eqref{Eq_def_Iter}:
	\begin{enumerate}
		\item $\sup_{k \geq 0} \norm{\xk} + \norm{\yk}<+\infty$, $\sup_{k \geq 0} \norm{d_{x, k}} + \norm{d_{y, k}} < +\infty$. 
		\item There exists a nonnegative sequence $\{\delta_k\}$  such that $\lim_{k\to +\infty} \delta_k = 0$ and $(d_{x, k}, d_{y, k}) \in \D^{\delta_k}(\xk, \yk)$.
		\item For any $T> 0$, we have $\lim\limits_{s \to +\infty} \sup\limits_{s\leq i \leq \Lambda(\lambda(s) + T)}\norm{ \sum_{k = s}^{i} \eta_k (\xi_{x, k+1}, \xi_{y, k+1})} =0$. 
	\end{enumerate}
	Then the interpolated process \eqref{eq-interpolated} of $\{(\xk, \yk)\}$ is a perturbed solution for \eqref{Eq_def_DI}. 
\end{lem}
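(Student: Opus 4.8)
The plan is to verify directly the two defining properties in Definition~\ref{Defin_perturbed_solution}, taking as the perturbation $u$ the piecewise-constant interpolation of the error terms $\{(\xi_{x,k+1},\xi_{y,k+1})\}$. First I would record the elementary structure of $w$: on each interval $[\lambda(i),\lambda(i+1))$ it is affine, and the pieces agree at the grid points $\lambda(i)$, so $w$ is continuous and locally Lipschitz, hence absolutely continuous on every bounded subinterval of $\bb{R}_+$ (here I use, as is implicit for two-timescale stepsizes, that $\sum_k \eta_k = +\infty$, so that $\lambda(i)\to+\infty$ and $w$ is defined on all of $\bb{R}_+$). On the interior of $[\lambda(i),\lambda(i+1))$ its derivative equals the constant $\eta_i^{-1}(x_{i+1}-x_i,\,y_{i+1}-y_i)$, which by \eqref{Eq_def_Iter} is exactly $-(d_{x,i}+\xi_{x,i+1},\,d_{y,i}+\xi_{y,i+1})$. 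Setting $u(\lambda(i)+s):=-(\xi_{x,i+1},\xi_{y,i+1})$ for $s\in[0,\eta_i)$, the residual $\dot w(t)-u(t)$ reduces to the drift $-(d_{x,i},d_{y,i})$ at the node $i=\Lambda(t)$.

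For the first property (vanishing averaged perturbation) I would fix $T>0$ and estimate $\int_t^{t+l}u(\tau)\,\mathrm{d}\tau$ uniformly over $l\in[0,T]$. Writing $i_0=\Lambda(t)$ and $i_1=\Lambda(t+l)$ and splitting the integral at the grid points $\lambda(i_0+1),\dots,\lambda(i_1)$, one obtains the full-interval sum $-\sum_{k=i_0+1}^{i_1-1}\eta_k(\xi_{x,k+1},\xi_{y,k+1})$ together with two boundary fragments on $[t,\lambda(i_0+1))$ and $[\lambda(i_1),t+l)$. Since $\lambda(i_1)\le t+l\le \lambda(i_0+1)+T$, the index $i_1-1$ lies within the window $\{i_0+1,\dots,\Lambda(\lambda(i_0+1)+T)\}$, so the full-interval sum is exactly of the type controlled by hypothesis~(3) with $s=i_0+1$ and tends to $0$ as $t\to+\infty$ (hence $i_0\to+\infty$), uniformly in $l$. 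Each boundary fragment has norm at most $\eta_i\norm{(\xi_{x,i+1},\xi_{y,i+1})}$, which is itself a single-term window sum in~(3) and therefore vanishes. Combining these estimates yields property~1.

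For the second property I would quantify how far the interpolated point $w(t)$ sits from the node $(x_i,y_i)$: from \eqref{eq-interpolated} and \eqref{Eq_def_Iter}, $\norm{w(t)-(x_i,y_i)}\le \eta_i\big(\norm{(d_{x,i},d_{y,i})}+\norm{(\xi_{x,i+1},\xi_{y,i+1})}\big)=:\rho_i$, which tends to $0$ because $\eta_i\to0$, the sequence $\{(d_{x,i},d_{y,i})\}$ is bounded by hypothesis~(1), and $\eta_i\norm{(\xi_{x,i+1},\xi_{y,i+1})}\to0$ as established above. Since $(d_{x,i},d_{y,i})\in\D^{\delta_i}(x_i,y_i)$ by hypothesis~(2), unwinding the definition $\D^{\delta_i}(z)=\bigcup_{w'\in\bb{B}_{\delta_i}(z)}(\D(w')+\bb{B}_{\delta_i}(0))$ and enlarging the centering radius by $\rho_i$ via the triangle inequality gives $(d_{x,i},d_{y,i})\in\D^{\delta_i+\rho_i}(w(t))$, whence $\dot w(t)-u(t)\in\D^{\delta(t)}(w(t))$ in accordance with the sign convention of \eqref{Eq_def_DI}. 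I would then set $\delta(t):=\delta_{\Lambda(t)}+\rho_{\Lambda(t)}$, which tends to $0$ since $\Lambda(t)\to+\infty$, $\delta_i\to0$, and $\rho_i\to0$; this establishes property~2 and completes the verification.

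The main obstacle is the bookkeeping in the first property: translating the \emph{discrete} window-sum control of hypothesis~(3) into a \emph{continuous-time} bound on $\int_t^{t+l}u$ holding uniformly over all $l\in[0,T]$ and all large $t$, while correctly absorbing the two partial intervals at the endpoints into single-term window sums. The transfer step in the second property---moving set membership from the grid node $(x_i,y_i)$ to the interpolated point $w(t)$ through the $\rho_i$-enlargement of $\D^{\delta_i}$---is the other delicate point, but it becomes routine once the displacement bound $\rho_i\to0$ is in hand.
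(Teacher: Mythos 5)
Your proposal is a direct verification of Definition~\ref{Defin_perturbed_solution}, whereas the paper does not prove this lemma at all --- it imports it verbatim as Lemma~4 of the cited Adam paper \cite{xiao2023adam}. Your argument is the standard one and, as far as I can check, correct and complete: the piecewise-constant choice of $u$, the splitting of $\int_t^{t+l}u$ into full-interval window sums (controlled by hypothesis~(3) with $s=i_0+1$, using $\lambda(i_1)\le t+l<\lambda(i_0+1)+T$ to place $i_1$ inside the window) plus two boundary fragments (each a single-term window sum, so $\eta_k\norm{\xi_{k+1}}\to 0$), and the transfer of the membership $(d_{x,i},d_{y,i})\in\D^{\delta_i}(x_i,y_i)$ to $\D^{\delta_i+\rho_i}(w(t))$ via the displacement bound $\rho_i=\eta_i(\norm{d_i}+\norm{\xi_{i+1}})\to 0$ are all sound, and you correctly use the standing assumption that $\{\eta_k\}$ is diminishing with $\sum_k\eta_k=+\infty$ so that $w$ is defined and locally Lipschitz on all of $\bb{R}_+$. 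The one point worth flagging is the sign: with the minus sign in \eqref{Eq_def_Iter} and your $u(t)=-(\xi_{x,i+1},\xi_{y,i+1})$, the residual is $\dot w(t)-u(t)=-(d_{x,i},d_{y,i})$, so what you literally obtain is membership in $-\D^{\delta(t)}(w(t))$, i.e.\ $w$ is a perturbed solution of $(\dot x,\dot y)\in-\ca{D}(x,y)$ rather than of \eqref{Eq_def_DI} as printed. This discrepancy is inherited from the paper's own statement (the lemma is later applied to the descent inclusion $\in-\D_{h_\beta}$ while hypothesis~(2) is checked for $+\D_{h_\beta}$), and you acknowledge it as a convention; it would be cleaner to state explicitly that the conclusion holds for the inclusion driven by $-\ca{D}$, or equivalently to replace $\ca{D}$ by $-\ca{D}$ in hypothesis~(2). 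With that caveat made explicit, the proof stands.
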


\section{Global Convergence of Hybrid Method}

In this section, we develop an efficient subgradient method for \eqref{Prob_BLO} under Assumption \ref{Assumption_joint_nondegeneracy}. Section \ref{Subsection_basic_assumption} introduces basic assumptions, describes the details of our proposed hybrid method, and presents the main result on its convergence properties. Moreover, the proofs for the main theorem are presented in Section \ref{Subsection_proof_main_results}.

\subsection{Basic assumptions, the hybrid method, and the main result}
\label{Subsection_basic_assumption}

We begin by presenting the basic assumptions for the bilevel optimization problem \eqref{Prob_BLO}.
\begin{assumpt}
    \label{Assumption_f}
    \begin{enumerate}
        \item The objective function $f$ is path-differentiable and admits a  convex-valued conservative field $\D_f$.
        \item The critical values of \eqref{Prob_Con} have empty interior in $\bb{R}$, in the sense that \revise{$\{f(x, y): \nabla_y g(x, y) = 0, ~\exists \lambda \in \Rp \text{ such that }0 \in \D_f(x, y) + J_g(x, y) \lambda \}$} has empty interior in $\bb{R}$. 
    \end{enumerate}
\end{assumpt}
As demonstrated in \cite{bolte2021conservative}, the class of path-differentiable functions is general enough to enclose a wide range of nonsmooth nonconvex functions. As a result, Assumption \ref{Assumption_f}(1) is a mild condition. Moreover, by applying the nonsmooth Morse-Sard property to \eqref{Prob_Con}, we can conclude that when both $f$ and $g$ are semi-algebraic and $\D_f = \partial f$, the set \revise{$\{f(x, y): \nabla_y g(x, y) = 0, ~\exists \lambda \in \Rp \text{ such that }0 \in \D_f(x, y) + J_g(x, y) \lambda \}$} is finite \cite{davis2020stochastic}. As a result, Assumption \ref{Assumption_f}(2) is also mild in practice. 

\begin{rmk}
    Under Assumption \ref{Assumption_joint_nondegeneracy}, it is important to note that $\ca{S}(x)$ may exhibit non-Lipschitz behavior and can even be set-valued over $\Rn$. For instance, consider the case where $n = p = 1$ and $g(x, y) = \frac{1}{q} y^q - xy$, with $q > 2$ being a fixed even integer. In this case, $g(x, y)$ satisfies Assumption \ref{Assumption_joint_nondegeneracy}, while $\ca{S}(x) = x^{1/(q-1)}$. Furthermore, if we define $\psi(t) = \max\{0, \revise{(t^2-1)^5}\}$ and set $g(x, y) = \frac{1}{q} \psi(y)^q - xy$, again with $q > 2$ as a fixed even integer, it follows that $g(x, y)$ satisfies Assumption \ref{Assumption_joint_nondegeneracy}, while $\ca{S}(0) = [-1, 1]$. 

    These examples illustrate that the mapping $x \mapsto f(x, \ca{S}(x))$ can be non-Lipschitz and even set-valued over $\Rn$. Consequently, standard double-loop methods \cite{domke2012generic,pedregosa2016hyperparameter,ghadimi2018approximation,grazzi2020iteration,ji2021bilevel} cannot be directly applied to solve \eqref{Prob_BLO} under Assumption \ref{Assumption_joint_nondegeneracy}, without imposing additional regularity conditions on the lower-level subproblem \eqref{Eq_BLO_LL}.
\end{rmk}

\begin{algorithm}[tb]
\caption{Two-phase Hybrid Subgradient Descent Method (TPHSD).}
\label{Alg_TPHSD}
\begin{algorithmic}[1]
\REQUIRE Initial values $(x_0, y_0)$, tolerance $\varepsilon_0$, $\text{flag}_{-1} = 1$, $\ca{S}_{\mathrm{idx}} = \{+\infty\}$.  
\FOR{$k = 0,1,2,\ldots$}
    \STATE Compute $u_k + e_k$ as an approximated evaluation of $\nabla p(\xk, \yk)$;
    \IF{$\text{flag}_{k-1} = 1$}
        \IF{$\norm{u_k + e_k} \geq \varepsilon_k \norm{\nabla_y g(\xk, \yk)}$ and  $\norm{\nabla_y g(\xk, \yk)} \leq \varepsilon_k$}
            \STATE $\text{flag}_{k} = 1$, $\varepsilon_{k+1} = \varepsilon_k$, run the subroutine \eqref{Eq_Subroutine_manifold}
        \ELSE
            \STATE $\text{flag}_{k} = 2$, $\varepsilon_{k+1} = \frac{\varepsilon_k}{2}$, $\ca{S}_{\mathrm{idx}} = \ca{S}_{\mathrm{idx}} \cup \{k\}$,
            run the subroutine \eqref{Eq_Subroutine_gy}
        \ENDIF
    \ELSE
        \IF{$\norm{\nabla_y g(\xk, \yk)} \leq \varepsilon_{k}$}
                  \STATE $\text{flag}_{k} = 1$, $\varepsilon_{k+1} = \varepsilon_k$, $\ca{S}_{\mathrm{idx}} = \ca{S}_{\mathrm{idx}} \cup \{k\}$,
             run the subroutine \eqref{Eq_Subroutine_manifold}

        \ELSE
              \STATE $\text{flag}_{k} = 2$, $\varepsilon_{k+1} = \varepsilon_k$, run the subroutine \eqref{Eq_Subroutine_gy}
        \ENDIF
    \ENDIF
\ENDFOR
\end{algorithmic}
\end{algorithm}

Based on Assumption \ref{Assumption_joint_nondegeneracy} and Assumption \ref{Assumption_f}, we propose a hybrid method that alternates between the two-timescale stochastic subgradient method \eqref{Eq_Subroutine_manifold} and the feasibility restoration scheme \eqref{Eq_Subroutine_gy}. As established later in Theorem \ref{Theo_local_convergence_NS_joint_nondeg}, when initialized within a $\rho$-neighborhood of the manifold $\M$ (i.e., $\{(x, y) \in \Rn \times \Rp: \mathrm{dist}((x, y), \M) \leq \rho\}$), the sequence of iterates $\{({x}_k, {y}_k)\}$ is restricted to within such neighborhood and \revise{every cluster point is a first-order stationary point of} \eqref{Prob_Con}. However, the exact value of $\rho$ is generally unknown in practice. To address this, Algorithm \ref{Alg_TPHSD} employs a hybrid scheme that adaptively adjusts $\varepsilon_k$ to approximate $\rho$.

In each iteration of Algorithm \ref{Alg_TPHSD}, \revise{when $\nabla_y g(\xk, \yk)\neq 0$,} we compute the ratio  $\mu_k := \frac{\norm{u_k + e_k}}{\norm{\nabla_y g(\xk, \yk)}}$ to evaluate whether $\{(\xk, \yk)\}$ lies in the particular neighborhood of the manifold $\M$.  Specifically, we consider two cases as follows.
\begin{itemize}
    \item When $\mu_k \geq \varepsilon_k$ and $\norm{\nabla_y g(\xk, \yk)} \leq \varepsilon_k$, the current iterate $(\xk, \yk)$ is sufficiently close to $\M$. In this case, the algorithm employs the subgradient method \eqref{Eq_Subroutine_manifold} to compute the next iterate. As shown later in Theorem \ref{Theo_local_convergence_NS_joint_nondeg}, with a sufficiently small $\varepsilon_k$ and a sufficiently large $k$, the iterates $\{(\xk, \yk)\}$ are restricted to within $\Omegajoint$ and converge to the first-order stationary points of \eqref{Prob_BLO}. 
    \item Conversely, when $\mu_k < \varepsilon_k$ or $\norm{\nabla_y g(\xk, \yk)} > \varepsilon_k$, the current iterate $(\xk, \yk)$ lies outside the desired neighborhood of $\M$. \revise{This suggests that the current tolerance $\varepsilon_k$ is too large to certify that the iterate is in the attraction region, hence we reduce it.} Then, Algorithm \ref{Alg_TPHSD} reduces $\varepsilon_{k+1}$ and switches to the feasibility restoration step \eqref{Eq_Subroutine_gy} to drive the iterates towards $\M$, until the iterates re-enter the $\varepsilon_{k+1}$-neighborhood of $\M$. 
\end{itemize}
The detailed algorithm is presented in Algorithm \ref{Alg_TPHSD}. Moreover, let $e_k := (e_{x,k}, e_{y,k})$ and $u_k := (u_{x,k}, u_{y,k})$, and we make the following assumptions on Algorithm \ref{Alg_TPHSD}.
\begin{assumpt}
    \label{Assumption_alg_smooth}
    \begin{enumerate}
        \item The sequence $\{(\xk, \yk)\}$ is uniformly bounded. That is, there exists $M_{iter} > 0$ such that $\sup_{k\geq 0} \norm{\xk} + \norm{\yk} \leq M_{iter}$. 
        \item The stepsizes are positive and two-timescale, in the sense that 
        \begin{equation} 
            \sum_{k\geq 0} \eta_k = +\infty,  \quad \lim_{k\to +\infty} \theta_k = 0,  \quad \lim_{k\to +\infty} \frac{\eta_k}{\theta_k} = 0, \quad \lim_{k\to +\infty} \frac{\theta_k^2}{\eta_k} = 0. 
        \end{equation}
        \item There exists $M_e> 0$ and a diminishing sequence of nonnegative real numbers $\{\omega_k\}$ such that 
        \begin{equation}
            \norm{e_k} \leq M_e \min\{1, \omega_k\}\norm{\nabla_y g(\xk, \yk)}^2, \quad \forall k\geq 0. 
        \end{equation}
    \end{enumerate}
\end{assumpt}

Here are some remarks on Assumptions \ref{Assumption_alg_smooth}. 
\begin{rmk}
    \label{Rmk_interpolation_nabla_p}
    For any $(\xk, \yk)$, consider the following finite difference scheme for computing $(u_{x, k}, u_{y, k})$,
    \revise{\begin{equation*}
        (u_{x, k} + e_{x,k}, u_{y, k} + e_{y,k}) = \frac{\nabla g(\xk, \yk + t_k \nabla_{y}g(\xk, \yk)) - \nabla g(\xk, \yk)}{t_k},
    \end{equation*}}
    then there exists a constant $M^{\star}>0$ such that we have the following estimation for the approximation error,
    \begin{equation*}
       \norm{\frac{\nabla g(\xk, \yk + t_k \nabla_{y}g(\xk, \yk)) - \nabla g(\xk, \yk)}{t_k} - \nabla p(\xk, \yk)  } \leq  M^{\star}t_k \norm{\nabla_y g(\xk, \yk)}^2 . 
    \end{equation*}
    Therefore, by choosing $t_k = o(1)$, we can conclude that the above finite difference scheme satisfies Assumption \ref{Assumption_alg_smooth}(3). 
\end{rmk}
\revise{Furthermore, Assumption \ref{Assumption_alg_smooth}(1) is standard in the convergence analysis of stochastic subgradient methods for minimizing nonsmooth and non-Clarke-regular functions \cite{benaim2005stochastic,borkar2009stochastic,davis2020stochastic,bolte2020mathematical,bolte2021conservative,castera2021inertial,xiao2023adam}.}
\revise{Additionally, to choose the stepsizes $\{\eta_k\}$ and $\{\theta_k\}$ that satisfy Assumption \ref{Assumption_alg_smooth}(2), we can use the following polynomial stepsize scheme:
$$ 
\eta_k = \eta_0(k+1)^{-a}, \qquad \theta_k = \theta_0(k+1)^{-b}, \qquad \text{where} \quad 0<\frac{a}{2} < b < a < 1. 
$$
}

Now we present the following theorem, which establishes the convergence properties of Algorithm \ref{Alg_TPHSD}. The proof of Theorem \ref{Theo_convergence_NS_joint_nondeg} is presented in Section \ref{Subsection_proof_main_results} for a clearer presentation of our main results. 
\begin{theo}
    \label{Theo_convergence_NS_joint_nondeg}
    Suppose Assumption \ref{Assumption_joint_nondegeneracy}, Assumption \ref{Assumption_f}, and Assumption \ref{Assumption_alg_smooth}  hold. Then for any sequence $\{(\xk, \yk)\}$ generated by Algorithm \ref{Alg_TPHSD},  any of its cluster points is a first-order stationary point of \eqref{Prob_BLO}. 
\end{theo}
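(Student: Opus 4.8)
The plan is to reduce the convergence of the hybrid Algorithm~\ref{Alg_TPHSD} to the local convergence of \eqref{Eq_Subroutine_manifold} (Theorem~\ref{Theo_local_convergence_NS_joint_nondeg}) by proving that the iterates eventually commit to the subroutine \eqref{Eq_Subroutine_manifold} forever, from within the $\rho$-neighborhood of $\M$ in which that theorem applies, and with a sufficiently small constant tolerance. The two ingredients I would invoke are: (i) the local convergence of \eqref{Eq_Subroutine_manifold}, which guarantees that once the iterates enter the attraction neighborhood they remain there and the certifying conditions $\mu_k \geq \varepsilon_k$ and $\norm{\nabla_y g(\xk,\yk)} \leq \varepsilon_k$ keep holding; and (ii) the feasibility-restoration property of \eqref{Eq_Subroutine_gy}, namely $\liminf_{k\to +\infty}\norm{\nabla_y g(\xk,\yk)} = 0$ along any tail on which \eqref{Eq_Subroutine_gy} is run. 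The whole argument is organized around the monotone behaviour of the adaptive tolerance $\{\varepsilon_k\}$.

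First I would record the switching bookkeeping. By construction $\{\varepsilon_k\}$ is nonincreasing and is halved \emph{exactly} at the transitions from $\mathrm{flag}_{k-1}=1$ to $\mathrm{flag}_k=2$, while remaining constant throughout any phase in which \eqref{Eq_Subroutine_gy} is run. Next I would rule out that \eqref{Eq_Subroutine_gy} is run from some index on: along such a tail the tolerance is frozen at a constant $\bar\varepsilon>0$, so ingredient (ii) forces $\norm{\nabla_y g(\xk,\yk)} \leq \bar\varepsilon$ at some later step, which satisfies the re-entry test $\norm{\nabla_y g}\le\varepsilon_k$ and switches the flag back to $1$, a contradiction. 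Hence \eqref{Eq_Subroutine_manifold} is re-entered infinitely often unless it is eventually run forever.

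The crux is to show that only finitely many halving transitions occur, so that $\varepsilon_k$ stabilizes at some $\bar\varepsilon>0$. I would argue by contradiction: if there were infinitely many such transitions then $\varepsilon_k \to 0$. Using Assumption~\ref{Assumption_joint_nondegeneracy}, the identity $\nabla p = J_g(x,y)\,\nabla_y g(x,y)$, and the error bound of Assumption~\ref{Assumption_alg_smooth}(3), the ratio $\mu_k=\norm{u_k+e_k}/\norm{\nabla_y g}$ is pinched between the extreme singular values of $J_g$, which are bounded away from $0$ on the bounded (by Assumption~\ref{Assumption_alg_smooth}(1)) iterate region near $\M$; and the full-rank property yields a uniform error bound $\norm{\nabla_y g(x,y)} \geq c\,\mathrm{dist}((x,y),\M)$ there. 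Consequently there is a threshold $\bar\varepsilon^\ast>0$ such that, whenever $\varepsilon_k \leq \bar\varepsilon^\ast$ and \eqref{Eq_Subroutine_manifold} is entered via the re-entry test $\norm{\nabla_y g}\le\varepsilon_k$, the iterate already lies in the $\rho$-neighborhood and, by ingredient (i), stays there with $\mu_k\geq\varepsilon_k$ and $\norm{\nabla_y g}\le\varepsilon_k$ maintained for all subsequent steps, so that no further halving occurs. Since $\varepsilon_k\to 0$ would force $\varepsilon_k\le\bar\varepsilon^\ast$ at some re-entry, this contradicts having infinitely many halvings. Therefore $\varepsilon_k$ stabilizes, and combining with the previous paragraph the algorithm runs \eqref{Eq_Subroutine_manifold} forever from some index on, necessarily with $\bar\varepsilon\le\bar\varepsilon^\ast$.

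Finally I would apply Theorem~\ref{Theo_local_convergence_NS_joint_nondeg} to this terminal \eqref{Eq_Subroutine_manifold}-phase: its iterates remain in the $\rho$-neighborhood of $\M$ and every cluster point is a first-order stationary point of \eqref{Prob_Con}, which under Assumption~\ref{Assumption_joint_nondegeneracy} coincides with a first-order stationary point of \eqref{Prob_BLO} in the sense of Definition~\ref{Defin_stationary_point}. Since the tail carries all cluster points of $\{(\xk,\yk)\}$, this proves the theorem. I expect the main obstacle to be the ``no-switch permanence'' step: making rigorous that, once inside the attraction neighborhood with small $\varepsilon$, the certifying inequalities $\mu_k\geq\varepsilon_k$ and $\norm{\nabla_y g}\le\varepsilon_k$ are self-sustaining under the two-timescale dynamics of \eqref{Eq_Subroutine_manifold}. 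This is where the auxiliary mapping $\A$ of \eqref{Eq_defin_cdmapping} and the single-timescale reduction through $h_\beta$ in \eqref{Eq_defin_cdf} should be leveraged, together with the uniform conditioning of $J_g$ and the error bound, to guarantee that $\norm{\nabla_y g}$ does not re-inflate past $\varepsilon_k$ during the \eqref{Eq_Subroutine_manifold} updates.
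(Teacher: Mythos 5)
Your overall architecture matches the paper's: rule out an infinite terminal \eqref{Eq_Subroutine_gy} phase via $\liminf_k\norm{\nabla_y g(\xk,\yk)}=0$ (the paper's Lemma \ref{Le_diminish_ukek_joint_nondeg} and Proposition \ref{Prop_finite_terminiate_gy_joint_nondeg}), show by contradiction that only finitely many halvings occur using the lower bound $\mu_k\geq \sigma/4$ on $\Omega$ (Lemma \ref{Le_error_esti_ukek_joint_nondeg} and Proposition \ref{Prop_stable_joint_nondeg}), and then invoke the local convergence result Theorem \ref{Theo_local_convergence_NS_joint_nondeg} on the terminal \eqref{Eq_Subroutine_manifold} phase. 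However, there is a genuine gap in your last step. You conclude that the stabilized tolerance satisfies $\bar\varepsilon\le\bar\varepsilon^\ast$, but this does not follow: halving stops as soon as the certifying conditions $\mu_k\geq\varepsilon_k$ and $\norm{\nabla_y g(\xk,\yk)}\le\varepsilon_k$ happen to persist, which can occur while $\varepsilon_k$ is still large (your threshold argument only shows that halving \emph{must} stop once $\varepsilon_k\le\bar\varepsilon^\ast$; it does not prevent it from stopping earlier). If $\bar\varepsilon>\bar\varepsilon^\ast$, the re-entry test $\norm{\nabla_y g}\le\bar\varepsilon$ does not place the iterate in the $\rho$-neighborhood of $\M$, so Theorem \ref{Theo_local_convergence_NS_joint_nondeg} cannot be applied to the start of the terminal phase, and your final paragraph's premise (``its iterates remain in the $\rho$-neighborhood'') is unsupported.

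The paper closes exactly this hole with Lemma \ref{Le_Subroutine_manifold_nonincrease}: along any \eqref{Eq_Subroutine_manifold} phase on which the ratio condition $\norm{u_k+e_k}\geq\tilde\varepsilon\norm{\nabla_y g(\xk,\yk)}$ holds throughout (which is automatic on the terminal phase with $\tilde\varepsilon=\varepsilon_{\min}$, since otherwise a switch would be triggered), a descent estimate on $p(\xk,\yk)$ forces $\liminf_k\norm{\nabla_y g(\xk,\yk)}=0$ and hence $\liminf_k\mathrm{dist}((\xk,\yk),\Omega)=0$, regardless of how large $\varepsilon_{\min}$ is. Once some iterate $(x_{K_1},y_{K_1})$ enters $\Omega$, Proposition \ref{Prop_restrict_manifold_joint_nondeg_New} confines all subsequent iterates to $\Omega$ and Theorem \ref{Theo_local_convergence_NS_joint_nondeg} applies. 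You should add this drift-into-$\Omega$ argument (or an equivalent one) for the terminal phase; without it the reduction to the local convergence theorem is incomplete. The ``no-switch permanence'' issue you flag at the end is real but is already handled by Lemma \ref{Le_error_esti_ukek_joint_nondeg} together with the confinement in Proposition \ref{Prop_restrict_manifold_joint_nondeg_New} and the decay of $\nabla_y g$ in Lemma \ref{Le_esti_gy_joint_nondeg}.
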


\subsection{Proof of Theorem \ref{Theo_convergence_NS_joint_nondeg}}
\label{Subsection_proof_main_results}

\revise{To make the main proof technique clearer, we outline our convergence analysis below:
\begin{itemize}
    \item \textbf{Section \ref{Subsection_cdf}: Constructing the Lyapunov Function}. 
    We first analyze the constraint-dissolving penalty function $h_\beta(x, y)$ defined in \eqref{Eq_defin_cdf}. We prove that it acts as an exact penalty function for (ECP). More importantly, we show that $h_\beta(x, y)$ can serve as a Lyapunov function for the continuous-time subdifferential flow, which links the convergence properties between \eqref{Eq_Subroutine_manifold} and certain continuous-time differential inclusions.
    
    \item \textbf{Section \ref{Subsection_TMG}: Local Convergence of \eqref{Eq_Subroutine_manifold}}. 
    Directly analyzing the two-timescale scheme \eqref{Eq_Subroutine_manifold} is challenging. Therefore, we construct an auxiliary sequence $(w_k, z_k) := \A(x_k, y_k)$. We prove that this auxiliary sequence can be regarded as a single-timescale inexact subgradient descent on $h_\beta(x, y)$. Then we demonstrate that if \eqref{Eq_Subroutine_manifold} is initialized within a sufficiently close neighborhood $\Omega$ of the feasible region $\M$, the sequence will stay within $\Omega$ and converge to a first-order stationary point of \eqref{Prob_BLO}.
    
    
    \item \textbf{Section \ref{Subsection_globalization}: Globalization through the Hybrid Approach}.
    Finally, we analyze the convergence of the proposed Two-phase Hybrid Subgradient Descent (TPHSD) algorithm. We prove that the feasibility restoration scheme \eqref{Eq_Subroutine_gy} drives iterates from anywhere in the space toward the attraction region $\Omega$. We then show that the adaptive switching between (TMG) and (FRG) in Algorithm \ref{Alg_TPHSD} is triggered only a finite number of times. Consequently, Algorithm \ref{Alg_TPHSD} eventually follows the \eqref{Eq_Subroutine_manifold} phase within $\Omega$, yielding global convergence to a first-order stationary point of \eqref{Prob_BLO}.
\end{itemize}
}



To establish the convergence properties of Algorithm \ref{Alg_TPHSD}, we first define several constants under Assumption \ref{Assumption_joint_nondegeneracy}, Assumption \ref{Assumption_f}, and Assumption \ref{Assumption_alg_smooth} as follows:
\begin{itemize}
    \item 
    $L_f := \sup \{\norm{d} : d\in \mathcal{D}_f(x,y), 
    \norm{x} + \norm{y} \leq M_{iter} \} 
    $,
    \item $L_g := \sup_{\norm{x} + \norm{y} \leq M_{iter}} \norm{\nabla g(x, y)}$, \quad
    $M_g := \sup_{\norm{x} + \norm{y} \leq M_{iter}} \norm{J_g(x,y)}$,
     
    \item  $L_p := \sup_{\norm{x} + \norm{y} \leq M_{iter}} \norm{\nabla p(x, y)}$, \quad $M_p := \sup_{\norm{x} + \norm{y} \leq M_{iter}} \norm{\nabla^2 p(x, y)}$, 

        \item \revise{$L_A := \sup_{\norm{x} + \norm{y} \leq M_{iter}} \norm{\nabla \A(x, y)}$}, \;
    \revise{$M_A := \sup_{\norm{x} + \norm{y} \leq M_{iter}} \norm{\nabla^2 \A(x, y)}$},
    
    \item $M_{g,3} := \sup_{\norm{x} + \norm{y} \leq M_{iter}} \norm{\nabla^3 g(x, y) }$, 
    
    \item $\sigma := \revise{\min\Big\{ 1, \;} \inf \big\{\sigma_{\min}\left(J_g(x,y) \right) : \norm{x} + \norm{y} \leq M_{iter},(x,y)\in \mathcal{M}
    \big\} \Big\}$, 
    \item $\rho:= \min\left\{ \frac{\sigma}{4M_g L_g(4M_e +1)}, \frac{M_g}{8 M_e}, \frac{\sigma M_g}{2M_{g,3} +1}, \frac{\sigma}{64M_e}\right\}$. 
\end{itemize}

Then, we define the attractive region $\Omegajoint$ as 
\begin{equation}
    \Omegajoint := \Big\{(x, y) \in \Rn \times \Rp: \mathrm{dist}\left((x, y), \M \right) \leq \frac{\rho}{M_g},  ~\norm{x} + \norm{y} \leq M_{iter} \Big\},
\end{equation}
which depends on both Assumption \ref{Assumption_joint_nondegeneracy} and Assumption \ref{Assumption_alg_smooth}.


\subsubsection{Basic properties of exact penalty function}
\label{Subsection_cdf}

In this part, we present some basic properties and the exactness of the constraint dissolving function $h_{\beta}(x, y)$. Recall the mapping $J_g: \Rn \times \Rp \to \bb{R}^{(n+p)\times p}$ defined by
\begin{equation}
    J_g(x, y) := \left[
            \begin{smallmatrix}
                \nabla_{xy}^2 g(x, y)\\
                \nabla_{yy}^2 g(x, y)
            \end{smallmatrix}
            \right], \quad \text{and} \quad 
            \nabla p(x,y) = J_g(x,y)\nabla_y g(x,y).
\end{equation} 
We have the following lemma based on the non-degeneracy of $J_g(x, y)$ over $\M$. 
\begin{lem}
    \label{Le_welldef_penalty_joint_nondeg}
    Suppose Assumption \ref{Assumption_joint_nondegeneracy} and Assumption \ref{Assumption_f} hold. Then for any $(x, y) \in \Omegajoint$, it holds that $\norm{\nabla p(x, y)} \geq \frac{\sigma}{2} \norm{\nabla_y g(x, y)}$. 
\end{lem}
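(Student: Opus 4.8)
The plan is to reduce the claimed inequality to a uniform lower bound on the smallest singular value of $J_g$ over the attractive region $\Omegajoint$. The starting point is the identity $\nabla p(x,y) = J_g(x,y)\nabla_y g(x,y)$, already recorded in the excerpt, which immediately gives $\norm{\nabla p(x,y)} = \norm{J_g(x,y)\nabla_y g(x,y)} \geq \sigma_{\min}(J_g(x,y))\,\norm{\nabla_y g(x,y)}$. Hence it suffices to prove that $\sigma_{\min}(J_g(x,y)) \geq \tfrac{\sigma}{2}$ for every $(x,y)\in\Omegajoint$, and the lemma follows by taking $v = \nabla_y g(x,y)$.

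To establish this singular-value bound, I would argue by perturbation from the feasible region. Fix $(x,y)\in\Omegajoint$ and let $(\hat{x},\hat{y})$ be a nearest point of $\M$, so that $\norm{(x,y)-(\hat{x},\hat{y})} = \mathrm{dist}((x,y),\M) \leq \rho/M_g$. At the feasible point $(\hat{x},\hat{y})\in\M$, Assumption \ref{Assumption_joint_nondegeneracy} together with the definition of $\sigma$ yields $\sigma_{\min}(J_g(\hat{x},\hat{y})) \geq \sigma$. Since $g$ is three times continuously differentiable and $J_g$ collects second-order partial derivatives of $g$, the mapping $(x,y)\mapsto J_g(x,y)$ is Lipschitz on the bounded ball $\{\norm{x}+\norm{y}\leq M_{iter}\}$ with constant $M_{g,3}$, so that $\norm{J_g(x,y)-J_g(\hat{x},\hat{y})} \leq M_{g,3}\cdot \rho/M_g$. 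The definition of $\rho$ forces $\rho \leq \tfrac{\sigma M_g}{2M_{g,3}+1}$, whence $M_{g,3}\cdot \rho/M_g \leq \tfrac{M_{g,3}\,\sigma}{2M_{g,3}+1} \leq \tfrac{\sigma}{2}$, using $\tfrac{M_{g,3}}{2M_{g,3}+1}\leq\tfrac12$. Finally, applying the standard fact that $A\mapsto\sigma_{\min}(A)$ is $1$-Lipschitz in the operator norm, i.e.\ $|\sigma_{\min}(J_g(x,y))-\sigma_{\min}(J_g(\hat{x},\hat{y}))| \leq \norm{J_g(x,y)-J_g(\hat{x},\hat{y})}$, gives $\sigma_{\min}(J_g(x,y)) \geq \sigma - \tfrac{\sigma}{2} = \tfrac{\sigma}{2}$, which is exactly what is needed.

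The step I expect to require the most care is the transfer of bounds from the nearest feasible point $(\hat{x},\hat{y})$ back to $(x,y)$. There are two things to pin down: first, that the Lipschitz constant of $J_g$ on the relevant ball is genuinely controlled by the third-derivative bound $M_{g,3}$ (tying the tensor norm of $\nabla^3 g$ to the operator-norm increment of $J_g$ via the mean value inequality); and second, that the projection $(\hat{x},\hat{y})$ still lies in the region $\{(x,y)\in\M:\norm{x}+\norm{y}\leq M_{iter}\}$ on which $\sigma$ is defined, so that the bound $\sigma_{\min}(J_g(\hat{x},\hat{y}))\geq\sigma$ is legitimate. The latter I would handle by noting that $(\hat{x},\hat{y})$ lies within the small distance $\rho/M_g$ of the bounded iterate $(x,y)$, so it remains in the region where the constants apply (enlarging $M_{iter}$ by this fixed small amount in the definitions if necessary). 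Everything else is a routine application of the singular-value perturbation inequality and the explicit numerical choice of $\rho$.
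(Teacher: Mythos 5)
Your proposal is correct and follows essentially the same route as the paper's proof: both reduce the claim to the bound $\sigma_{\min}(J_g(x,y))\geq \sigma/2$ via the identity $\nabla p = J_g\nabla_y g$, and both obtain that bound by perturbing from a nearby feasible point using the Lipschitz constant $M_{g,3}$ of $J_g$, the $1$-Lipschitzness of $\sigma_{\min}$, and the choice $\rho\leq \frac{\sigma M_g}{2M_{g,3}+1}$. The care you flag about the nearest feasible point remaining in the region where $\sigma$ is defined is a reasonable observation, but the paper handles it the same way by simply asserting the existence of $(\tilde{x},\tilde{y})\in\Omegajoint\cap\M$ within distance $\rho/M_g$.
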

\begin{proof}
    For any $(x, y) \in \Omegajoint$, there exists $(\tilde{x}, \tilde{y}) \in \Omegajoint \cap \M$ such that $\norm{(x, y) - (\tilde{x}, \tilde{y})} \leq \frac{\rho}{M_g}$.  Then from Assumption \ref{Assumption_joint_nondegeneracy} and the choices of $\rho$, it holds that 
    \begin{equation*}
        \sigma_{\min}\left(J_g(x,y) \right) \geq \sigma_{\min}\left(
       J_g(\tilde{x},\tilde{y})\right) - \frac{\rho}{M_g} M_{g,3} \geq  \sigma - \frac{\rho}{M_g} M_{g,3} \geq   \frac{\sigma}{2}. 
    \end{equation*}
    Therefore, we have $\nabla p(x,y) = J_g(x,y)\nabla_y g(x,y)$
    and
    \begin{equation*}
        \norm{\nabla p(x, y)}  \geq \sigma_{\min}\left(
        J_g(x,y) \right) \norm{\nabla_y g(x, y)} \geq \frac{\sigma}{2} \norm{\nabla_y g(x, y)}.
    \end{equation*}
    This completes the proof. 
\end{proof}

Recall the constraint dissolving mapping $\A$ defined in \eqref{Eq_defin_cdmapping}, Lemma \ref{Le_welldef_penalty_joint_nondeg} illustrates that $\A$ is well-defined within $\Omegajoint$. We denote the Jacobian of $\A$ as $\nabla \A$, and define the following two auxiliary mappings: 
\begin{equation}
    \label{Eq_defin_PA_RA}
    P_{\A}(x, y) = I_{n+p} - J_g(x, y) J_g(x, y)^{\dagger}, \quad R_{\A}(x, y) = \nabla \A(x, y) - P_{\A}(x, y). 
\end{equation}
It is worth mentioning that $P_{\A}(x,y) \nabla p(x,y) = 0$ holds for any $(x, y) \in \Omega$. 

Then the following lemma establishes the relationship between $\norm{R_{\A}(x, y) \nabla p(x, y)}$ and $\norm{\nabla_y g(x, y)}$, which is important for establishing the exactness of $h_{\beta}(x, y)$. 
\begin{lem}
    \label{Le_error_bound_cdf}
    Suppose Assumption \ref{Assumption_joint_nondegeneracy} and Assumption \ref{Assumption_f} hold. Then $\A$ is continuously differentiable over $\Omegajoint$. Moreover, it holds for any $(x, y) \in \Omega$ that $\norm{R_{\A}(x, y) \nabla p(x, y)} \leq \frac{8M_gM_{g, 3}}{\sigma^2} \norm{\nabla_y g(x, y)}^2$. 
\end{lem}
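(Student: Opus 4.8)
The plan is to treat $\A$ as the map $z \mapsto z - M(z)\,\nabla_y g(z)$ with $z=(x,y)$ and $M(z) := (J_g(z)^{\dagger})\tp = J_g(z)(J_g(z)\tp J_g(z))^{-1}$, and to reduce the claimed inequality to an estimate on the directional derivative of $M$. First I would settle differentiability: by the argument in the proof of Lemma \ref{Le_welldef_penalty_joint_nondeg} (using Assumption \ref{Assumption_joint_nondegeneracy} and the choice of $\rho$), we have $\sigma_{\min}(J_g(x,y)) \geq \sigma/2 > 0$ on $\Omegajoint$, so $J_g\tp J_g$ is invertible there; since $g$ is three times continuously differentiable, $J_g$ and $\nabla_y g$ are continuously differentiable, hence so are $(J_g\tp J_g)^{-1}$, $M$, and $\A$. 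This gives the first assertion.

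Next I would compute $\nabla\A$ exactly. The crucial structural fact is that the Jacobian of $z \mapsto \nabla_y g(z)$ with respect to $z=(x,y)$ is precisely $J_g(x,y)\tp$, by the definitions of the blocks $\nabla^2_{xy}g$ and $\nabla^2_{yy}g$. Writing $b := \nabla_y g$ and $J := J_g$, the product rule gives $\nabla(Mb) = T + M J\tp$, where $T$ is the matrix with $Tv = (DM[v])\,b$ and $M J\tp = J(J\tp J)^{-1}J\tp = JJ^{\dagger}$. Hence $\nabla\A = I_{n+p} - JJ^{\dagger} - T = P_{\A} - T$, so that $R_{\A} = -T$ and $R_{\A}\nabla p = -(DM[\nabla p])\,b$, using $\nabla p = Jb$.

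The heart of the proof is to rewrite $(DM[\nabla p])\,b$ in a form that makes both the quadratic dependence on $\norm{\nabla_y g}$ and the sharp constant transparent. I would use the two identities $J\tp M = I_p$ (differentiating gives $J\tp(DM[v]) = -(DJ[v])\tp M$) and $P_{\A}J = 0$ (which gives $P_{\A}(DM[v]) = P_{\A}(DJ[v])(J\tp J)^{-1}$). Splitting $DM[v]b$ along $I = JJ^{\dagger} + P_{\A}$ and substituting $v = \nabla p = Jb$ yields the two-term expression
\[
R_{\A}\nabla p = M\,(DJ[\nabla p])\tp M b - P_{\A}\,(DJ[\nabla p])\,(J\tp J)^{-1}b.
\]
I would then bound each term using $\norm{M} = \norm{J^{\dagger}} \leq 2/\sigma$, $\norm{(J\tp J)^{-1}} \leq 4/\sigma^2$, $\norm{P_{\A}} \leq 1$, and $\norm{DJ[\nabla p]} \leq M_{g,3}\norm{\nabla p} \leq M_{g,3}M_g\norm{\nabla_y g}$. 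Each of the two terms is at most $\tfrac{4 M_g M_{g,3}}{\sigma^2}\norm{\nabla_y g}^2$, and summing gives exactly the claimed constant $8$.

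The main obstacle I anticipate is the matrix-calculus bookkeeping in the middle step. One must correctly identify $\nabla_z(\nabla_y g) = J_g\tp$, so that the $M\,\nabla b$ contribution collapses exactly to $I - P_{\A}$ and hence $R_{\A} = -T$; and then perform the algebraic split via $J\tp M = I_p$ and $P_{\A}J = 0$. This split is precisely what produces the sharp constant: a direct Lipschitz estimate on $DM$ would generate additional terms scaling like $\sigma^{-4}$ together with a substantially larger constant, so the cancellation that keeps both surviving terms at the $\sigma^{-2}$ scale is essential to matching $\tfrac{8 M_g M_{g,3}}{\sigma^2}$.
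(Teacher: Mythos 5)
Your proof is correct and takes essentially the same route as the paper: both arguments rest on the identity $R_{\A}(x,y) = -\nabla (J_g^\dagger)\tp(x,y)\,\nabla_y g(x,y)$ together with a bound on the derivative of the pseudo-inverse, and the constant $8 = 4+4$ arises in both from the same two-term Golub--Pereyra decomposition of $D(J_g^\dagger)$. The only difference is that the paper cites \cite[Equation (4.12)]{golub1973differentiation} for the Lipschitz constant $\frac{8M_{g,3}}{\sigma^2}$ of $J_g^\dagger$ and then multiplies by $\norm{\nabla p}\leq M_g\norm{\nabla_y g}$, whereas you derive that formula by hand via $J_g\tp M = I_p$ and $P_{\A}J_g=0$ and bound the two resulting terms directly.
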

\begin{proof}
    From the Lipschitz continuity of $J_g$ over $\Omega$ and Lemma \ref{Le_welldef_penalty_joint_nondeg}, the Lipschitz constant of $J_g(x, y)^{\dagger}$ over $\Omega$ is \revise{$\frac{8M_{g, 3}}{\sigma^2}$}, as demonstrated in \cite[Equation (4.12)]{golub1973differentiation}.  

    Note that $R_{\A}(x,y) = - \nabla (J_g^\dagger)\tp(x,y) \nabla_y g(x,y)$. Therefore, it holds for any $(x, y) \in \Omega$ that \revise{$\norm{R_{\A}(x, y)} \leq \frac{8M_{g, 3}}{\sigma^2} \norm{\nabla_y g(x, y)}$. Then it holds that
    \begin{equation}
        \norm{R_{\A}(x, y) \nabla p(x, y)} \leq \frac{8M_{g, 3}}{\sigma^2} \norm{\nabla_y g(x, y)} \norm{\nabla p(x, y)} \leq \frac{8M_gM_{g, 3}}{\sigma^2} \norm{\nabla_y g(x, y)}^2. 
    \end{equation} }
    This completes the proof. 
\end{proof}

Furthermore, we consider the following set-valued mapping
\begin{equation}
    \D_{h_{\beta}}(x, y) := \nabla \A(x, y) \D_f(\A(x, y)) + \beta \nabla p(x, y). 
\end{equation}
As shown in the following lemma, the function $h_{\beta}(x, y)$ admits the set-valued mapping $\D_{h_{\beta}}(x, y)$ as its conservative field. 
\begin{lem}
    \label{Le_conservative_field_joint_nondeg}
    Suppose Assumption \ref{Assumption_joint_nondegeneracy} and Assumption \ref{Assumption_f} hold, then for any $\beta > 0$, $h_{\beta}$ is a path-differentiable function that admits  $\D_{h_{\beta}}$ as its conservative field. 
\end{lem}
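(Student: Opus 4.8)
The plan is to read $h_{\beta}$ as the sum of the composite function $f \circ \A$ and the smooth term $(x,y)\mapsto \beta p(x,y)=\frac{\beta}{2}\norm{\nabla_y g(x,y)}^2$, and then to assemble the claimed field $\D_{h_{\beta}}$ from the two standard calculus rules for conservative fields: stability under composition with a $C^1$ map, and stability under (scaled) summation. Throughout, path-differentiability is understood over the interior of $\Omegajoint$, where Lemma \ref{Le_error_bound_cdf} (via Lemma \ref{Le_welldef_penalty_joint_nondeg}) guarantees that $\A$ is continuously differentiable and hence that $f\circ\A$ is well defined.

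First I would record the two elementary building blocks. Since $g$ is three times continuously differentiable, $p$ is smooth with $\nabla p = J_g(x,y)\nabla_y g(x,y)$, so $p$ is path-differentiable and admits the singleton-valued mapping $(x,y)\mapsto\{\nabla p(x,y)\}$ as a conservative field. By Assumption \ref{Assumption_f}(1), $f$ is path-differentiable with convex-valued conservative field $\D_f$. The core step is the chain rule: because $\A$ is $C^1$ on $\Omegajoint$, the composition $f\circ\A$ is path-differentiable and admits $(x,y)\mapsto \nabla\A(x,y)\D_f(\A(x,y))$ as a conservative field, which is the stability-under-composition result in \cite{bolte2021conservative}. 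I would justify this either by directly invoking that composition result, or by verifying the defining integral identity of Definition \ref{Defin_conservative_field} by hand: given an absolutely continuous curve $\gamma$ into $\Omegajoint$, the curve $\tilde\gamma := \A\circ\gamma$ is again absolutely continuous (as $\A$ is locally Lipschitz on the compact image of $\gamma$), and for any measurable selection $v(t) = \nabla\A(\gamma(t))w(t)+\beta\nabla p(\gamma(t))$ with $w(t)\in\D_f(\A(\gamma(t)))$, the transpose-Jacobian identity $\inner{\gamma'(t),\nabla\A(\gamma(t))w(t)} = \inner{\tilde\gamma'(t),w(t)}$ reduces the line integral of the composite part along $\gamma$ to the line integral of $\D_f$ along $\tilde\gamma$, which equals $f(\A(\gamma(1)))-f(\A(\gamma(0)))$ by the conservativity of $\D_f$; the smooth part integrates to $\beta(p(\gamma(1))-p(\gamma(0)))$ by the fundamental theorem of calculus.

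Finally, the sum rule for conservative fields yields that $h_{\beta}=f\circ\A+\beta p$ is path-differentiable with conservative field $\nabla\A\,\D_f(\A)+\beta\{\nabla p\}=\D_{h_{\beta}}$. It then remains to check that $\D_{h_{\beta}}$ meets the structural requirements of Definition \ref{Defin_conservative_field}: it is nonempty- and convex-valued (the convex set $\D_f(\A(x,y))$ is carried to a convex set by the linear operator $\nabla\A(x,y)$, and translating by the single vector $\beta\nabla p(x,y)$ preserves convexity), locally bounded (from local boundedness of $\D_f$ together with continuity of $\A$, $\nabla\A$, and $\nabla p$), and graph-closed (from graph-closedness of $\D_f$ and the continuity of $\A$, $\nabla\A$, and $\nabla p$). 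I expect the only genuinely delicate point to be the chain-rule/composition step — specifically, guaranteeing that the transported curve $\tilde\gamma=\A\circ\gamma$ remains in the region where $\A$ is differentiable and verifying graph-closedness of the composed set-valued mapping (which also requires a measurable-selection argument to extract the inner selection $w(t)$) — whereas the sum rule and the structural verifications are routine.
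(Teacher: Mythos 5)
Your proposal is correct and follows essentially the same route as the paper, which likewise decomposes $h_{\beta}$ as $f\circ\A$ plus the smooth term $\beta p$ and invokes the chain rule for conservative fields from \cite[Lemma 5]{bolte2021conservative} together with the differentiability of $\A$ and $p$ over $\Omegajoint$. The additional details you supply (the hand verification of the integral identity, the sum rule, and the structural checks on $\D_{h_{\beta}}$) are consistent with, and merely elaborate on, the paper's one-line argument.
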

\begin{proof}
    Notice that $f$ admits $\D_f$ as its conservative field, and both $\A$ and $p$ are differentiable over $\Omega$,
    together with the chain rule for conservative field in \cite[Lemma 5]{bolte2021conservative}, we can conclude that $\D_{h_{\beta}}$ is a conservative field for $h_{\beta}$. This completes the proof. 
\end{proof}

The following proposition illustrates that the function $h_{\beta}(x, y)$ is an exact penalty function for \eqref{Prob_BLO}, in the sense that any first-order stationary point of $h_{\beta}(x, y)$ within $\Omega$ is feasible for \eqref{Prob_Con}, and hence is a first-order stationary point of \eqref{Prob_Con}. 
\begin{prop}
    \label{Prop_exactness_joint_nondeg}
    Suppose Assumption \ref{Assumption_joint_nondegeneracy} and Assumption \ref{Assumption_f} hold. Then for any $(x, y) \in \Omegajoint$ and any $\beta> \frac{32 M_g \revise{M_{g, 3}}L_f}{\sigma^4}$, whenever $(x, y)$ is a first-order stationary point of $h_{\beta}$, we have $\nabla_y g(x, y) = 0$. 
\end{prop}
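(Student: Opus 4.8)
The plan is to argue by contradiction: assume $(x,y)\in\Omegajoint$ is a first-order stationary point of $h_{\beta}$ but $\nabla_y g(x,y)\neq 0$, and derive the bound $\beta\leq\frac{32 M_g M_{g,3} L_f}{\sigma^4}$, which contradicts the hypothesis on $\beta$. By Lemma \ref{Le_conservative_field_joint_nondeg}, $\D_{h_{\beta}}$ is a conservative field for $h_{\beta}$, so the stationarity condition $0\in\D_{h_{\beta}}(x,y)$ unfolds into the existence of a subgradient $d\in\D_f(\A(x,y))$ with $\nabla\A(x,y)\,d + \beta\nabla p(x,y) = 0$. I would then substitute the decomposition $\nabla\A(x,y) = P_{\A}(x,y) + R_{\A}(x,y)$ from \eqref{Eq_defin_PA_RA}, rewriting the identity as $P_{\A}(x,y)d + R_{\A}(x,y)d + \beta\nabla p(x,y) = 0$.

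The key step is to pair this identity with $\nabla p(x,y)$ and exploit the orthogonality $P_{\A}(x,y)\nabla p(x,y)=0$ noted after \eqref{Eq_defin_PA_RA}. Since $P_{\A} = I_{n+p} - J_g J_g^{\dagger}$ is a symmetric orthogonal projector, we have $\inner{P_{\A}d,\nabla p} = \inner{d, P_{\A}\nabla p} = 0$, so the dominant tangential term drops out and we are left with the balance $\beta\norm{\nabla p(x,y)}^2 = -\inner{R_{\A}(x,y)d,\,\nabla p(x,y)}$. This is the heart of the argument: the penalty force $\beta\nabla p$, which lies in the range of $J_g$ (normal to $\M$), can only be balanced by the curvature-induced remainder $R_{\A}d$, and the latter is second order in $\norm{\nabla_y g}$ whereas $\nabla p$ is first order — this order gap is precisely what a large $\beta$ cannot sustain unless $\nabla_y g$ vanishes.

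It then remains to feed in the quantitative estimates. Applying Cauchy–Schwarz together with the operator-norm bound $\norm{R_{\A}(x,y)}\leq\frac{8 M_{g,3}}{\sigma^2}\norm{\nabla_y g(x,y)}$ established inside the proof of Lemma \ref{Le_error_bound_cdf}, the bound $\norm{d}\leq L_f$, and $\norm{\nabla p(x,y)} = \norm{J_g(x,y)\nabla_y g(x,y)}\leq M_g\norm{\nabla_y g(x,y)}$, I obtain $\beta\norm{\nabla p(x,y)}^2\leq\frac{8 M_g M_{g,3} L_f}{\sigma^2}\norm{\nabla_y g(x,y)}^2$. Combining with the lower bound $\norm{\nabla p(x,y)}\geq\frac{\sigma}{2}\norm{\nabla_y g(x,y)}$ from Lemma \ref{Le_welldef_penalty_joint_nondeg} gives $\frac{\beta\sigma^2}{4}\norm{\nabla_y g(x,y)}^2\leq\frac{8 M_g M_{g,3} L_f}{\sigma^2}\norm{\nabla_y g(x,y)}^2$; dividing by $\norm{\nabla_y g(x,y)}^2\neq 0$ yields $\beta\leq\frac{32 M_g M_{g,3} L_f}{\sigma^4}$, the desired contradiction, and hence $\nabla_y g(x,y)=0$.

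The norm estimates are routine, so the main obstacle is conceptual rather than computational: recognizing that pairing with $\nabla p$ and invoking $P_{\A}\nabla p = 0$ is exactly what isolates the stationarity obstruction into the second-order remainder $R_{\A}$, converting a largeness condition on $\beta$ into feasibility. The one technical point I would verify carefully is the use of $\norm{d}\leq L_f$, which requires $\A(x,y)$ to lie in the region where $\D_f$ is controlled by $L_f$; this is guaranteed since $\Omegajoint$ is bounded, $\A$ is continuous there by Lemma \ref{Le_error_bound_cdf}, and $\D_f$ is locally bounded, so $\D_f(\A(\Omegajoint))$ is uniformly bounded.
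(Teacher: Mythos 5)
Your proposal is correct and follows essentially the same route as the paper's proof: unfold stationarity into $\nabla\A(x,y)d+\beta\nabla p(x,y)=0$, pair with $\nabla p$, kill the $P_{\A}$ term via $P_{\A}\nabla p=0$, and bound the $R_{\A}$ term by $\frac{8M_gM_{g,3}L_f}{\sigma^2}\norm{\nabla_y g}^2$ against the lower bound $\frac{\beta\sigma^2}{4}\norm{\nabla_y g}^2$. The only difference is cosmetic (you phrase it as an explicit contradiction, while the paper reads off $\nabla_y g=0$ directly from the resulting inequality).
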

\begin{proof}
    Since $(x, y)$ is a first-order stationary point of $h_{\beta}$, we have
    \begin{equation*}
        \begin{aligned}
            &0 \in \nabla \A(x, y) \D_f(\A(x, y)) + \beta \nabla p(x, y)\\
            ={}& (P_{\A}(x, y) + R_{\A}(x, y)) \D_f(\A(x, y)) + \beta \nabla p(x, y).
        \end{aligned}
    \end{equation*}
    Then there exists $d\in \D_f(\A(x, y))$ such that
    \begin{equation*}
        \begin{aligned}
            &0  =  \inner{\nabla p(x, y), (P_\A(x,y) + R_{\A}(x, y)) d } + \beta \norm{ \nabla p(x, y)}^2\\
            \geq{}& \frac{\beta \sigma^2}{4}\norm{ \nabla_y g(x, y)}^2 - \inner{\nabla p(x, y), R_{\A}(x, y) d }\\
            \geq{}& \frac{\beta \sigma^2}{4}\norm{ \nabla_y g(x, y)}^2 - 
            \frac{8M_g\revise{M_{g, 3}}L_f}{\sigma^2} \norm{ \nabla_y g(x, y)}^2= \frac{\sigma^2}{4} \left( \beta - \frac{32 M_g\revise{M_{g, 3}}L_f}{\sigma^4}\right)\norm{ \nabla_y g(x, y)}^2.
        \end{aligned}
    \end{equation*}
    \revise{Here, the first inequality follows from Lemma \ref{Le_welldef_penalty_joint_nondeg} and the fact that $P_{\A}(x,y) \nabla p(x,y) = 0$,}
    the last inequality follows from Lemma \ref{Le_error_bound_cdf}.  Together with the fact that $\beta> \frac{32 M_g\revise{M_{g, 3}}L_f}{\sigma^4}$, we can conclude that $\nabla_y g(x, y) = 0$. This completes the proof. 
\end{proof}

The following lemma illustrates that any path-differentiable function $\tilde{h}$ is a Lyapunov function for its corresponding subdifferential flow. The proof of Lemma \ref{Le_DI_cdf} directly follows from \cite{davis2020stochastic}, hence is omitted for simplicity. 
\begin{lem}
    \label{Le_DI_cdf}
    For any path-differentiable function $\tilde{h}: \Rn \times \Rp \to \bb{R}$ that admits $\D_{\tilde{h}}: \Rn \times \Rp \rightrightarrows \Rn \times \Rp$ as its conservative field, the differential inclusion $\left(\frac{\mathrm{d}x}{\mathrm{d}t}, \frac{\mathrm{d}y}{\mathrm{d}t} \right) \in -\D_{\tilde{h}}(x, y)$
admits $\tilde{h}$ as its Lyapunov function with stable set \revise{$\ca{S}_{DI} := \{(x, y) \in \Rn \times \Rp:  0 \in \D_{\tilde{h}}(x, y)\}$}. 
\end{lem}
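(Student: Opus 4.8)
The plan is to exploit the defining property of conservative fields, namely the chain rule along absolutely continuous trajectories, to show that $\tilde{h}$ decreases along every solution of the inclusion $(\dot x, \dot y) \in -\D_{\tilde h}(x,y)$ and decreases strictly whenever the solution starts outside $\ca{S}_{DI}$. Recall from Definition \ref{Defin_conservative_field} that, since $\D_{\tilde h}$ is conservative for $\tilde h$, for any absolutely continuous curve $\gamma:\bb{R}_+ \to \Rn \times \Rp$ and any measurable selection $v(t) \in \D_{\tilde h}(\gamma(t))$, the map $t \mapsto \tilde h(\gamma(t))$ is differentiable almost everywhere and satisfies $\frac{\mathrm d}{\mathrm d t}\tilde h(\gamma(t)) = \inner{v(t), \dot\gamma(t)}$. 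A standard consequence of path-differentiability (as used in \cite{davis2020stochastic,bolte2021conservative}) is that this identity in fact holds simultaneously for \emph{every} $w \in \D_{\tilde h}(\gamma(t))$ at almost every $t$, since all selections produce the same value $\inner{w, \dot\gamma(t)}$.

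First I would establish monotonicity. Let $\gamma$ be any solution, so $-\dot\gamma(t) \in \D_{\tilde h}(\gamma(t))$ for almost every $t$. Choosing the admissible selection $v(t) = -\dot\gamma(t)$ in the chain rule gives
\begin{equation*}
    \frac{\mathrm d}{\mathrm d t}\tilde h(\gamma(t)) = \inner{-\dot\gamma(t), \dot\gamma(t)} = -\norm{\dot\gamma(t)}^2 \leq 0
\end{equation*}
for a.e.\ $t \geq 0$. Integrating and using the absolute continuity of $t \mapsto \tilde h(\gamma(t))$ yields $\tilde h(\gamma(t_2)) \leq \tilde h(\gamma(t_1))$ whenever $t_2 \geq t_1 \geq 0$, which is the required non-increasing property valid for all solutions (in particular those starting in $\ca{S}_{DI}$).

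Next I would prove strict decrease for any solution with $\gamma(0) \notin \ca{S}_{DI}$, i.e.\ $0 \notin \D_{\tilde h}(\gamma(0))$. Because $\D_{\tilde h}$ is graph-closed, locally bounded and compact-valued, the map $(x,y) \mapsto \mathrm{dist}(0, \D_{\tilde h}(x,y))$ is lower semicontinuous; since it is strictly positive at $\gamma(0)$, there exist $c>0$ and a neighborhood $U$ of $\gamma(0)$ on which $\norm{w} \geq c$ for every $w \in \D_{\tilde h}(x,y)$. By continuity of $\gamma$ there is $\tau>0$ with $\gamma(t) \in U$ for $t \in [0,\tau]$; on this interval $\norm{\dot\gamma(t)} = \norm{-\dot\gamma(t)} \geq c$, so $\frac{\mathrm d}{\mathrm d t}\tilde h(\gamma(t)) \leq -c^2$ a.e. Integrating over $(0,t]$ for $t \in (0,\tau]$ gives $\tilde h(\gamma(t)) \leq \tilde h(\gamma(0)) - c^2 t < \tilde h(\gamma(0))$, and combining this with the monotonicity already established extends the strict inequality to all $t > 0$. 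This is precisely the Lyapunov property of $\tilde h$ for the inclusion with stable set $\ca{S}_{DI}$.

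The main obstacle is the justification that the chain-rule identity holds for an arbitrary element of the field rather than only for a specific selection; this is exactly where the conservativity of $\D_{\tilde h}$ and the path-differentiability of $\tilde h$ enter, and it is the content imported from \cite{davis2020stochastic,bolte2021conservative}. A secondary technical point is the uniform lower bound $\norm{w} \geq c$ near $\gamma(0)$, which I would obtain from lower semicontinuity of $(x,y)\mapsto \mathrm{dist}(0,\D_{\tilde h}(x,y))$, itself a direct consequence of local boundedness and graph-closedness of $\D_{\tilde h}$.
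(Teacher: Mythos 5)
Your proof is correct: the chain-rule identity for conservative fields along absolutely continuous curves gives $\frac{\mathrm{d}}{\mathrm{d}t}\tilde h(\gamma(t)) = -\norm{\dot\gamma(t)}^2$ a.e., and the lower-semicontinuity argument for $\mathrm{dist}(0,\D_{\tilde h}(\cdot))$ correctly upgrades this to strict decrease off the stable set. The paper omits this proof entirely and defers to \cite{davis2020stochastic}; your argument is precisely the standard one underlying that reference, so it matches the paper's (implicit) approach.
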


\subsubsection{Convergence properties of subgradient method}
\label{Subsection_TMG}
In this part, we analyze the convergence properties of the subgradient method \eqref{Eq_Subroutine_manifold}. 

We first have the following proposition showing that any sequence $\{(\xk, \yk)\}$ with the initial point sufficiently close to $\M$ is restricted to within $\Omegajoint$, when the stepsizes $\{\eta_k\}$ and $\{\theta_k\}$ satisfy the following condition,
\revise{\begin{equation}
    \label{Eq_Cond_Stepsizes}
    \begin{aligned}
        &\sup_{k\geq 0}\theta_k \leq \min\left\{\frac{\sigma}{2\revise{(M_g+1)}(16M_p + M_g +1)},\; \frac{\rho\sigma}{16 M_g^2 L_g (M_g+ M_e  L_g)} \right\},\\
        &\sup_{k\geq 0} \eta_k \leq \frac{\rho \sigma}{16L_f(M_g+1)^2 }, \quad \sup_{k\geq 0}\frac{\eta_k}{\theta_k} 
        \leq \frac{\rho\sigma^2 }{128L_f\revise{(M_g+1)}}, 
        \\
        &\sup_{k\geq 0}\frac{\theta_k^2}{\eta_k} \leq \min\left\{\frac{\sigma}{2M_g}, \revise{\frac{L_f}{M_g(L_p + M_e\rho^2)}}\right\}. 
    \end{aligned}
\end{equation}}

\revise{
\begin{prop}
    \label{Prop_restrict_manifold_joint_nondeg_New}
    Suppose Assumption \ref{Assumption_joint_nondegeneracy}, Assumption \ref{Assumption_f}, and Assumption \ref{Assumption_alg_smooth} hold. For any sequence $\{(\xk, \yk)\}$ generated by \eqref{Eq_Subroutine_manifold} with $(x_0, y_0) \in \Omega$, and the sequences of stepsizes $\{\eta_k\}$ and $\{\theta_k\}$  satisfy \eqref{Eq_Cond_Stepsizes}, it holds that $(\xk, \yk) \in \{(x, y): \norm{x} + \norm{y} \leq M_{\mathrm{iter}},~ \mathrm{dist}\left((x, y), \M \right) \leq \frac{36 L_f}{\sigma^2}\sup_{k\geq 0}\frac{\eta_k}{\theta_k}\} \subseteq \Omega$ for any $k\geq 0$. 
\end{prop}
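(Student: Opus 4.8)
The plan is to track the feasibility residual through the scalar quantity $p(\xk,\yk)=\tfrac12\norm{\nabla_y g(\xk,\yk)}^2$ and to show, by induction on $k$, that the iterates never leave $\Omega$ while this residual is driven down to the equilibrium level dictated by the timescale ratio $\eta_k/\theta_k$. First I would record the cheap structural facts available inside $\Omega$: since $(m_{x,0},m_{y,0})=(d_{x,0},d_{y,0})$ and each momentum term is a convex combination of $m_k$ and $d_k\in\D_f(\xk,\yk)$, one has $\norm{m_{k+1}}\le L_f$; Lemma \ref{Le_welldef_penalty_joint_nondeg} gives the key lower bound $\norm{\nabla p(\xk,\yk)}\ge\tfrac{\sigma}{2}\norm{\nabla_y g(\xk,\yk)}$; and $\norm{\nabla p}\le M_g\norm{\nabla_y g}$ together with $\norm{e_k}\le M_e\norm{\nabla_y g}^2$ (Assumption \ref{Assumption_alg_smooth}(3)) and $(\xk,\yk)\in\Omega$ (so $\norm{\nabla_y g}\le\rho$) control the perturbations. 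Finally, a local error bound $\mathrm{dist}((x,y),\M)\le\tfrac{2}{\sigma}\norm{\nabla_y g(x,y)}$ holds on $\Omega$, since $\sigma_{\min}(J_g)\ge\tfrac\sigma2$ there (as in the proof of Lemma \ref{Le_welldef_penalty_joint_nondeg}) and $\nabla_y g$ vanishes exactly on $\M$; this lets me convert any bound on $\norm{\nabla_y g(\xk,\yk)}$ into the asserted distance bound.

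The heart of the argument is a one-step descent estimate for $p$. Using the $M_p$-smoothness of $p$ and the explicit \eqref{Eq_Subroutine_manifold} step $(\xkp,\ykp)-(\xk,\yk)=-\eta_k m_{k+1}-\theta_k(\nabla p(\xk,\yk)+e_k)$, I expand (abbreviating $\nabla p:=\nabla p(\xk,\yk)$)
\[ p(\xkp,\ykp)\le p(\xk,\yk)-\theta_k\norm{\nabla p}^2-\eta_k\inner{\nabla p,\, m_{k+1}}-\theta_k\inner{\nabla p,\, e_k}+\tfrac{M_p}{2}\norm{(\xkp,\ykp)-(\xk,\yk)}^2. \]
The term $-\theta_k\norm{\nabla p}^2\le-\theta_k\tfrac{\sigma^2}{4}\norm{\nabla_y g}^2$ is the contraction. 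The remaining terms are perturbations: the subgradient drift is at most $\eta_k L_f M_g\norm{\nabla_y g}$, the evaluation error at most $\theta_k M_g M_e\norm{\nabla_y g}^3$, and the curvature term splits into a part proportional to $\norm{\nabla_y g}^2$ (via $\norm{\nabla p}\le M_g\norm{\nabla_y g}$, absorbed by the smallness of $\theta_k$) and an $O(\eta_k^2)$ remainder together with a $\theta_k^2\norm{\nabla p+e_k}^2\le\theta_k^2(L_p+M_e\rho^2)^2$ piece. The role of the calibrated constants in $\rho$ (notably $\rho\le\tfrac{\sigma}{64M_e}$, killing the cubic error term) and the caps in \eqref{Eq_Cond_Stepsizes} (notably $\sup_k\theta_k$ and $\sup_k\tfrac{\theta_k^2}{\eta_k}$) is precisely to force the error and curvature perturbations to be at most, say, one quarter each of the contraction, leaving the $O(\eta_k)$ subgradient drift as the only term competing with it. After absorption I obtain a clean recursion
\[ p(\xkp,\ykp)\le p(\xk,\yk)-\tfrac{\theta_k\sigma^2}{8}\norm{\nabla_y g(\xk,\yk)}^2+c\,\eta_k\norm{\nabla_y g(\xk,\yk)}, \]
for an explicit constant $c$ built from $L_f$ and $M_g$.

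With this recursion I run the induction. The right-hand side is strictly negative once $\norm{\nabla_y g(\xk,\yk)}$ exceeds the equilibrium level $\tfrac{8c}{\sigma^2}\tfrac{\eta_k}{\theta_k}$, so above this level the residual strictly decreases; below it, the one-step increase is only $O(\eta_k\norm{\nabla_y g})$ and, again by \eqref{Eq_Cond_Stepsizes}, cannot push $\norm{\nabla_y g}$ past the target level $\tfrac{18 L_f}{\sigma}\sup_k\tfrac{\eta_k}{\theta_k}$. Hence the target set is forward invariant and attracting, while throughout the transient the residual stays below $\max\{\norm{\nabla_y g(x_0,y_0)},\,\tfrac{18L_f}{\sigma}\sup_k\tfrac{\eta_k}{\theta_k}\}\le\rho$, so $(\xk,\yk)$ never leaves $\Omega$; this simultaneously verifies that every constant invoked at step $k$ remains valid, closing the induction. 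Translating through the error bound yields $\mathrm{dist}((\xk,\yk),\M)\le\tfrac{2}{\sigma}\cdot\tfrac{18 L_f}{\sigma}\sup_k\tfrac{\eta_k}{\theta_k}=\tfrac{36 L_f}{\sigma^2}\sup_k\tfrac{\eta_k}{\theta_k}$, and the bound $\sup_k\tfrac{\eta_k}{\theta_k}\le\tfrac{\rho\sigma^2}{128L_f(M_g+1)}$ from \eqref{Eq_Cond_Stepsizes} shows this radius is $\le\tfrac{\rho}{M_g}$, giving the asserted inclusion in $\Omega$.

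I expect the main obstacle to be the one-step bookkeeping itself: because \eqref{Eq_Subroutine_manifold} mixes a slow $\eta_k$-subgradient move with a fast $\theta_k$-feasibility move, I must show that the $\theta_k$-contraction in $p$ dominates the $O(\eta_k)$ subgradient drift, the cubic error term, and the $O(\eta_k^2+\theta_k^2)$ curvature \emph{simultaneously and uniformly in $k$}. This is exactly where the four requirements in Assumption \ref{Assumption_alg_smooth}(2) and the precise thresholds in \eqref{Eq_Cond_Stepsizes} enter; getting the constants to line up so that the residual is \emph{trapped} at the $\tfrac{\eta_k}{\theta_k}$ scale (rather than merely kept bounded) is the delicate part. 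Because $f$ need not be Clarke-regular and the momentum term couples successive steps, I cannot appeal to a monotone Lyapunov decrease and must argue the invariance of $\Omega$ directly from the residual recursion.
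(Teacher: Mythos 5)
Your proposal follows essentially the same route as the paper's proof: a one-step descent recursion for $p(x,y)=\tfrac12\norm{\nabla_y g(x,y)}^2$ obtained from its $M_p$-smoothness, a case analysis showing the residual strictly decreases above a threshold proportional to $\eta_k/\theta_k$ and cannot jump past $\tfrac{18L_f}{\sigma}\sup_k\tfrac{\eta_k}{\theta_k}$ from below it, an induction giving forward invariance of $\Omega$, and the error bound $\mathrm{dist}(\cdot,\M)\le\tfrac{2}{\sigma}\norm{\nabla_y g}$ to convert to the asserted distance bound. The only cosmetic difference is that you bound the subgradient drift directly as $O(\eta_k\norm{\nabla_y g})$ where the paper absorbs it via Young's inequality into an $O(\eta_k^2/\theta_k)$ term; both yield the same $\eta_k/\theta_k$ equilibrium scale, and your explicit remark about the transient when $\norm{\nabla_y g(x_0,y_0)}$ starts above that level mirrors the same (implicit) base-case treatment in the paper's own argument.
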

\begin{proof}
    For notational convenience, we let $r= \sup_{k\geq 0}\frac{\eta_k}{\theta_k}$.
    Firstly, from the fact that 
    $
    r \leq \frac{\sigma^2 \rho}{128L_fM_g}$ in \eqref{Eq_Cond_Stepsizes}, we can conclude that 
    $\frac{36 L_f}{\sigma^2}
    r \leq \frac{\rho}{2M_g}$. Therefore, it follows from the definition of $\Omega$ that 
    \begin{equation*}
        \left\{(x, y): \norm{x} + \norm{y} \leq M_{\mathrm{iter}},~ \mathrm{dist}\left((x, y), \M \right) \leq \frac{36 L_f}{\sigma^2} 
        r \right\} \subseteq \Omega.
    \end{equation*}

    Let $d_k = (d_{x, k}, d_{y, k})$, $\mk = (m_{x, k}, m_{y, k})$, $u_k = (u_{x, k}, u_{y, k})$ and $e_k = (e_{x, k}, e_{y, k})$. From the update scheme of $\{\mk\}$, it holds for any $k \geq 0$ that 
    \begin{equation} \label{eq-m}
        \norm{\mkp} = \norm{\alpha^{k+1} m_0 +  (1-\alpha)\sum_{i = 0}^k \alpha^{k-i} d_i} \leq \max\left\{\sup_{0\leq i\leq k} \norm{d_i}, \norm{m_0} \right\}\leq L_f. 
    \end{equation}
    Then from the update scheme of \eqref{Eq_Subroutine_manifold}, it holds that 
    \begin{small}
        \begin{equation}
        \label{Eq_Prop_restrict_manifold_joint_nondeg_0}
        \begin{aligned}
            &p(\xkp, \ykp) - p(\xk, \yk) \\
            \leq{}& \inner{\nabla p(\xk, \yk), -\eta_k \mkp - \theta_k  (u_k + e_k)} + \frac{M_p}{2} \norm{\eta_k \mkp + \theta_k  (u_k + e_k)}^2\\
            \leq{}& \inner{\nabla p(\xk, \yk), -\eta_k \mkp - \theta_k  (u_k + e_k)} + M_p \eta_k^2 \norm{\mkp}^2 + 2M_p \theta_k^2 \norm{u_k}^2 + 2M_p \theta_k^2 \norm{e_k}^2
            \\
            \leq{}& \inner{\nabla p(\xk, \yk),  - \theta_k  u_k} + \left(M_p \eta_k^2 + \frac{16\eta_k^2}{\theta_k}\right) \norm{\mkp}^2 \\
            &+ \left(2M_p \theta_k^2 +16 \theta_k\right)\norm{e_k}^2 + \frac{\theta_k}{8} \norm{\nabla p(\xk, \yk)}^2
            \\
            \leq{}&- \frac{7\theta_k}{8} \norm{\nabla p(\xk, \yk)}^2 
            +\frac{18\eta_k^2}{ \theta_k}  L_f^2+ 18\theta_k M_e^2 \norm{\nabla_y g(\xk, \yk)}^4 
            \\
            \leq{}& - \frac{7\sigma^2 \theta_k}{16} p(\xk, \yk) + \frac{18\eta_k^2}{ \theta_k}  L_f^2 + 72 \theta_k M_e^2 p(\xk, \yk)^2 \leq -\frac{\sigma^2\theta_k}{4} p(\xk, \yk) + \frac{18\eta_k^2}{\theta_k}  L_f^2. 
        \end{aligned}
    \end{equation}
    \end{small}
    Here the third inequality uses the following inequalities, 
    \begin{equation*}
        \begin{aligned}
            &\inner{\nabla p(\xk, \yk),  - \eta_k \mkp} \leq \frac{16\eta_k^2}{\theta_k} \norm{\mkp}^2 + \frac{\theta_k}{16} \norm{\nabla p(\xk, \yk)}^2,\\
            &\inner{\nabla p(\xk, \yk),  -  \theta_k e_k} \leq \frac{\theta_k}{16} \norm{\nabla p(\xk, \yk)}^2 + 16 \theta_k \norm{e_k}^2. 
        \end{aligned}
    \end{equation*}
    In addition, the fourth inequality follows from Assumption \ref{Assumption_alg_smooth} and the fact that $\theta_k \leq \frac{1}{M_p}$, and the fifth inequality follows from the fact that 
    \begin{equation*}
        \norm{\nabla p(x, y)}^2 = \norm{J_g(x, y) \nabla_y g(x, y)}^2 \geq \frac{\sigma^2}{4} \norm{\nabla_y g(x, y)}^2 = \frac{\sigma^2}{2} p(x, y),
    \end{equation*}
    for any $(x, y) \in \Omegajoint$. The last inequality follows from the choice of $\Omega$, which leads to 
    \begin{equation}
        \label{Eq_Prop_restrict_manifold_joint_nondeg_3}
        \norm{\nabla_y g(\xk, \yk)} \leq M_g \mathrm{dist}((\xk, \yk), \M) \leq \rho \leq \frac{\sigma}{16 M_e}, 
    \end{equation}
    and  it implies that $72  M_e^2 p(\xk, \yk) \leq \frac{3\sigma^2}{16} $.

    For any $(\xk, \yk)$ such that $\norm{\nabla_y g(\xk, \yk)} \geq  \frac{12 L_f}{\sigma} 
    r$, it follows from \eqref{Eq_Prop_restrict_manifold_joint_nondeg_0} that 
    \begin{equation}
        \label{Eq_Prop_restrict_manifold_joint_nondeg_1}
        \norm{\nabla_y g(\xkp, \ykp)} \leq \norm{\nabla_y g(\xk, \yk)}.
    \end{equation}
    On the other hand, for any $(\xk, \yk)$ such that $\norm{\nabla_y g(\xk, \yk)} \leq \frac{12 L_f}{\sigma} 
    r$, it holds  that $\mathrm{dist}((\xk, \yk), \M) \leq \frac{2}{\sigma} \norm{\nabla_y g(\xk, \yk)} \leq \frac{24 L_f}{\sigma^2} r$.
    
    Furthermore, from \revise{\eqref{Eq_Prop_restrict_manifold_joint_nondeg_3} we have that $\norm{\nabla_y g(\xk, \yk)}^2 \leq \rho^2$. As a result, it follows from the update scheme \eqref{Eq_Subroutine_manifold} and Assumption \ref{Assumption_alg_smooth}(3) that  }
    \begin{equation*}
        \label{Eq_Prop_restrict_manifold_joint_nondeg_2}
            \begin{aligned}
                &\norm{(\xkp, \ykp) - (\xk, \yk)} \leq  \norm{\eta_k \mkp + \theta_k (u_k + e_k)} 
                \\
                \leq{}& \norm{\mkp}\eta_k + (\norm{u_k} + \norm{e_k})\theta_k 
                \;\leq{}\; L_f \eta_k  + \left(L_p + M_e\rho^2 \right) \theta_k. 
            \end{aligned}
    \end{equation*}
    Therefore, from the Lipschitz continuity of $\nabla g$, it holds that 
    \begin{equation*}
        \begin{aligned}
            &\norm{\nabla_y g(\xkp, \ykp)} \leq \norm{\nabla_y g(\xk, \yk)} + M_g \left( L_f \eta_k  + \left(L_p + \revise{M_e\rho^2} \right) \theta_k  \right) 
            \\
            \leq{}& \norm{\nabla_y g(\xk, \yk)} + \left(M_g L_f \sup_{k\geq 0} \theta_k + M_g (L_p + M_e\rho^2) \sup_{k\geq 0} \frac{\theta_k^2}{\eta_k} \right) r \\
            \leq{}& \norm{\nabla_y g(\xk, \yk)} + \frac{6 L_f}{\sigma}
            r \leq  \frac{18 L_f}{\sigma}
            r. 
        \end{aligned}
    \end{equation*}

    Now we are ready to prove that $\norm{\nabla_y g(\xk, \yk)} \leq \frac{18 L_f}{\sigma}
    r $ holds for any $k\geq 0$. We prove this statement by contradiction. That is, we assume that there exists an index $i \geq 0$ such that $\norm{\nabla_y g(x_{i+1}, y_{i+1})} > \frac{18 L_f}{\sigma} 
    r$ and $\norm{\nabla_y g(x_{i}, y_{i})} \leq \frac{18 L_f}{\sigma}
    r$. 
    Then  \eqref{Eq_Prop_restrict_manifold_joint_nondeg_1} implies that $\norm{\nabla_y g(x_{i+1}, y_{i+1})} \leq \norm{\nabla_y g(x_{i}, y_{i})} \leq \frac{18 L_f}{\sigma} r$, which leads to a contradiction. 
    Therefore, we can conclude that $\norm{\nabla_y g(\xk, \yk)} \leq \frac{18 L_f}{\sigma} r$ holds for any $k\geq 0$. 
    As a result, it follows from Lemma \ref{Le_welldef_penalty_joint_nondeg} that 
    \begin{equation*}
        \sup_{k\geq 0} \mathrm{dist}((\xk, \yk), \M) \leq \frac{2}{\sigma} \sup_{k\geq 0} \norm{\nabla_y g(\xk, \yk)} \leq \frac{36 L_f}{\sigma^2} 
        r.
    \end{equation*}
    
    Moreover, the condition $\sup_{k\geq 0}\frac{\eta_k}{\theta_k} \leq \frac{\sigma^2 \rho}{128L_fM_g}$ in \eqref{Eq_Cond_Stepsizes} implies that $\frac{36 L_f}{\sigma^2}\sup_{k\geq 0}\frac{\eta_k}{\theta_k} \leq \frac{\rho}{2M_g}$. This illustrates that $\{(x, y): \norm{x} + \norm{y} \leq M_{\mathrm{iter}},~ \mathrm{dist}\left((x, y), \M \right) \leq \frac{36 L_f}{\sigma^2}\sup_{k\geq 0}\frac{\eta_k}{\theta_k}\} \subseteq \Omega$. 
    This completes the proof.  
\end{proof}
}


Then we have the following lemma illustrating that the sequence $\{(\xk, \yk)\}$ sequentially converges towards the feasible region $\M$. 
\begin{lem}
    \label{Le_esti_gy_joint_nondeg}
    Suppose Assumption \ref{Assumption_joint_nondegeneracy}, Assumption \ref{Assumption_f}, and Assumption \ref{Assumption_alg_smooth} hold. For any sequence $\{(\xk, \yk)\}$ generated by \eqref{Eq_Subroutine_manifold} with $(x_0, y_0) \in \Omegajoint$, $\norm{m_0} \leq L_f$, and the sequences of stepsizes $\{\eta_k\}$ and $\{\theta_k\}$ satisfy \eqref{Eq_Cond_Stepsizes}, 
    it holds that $\lim_{k\to +\infty} \nabla_y g(\xk, \yk) = 0$. 
\end{lem}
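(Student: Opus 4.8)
The plan is to reduce the statement to the convergence of the scalar sequence $p_k := p(\xk, \yk) = \tfrac{1}{2}\norm{\nabla_y g(\xk,\yk)}^2$, since $\lim_{k\to\infty}\nabla_y g(\xk,\yk) = 0$ is equivalent to $\lim_{k\to\infty} p_k = 0$. First I would invoke Proposition \ref{Prop_restrict_manifold_joint_nondeg_New} to guarantee that the entire sequence $\{(\xk,\yk)\}$ stays inside $\Omega$, so that the one-step descent estimate \eqref{Eq_Prop_restrict_manifold_joint_nondeg_0} derived there (which relied on $(\xk,\yk)\in\Omega$ via Lemma \ref{Le_welldef_penalty_joint_nondeg}) is valid at every index. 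This yields, for all $k\geq 0$, the recursion
\begin{equation*}
    p_{k+1} \leq \Big(1 - \tfrac{\sigma^2\theta_k}{4}\Big) p_k + \frac{18 L_f^2 \eta_k^2}{\theta_k}.
\end{equation*}

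Next I would rewrite this in the standard form of a contraction with vanishing noise. Setting $\gamma_k := \tfrac{\sigma^2\theta_k}{4}$ and $b_k := \tfrac{72 L_f^2}{\sigma^2}\cdot \tfrac{\eta_k^2}{\theta_k^2}$, the recursion becomes $p_{k+1} \leq (1-\gamma_k)p_k + \gamma_k b_k$. I then verify the three ingredients needed for convergence: (i) $\gamma_k \in [0,1]$ for all $k$, which follows from $\sigma \leq 1$ together with the bound on $\sup_{k\geq 0}\theta_k$ in \eqref{Eq_Cond_Stepsizes}; (ii) $\sum_{k\geq 0} \gamma_k = +\infty$, which I obtain from $\sum_{k\geq 0}\eta_k = +\infty$ and $\eta_k/\theta_k\to 0$ in Assumption \ref{Assumption_alg_smooth}(2) (the latter forces $\theta_k \geq \eta_k$ for all large $k$, hence $\sum_{k\geq 0}\theta_k = +\infty$); and (iii) $b_k \to 0$, which is immediate from $\eta_k/\theta_k \to 0$.

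Finally I would close the argument with the standard $\varepsilon$-trick for such recursions. Fixing $\varepsilon>0$, I pick $K$ with $b_k \leq \varepsilon$ for all $k\geq K$, set $q_k := p_k - \varepsilon$, and observe that $q_{k+1}\leq (1-\gamma_k)q_k$ for $k\geq K$. Iterating and using $1-\gamma_j \leq e^{-\gamma_j}$ gives $q_k \leq \max\{q_K,0\}\,\exp\big(-\sum_{j=K}^{k-1}\gamma_j\big) \to 0$, so that $\limsup_{k\to\infty} p_k \leq \varepsilon$. Letting $\varepsilon\downarrow 0$ and using $p_k \geq 0$ yields $p_k\to 0$, i.e., $\lim_{k\to\infty}\nabla_y g(\xk,\yk)=0$.

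I expect the only genuine subtlety to be the bookkeeping around the noise term: one must divide through by the contraction coefficient $\gamma_k$, which is of order $\theta_k$, so that the effective per-step noise is $\eta_k^2/\theta_k^2$ rather than $\eta_k^2/\theta_k$. It is precisely the two-timescale condition $\eta_k/\theta_k\to 0$ (rather than mere summability of $\eta_k^2/\theta_k$) that makes this effective noise vanish and drives $p_k$ to zero; the remaining steps are routine given the descent inequality already in hand from Proposition \ref{Prop_restrict_manifold_joint_nondeg_New}.
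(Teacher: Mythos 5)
Your proof is correct, but it takes a genuinely different route from the paper's. Both arguments rest on the same one-step descent estimate \eqref{Eq_Prop_restrict_manifold_joint_nondeg_0}, whose validity at every index is guaranteed by Proposition \ref{Prop_restrict_manifold_joint_nondeg_New} keeping the iterates inside $\Omega$ (so that Lemma \ref{Le_welldef_penalty_joint_nondeg} and the bound $\norm{\nabla_y g(\xk,\yk)}\le\rho$ apply). From there the paper proceeds in two stages: it first extracts $\liminf_{k}\norm{\nabla_y g(\xk,\yk)}=0$ by telescoping, and then rules out a nonzero cluster value by a crossing argument --- choosing index windows $[k_{i,1},k_{i,2}]$ over which the iterates travel from the level set $\{\norm{\nabla_y g}\le\varepsilon/2\}$ to $\{\norm{\nabla_y g}\ge\varepsilon\}$, lower-bounding $\sum_j\theta_j$ over each window, and showing that $p$ must strictly decrease across the crossing even though by construction it must increase. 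You instead recast the descent estimate as a contraction with vanishing relative noise, $p_{k+1}\le(1-\gamma_k)p_k+\gamma_k b_k$ with $\gamma_k=\sigma^2\theta_k/4\in[0,1]$, $\sum_k\gamma_k=+\infty$ (since $\eta_k/\theta_k\to0$ and $\sum_k\eta_k=+\infty$ force $\sum_k\theta_k=+\infty$), and $b_k=O((\eta_k/\theta_k)^2)\to0$, and close with the standard $\varepsilon$-shift (Chung-type) argument. The bookkeeping checks out: $\gamma_k b_k$ equals the paper's error term $18L_f^2\eta_k^2/\theta_k$, and the induction $q_{k+1}\le(1-\gamma_k)q_k$ with $q_k=p_k-\varepsilon$ is valid regardless of the sign of $q_k$, yielding $\limsup_k p_k\le\varepsilon$ for every $\varepsilon>0$. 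Your route is more elementary and delivers $\lim_k p_k=0$ in a single pass; the paper's crossing argument is the standard tool in ODE-method analyses and remains applicable when no clean scalar contraction is available, but here the recursion is at hand and your shortcut is a legitimate and cleaner substitute.
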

\begin{proof}
    From the telescope sum of \eqref{Eq_Prop_restrict_manifold_joint_nondeg_0}, and the uniform boundedness of $\{(\xk, \yk)\}$ in Assumption \ref{Assumption_alg_smooth}(1), we can conclude that $\liminf_{k\to +\infty} \norm{\nabla_y g(\xk, \yk)} = 0$.
    
    We aim to prove this lemma by contradiction. That is, we assume that there exists a cluster point 
    $(\tilde{x}, \tilde{y})$ of $\{(x_k,y_k)\}$ such that $\nabla_y g(\tilde{x}, \tilde{y}) \neq 0$. Hence there exists $\varepsilon \in (0, \rho)$ such that $\norm{\nabla_y g(\tilde{x}, \tilde{y})} > \varepsilon$. Then we can choose two sequences of indices $\{k_{i,1}\}$ and $\{k_{i, 2}\}$ such that the following inequalities hold for any $i\geq 0$, 
    \begin{equation}
        \label{Eq_Le_esti_gy_joint_nondeg_1}
        \begin{aligned}
        &\norm{\nabla_y g(x_{k_{i,1}-1}, y_{k_{i,1}-1})} <\frac{\varepsilon}{2}, \quad \norm{\nabla_y g(x_{k_{i,2}+1}, y_{k_{i,2}+1})} > \varepsilon, \\     
        &\left\{(\xk, \yk): k_{i,1} \leq k\leq k_{i,2} \right\} \subset \left\{(x, y) \in \Omegajoint: \frac{\varepsilon}{2} \leq \norm{\nabla_y g(x, y)} \leq \varepsilon \right\}.
        \end{aligned}
    \end{equation}
    Note that from the choices of $\{k_{i,1}\}$ and $\{k_{i, 2}\}$, it holds that the sequence of iterates $\left\{(\xk, \yk): k_{i,1} \leq k\leq k_{i,2} \right\}$ travels from the boundary of $\left\{(x, y) \in \Omegajoint: \norm{\nabla_y g(x, y)} \leq \frac{\varepsilon}{2} \right\}$ to that of $\left\{(x, y) \in \Omegajoint: \norm{\nabla_y g(x, y)} \geq  \varepsilon \right\}$.

    Together with Assumption \ref{Assumption_alg_smooth}(2),  we can choose $K > 0$ such that $\sup_{k\geq K} \eta_k \leq \frac{\varepsilon}{18 M_g(L_f + M_gL_g)}$ and $\sup_{k\geq K} \frac{\eta_k}{\theta_k} \leq \frac{\sigma^2 \varepsilon}{36 L_f}$. 
    By Assumption \ref{Assumption_alg_smooth}(2) again, for any $i \geq 0$ such that $K\leq k_{i,1}$, it holds that $\sum_{j = k_{i, 1}}^{k_{i, 2}} \theta_j \geq \frac{\varepsilon}{4M_g(L_f + M_gL_g)}$. 
    As a result, for any $i \geq 0$ such that $K\leq k_{i,1}$, it follows from \eqref{Eq_Prop_restrict_manifold_joint_nondeg_0} that 
    \begin{equation}
        \label{Eq_Le_esti_gy_joint_nondeg_2}
        \begin{aligned}
            & p(x_{k_{i,2}+1}, y_{k_{i,2}+1}) - p(x_{k_{i,1}}, y_{k_{i,1}}) \leq   \sum_{k = k_{i,1} }^{k_{i,2}}
            \left(-\frac{\sigma^2\theta_k}{4} p(\xk, \yk) + \frac{18\eta_k^2}{\theta_k}  L_f^2\right) \\
            \leq{}& \sum_{k = k_{i,1} }^{k_{i,2}} -\frac{\sigma^2\theta_k}{8} p(\xk, \yk)  \leq  -\frac{\varepsilon^3 \sigma^2}{1024 M_g (L_f + M_gL_g)} < 0. 
        \end{aligned}
    \end{equation}
    Here the second inequality  follows from \eqref{Eq_Le_esti_gy_joint_nondeg_1}, which implies that $ \norm{\nabla_y g(\xk, \yk)} \geq \frac{\varepsilon}{2}$ holds for  any \revise{$k_{i,1}\leq k\leq k_{i,2}$} and any $i\geq 0$. As a result, for any $i\geq 0$ and any \revise{$k_{i,1}\leq k\leq k_{i,2}$}, we can conclude that $p(\xk, \yk) \geq \frac{\varepsilon^2}{8} \geq \frac{144L_f^2}{\sigma^2} \cdot \frac{\eta_k^2}{\theta_k^2}$ and thus $\frac{18\eta_k^2}{\theta_k}  L_f^2 \leq \frac{\sigma^2\theta_k}{8} p(\xk, \yk)$.
    Here we have the contradiction between \eqref{Eq_Le_esti_gy_joint_nondeg_1} and \eqref{Eq_Le_esti_gy_joint_nondeg_2}. As a result, our assumption on the infeasibility of $(\tilde{x}, \tilde{y})$ is invalid, hence any  cluster point $(\tilde{x}, \tilde{y})$ of $\{(\xk, \yk)\}$ satisfies $\nabla_y g(\tilde{x}, \tilde{y}) = 0$. This completes the proof. 
\end{proof}

In the following proposition, we show that the sequence of momentum terms $\{\mk\}$ can be regarded as approximated evaluations of $\{\D_f(\xk, \yk)\}$. 
\begin{prop}
		\label{Prop_dk_esti_UB}
		Suppose Assumption \ref{Assumption_joint_nondegeneracy}, Assumption \ref{Assumption_f}, and Assumption \ref{Assumption_alg_smooth}  hold. \revise{Then for any sequences of stepsizes $\{\eta_k\}$ and $\{\theta_k\}$ satisfy \eqref{Eq_Cond_Stepsizes}, there exists a positive sequence $\{\delta^{\star}_k\}$ such that $\lim_{k\to +\infty} \delta^{\star}_k = 0$, and $(1-\alpha) \sum_{i = 0}^k \alpha^{k-i}(d_{x, i}, d_{y, i}) \in  \conv\left(\D_f^{\delta^{\star}_k}(\xk, \yk) \right)$ holds for any $k\geq 0$.}
	\end{prop}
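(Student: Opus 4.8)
The plan is to exploit the geometrically decaying weights of the heavy-ball recursion together with the vanishing per-step displacement of the iterates. Writing $d_i := (d_{x,i}, d_{y,i})$ and $S_k := (1-\alpha)\sum_{i=0}^k \alpha^{k-i} d_i$, the weights $w_i^k := (1-\alpha)\alpha^{k-i}$ are nonnegative and satisfy $\sum_{i=0}^k w_i^k = 1-\alpha^{k+1}$, so $S_k$ is a sub-convex combination of the past subgradient samples $d_i \in \D_f(x_i, y_i)$. Since $\sup_{k}\norm{\xk}+\norm{\yk}\le M_{iter}$ by Assumption \ref{Assumption_alg_smooth}(1), local boundedness of $\D_f$ gives $\norm{d_i}\le L_f$ for all $i$, while the update rule of \eqref{Eq_Subroutine_manifold} together with \eqref{eq-m} yields the per-step displacement bound $\norm{(x_{j+1},y_{j+1})-(x_j,y_j)} \le \epsilon_j := L_f\eta_j + (L_p + M_e L_g^2)\theta_j$. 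Assumption \ref{Assumption_alg_smooth}(2) forces $\eta_j,\theta_j\to0$, hence $\epsilon_j\to0$; this is the only regularity of the iterates I will use.

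Next I would split the sum at a window length $N_k$ (to be chosen), separating the ``recent'' block $k-N_k < i \le k$ from the ``old'' block $i \le k-N_k$. The recent block carries weight $1-\alpha^{N_k}$ while the old block carries weight at most $\alpha^{N_k}$. Fixing the reference subgradient $d_k\in\D_f(\xk,\yk)$, I define the genuine convex combination
\[
C_k := \sum_{i=k-N_k+1}^{k} w_i^k\, d_i + \alpha^{N_k} d_k,
\]
whose weights sum to $(1-\alpha^{N_k})+\alpha^{N_k}=1$, and set $E_k := S_k - C_k = \sum_{i\le k-N_k} w_i^k d_i - \alpha^{N_k} d_k$, which is controlled by the old-block weight, $\norm{E_k}\le 2\alpha^{N_k}L_f$. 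Each recent-block sample obeys $d_i\in\D_f(x_i,y_i)$ with $\norm{(x_i,y_i)-(\xk,\yk)}\le \delta_k := \sum_{j=k-N_k+1}^{k-1}\epsilon_j$, so $d_i \in \D_f^{\delta_k}(\xk,\yk)$ (taking zero additive perturbation), and likewise $d_k\in\D_f^{\delta_k}(\xk,\yk)$; hence $C_k\in\conv(\D_f^{\delta_k}(\xk,\yk))$. Writing $C_k=\sum_j\lambda_j v_j$ as such a convex combination and adding the common vector $E_k$ to every $v_j$, I obtain $S_k=C_k+E_k=\sum_j\lambda_j(v_j+E_k)$ with each $v_j+E_k\in\D_f^{\delta^\star_k}(\xk,\yk)$, where $\delta^\star_k:=\max\{\delta_k,\,2\alpha^{N_k}L_f\}$. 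This gives exactly $S_k\in\conv(\D_f^{\delta^\star_k}(\xk,\yk))$.

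It then remains to choose the window schedule so that $\delta^\star_k\to0$, which is the crux of the argument. Two competing requirements must be reconciled: $N_k\to\infty$ so that the old-block error $\alpha^{N_k}L_f$ vanishes, and $N_k$ small enough that the windowed displacement $\delta_k$ still tends to zero (note $\sum_j\eta_j=+\infty$, so a fixed proportional window such as $N_k\sim k$ would fail). The key observation is that for every \emph{fixed} window length $L$ one has $\sum_{j=k-L}^{k-1}\epsilon_j\le L\,\sup_{j\ge k-L}\epsilon_j\to0$ as $k\to\infty$, since $\epsilon_j\to0$. A standard diagonal argument then produces a nondecreasing schedule $N_k\to\infty$ with $N_k\le k$ along which $\delta_k\to0$: for each integer $m$ pick an increasing $K_m\ge m$ with $\sum_{j=k-m}^{k-1}\epsilon_j\le 1/m$ for all $k\ge K_m$, and set $N_k:=m$ on $[K_m,K_{m+1})$. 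Along this schedule both $\delta_k\to0$ and $\alpha^{N_k}\to0$, whence $\delta^\star_k\to0$ (and $\delta^\star_k>0$ whenever $\alpha>0$ and $L_f>0$, the remaining degenerate cases being trivial since then $S_k=d_k$). The main obstacle is precisely this balancing of the window length against the non-summable stepsizes; once the diagonal schedule is in place, the rest is bookkeeping.
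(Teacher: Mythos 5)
Your argument is correct, and its core is the same as the paper's: split the geometric average $S_k$ into a recent window, whose iterates are within a small distance of $(x_k,y_k)$ so that the corresponding $d_i$ lie in $\D_f^{\delta_k}(x_k,y_k)$, plus an old tail whose total weight is $O(\alpha^{N_k})$, and then balance the window length against the decay of the per-step displacement. Your displacement bound $L_f\eta_j+(L_p+M_eL_g^2)\theta_j$, the tail bound $2\alpha^{N_k}L_f$, the repair of the sub-convex combination by appending the weight-$\alpha^{N_k}$ copy of $d_k$, and the observation that $\max\{\delta_k,2\alpha^{N_k}L_f\}$ suffices (since the base-point and additive perturbations in the definition of $\D_f^{\delta}$ are bounded separately) are all sound.

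The one genuine difference is in how the window length is sent to infinity. The paper keeps the window length tied to a \emph{fixed} constant $C$ (via the time-window $T_k=\Lambda(\lambda(k)-\tilde{\delta}_k)$), obtains only the bound $\mathrm{dist}\bigl(S_k,\conv(\D_f^{\tilde{\delta}_k}(x_k,y_k))\bigr)\le 2\alpha^{C-1}L_f$, which does \emph{not} vanish in $k$ for fixed $C$, then appeals to the arbitrariness of $C$ to conclude the distance tends to zero, and finally needs Lemma \ref{Le_approximate_evaluation} to upgrade ``vanishing distance to the convex hull'' into membership in $\conv(\D_f^{\delta^\star_k}(x_k,y_k))$ for a new diminishing sequence. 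You instead build a single nondecreasing schedule $N_k\to\infty$ by diagonalizing over fixed window lengths (choosing $K_m$ so that the $m$-term windowed displacement is below $1/m$ beyond $K_m$), which yields exact membership directly and makes Lemma \ref{Le_approximate_evaluation} unnecessary. This is a cleaner and more self-contained route to the same conclusion; the paper's version mainly buys reuse of machinery ($\lambda$, $\Lambda$, Lemma \ref{Le_approximate_evaluation}) already introduced for the ODE analysis. No gap.
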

	\begin{proof}

            We first show that there exists a nonnegative sequence $\{\tilde{\delta}_k\}$ such that $\lim_{k\to +\infty} \tilde{\delta}_k = 0$, and $\lim_{k\to +\infty}\mathrm{dist}\Big( (1-\alpha) \sum_{i = 0}^k \alpha^{k-i}(d_{x, i}, d_{y, i}),  \conv\left(\D_f^{\tilde{\delta}_k}(\xk, \yk) \right)\Big) =0$. 
        
		For any $C > 0$, let 
        \revise{$\hat{\delta}_k := \sum_{i= \max\{0, k-C\}}^k  \sqrt{\theta_i + \eta_i}$}, 
        $\hat{T}_k := \Lambda(\lambda(k) - \hat{\delta}_k)$, $\tilde{\delta}_k = \hat{\delta}_k + \sup_{\hat{T}_k\leq i\leq k} \delta_i$, and $T_k := \Lambda(\lambda(k) - \tilde{\delta}_k)$. Then from the facts that $\lim_{k\to +\infty} \eta_k = 0$, $\lim_{k\to +\infty} \theta_k = 0$ and $\lim_{k\to +\infty}\frac{\eta_k}{\theta_k} = 0$, it holds that $\lim_{k\to +\infty} \hat{\delta}_k = 0$ \revise{and $\sup_{k\geq 0} \hat{\delta}_k \leq C\sup_{k\geq 0} \sqrt{\eta_k + \theta_k}$.}
        As a result, we can conclude that $T_k \to +\infty$, $\tilde{\delta}_k \to 0$, and $\sup_{k\geq 0} \tilde{\delta}_k \leq 2C\sup_{k\geq 0} \sqrt{\eta_k + \theta_k}$. Moreover, it follows from the definition of $T_k$ that $k-T_k \geq C$ holds for any $k\geq 0$.

		Notice that $(d_{x, i}, d_{y, i}) \in \D_f^{\tilde{\delta}_k}(\xk, \yk)$ for any $i$  satisfying $T_k \leq i\leq k$. Then it holds that $(1-\alpha)\sum_{i = T_k}^k \alpha^{k-i}(d_{x, i}, d_{y, i}) \in (1 - \alpha^{k-T_k +1}) \conv\left( \D_f^{\tilde{\delta}_k}(\xk, \yk) \right)$.
        
		Moreover, from the uniformly boundedness of $\{(d_{x, k}, d_{y, k})\}$, it holds that 
		\begin{equation*}
			\begin{aligned}
			    &(1-\alpha)\norm{\left(  \sum_{i = 0}^k \alpha^{k-i}(d_{x, i}, d_{y, i})  \right) - \left( \sum_{i = T_k}^k \alpha^{k-i}(d_{x, i}, d_{y, i}) \right)}
                \;\leq\; \alpha^{k-T_k} \sup_{k\geq 0} \norm{(d_{x, k}, d_{y, k})}. 
			\end{aligned}
		\end{equation*}
		As a result, it holds for any $k \geq C$ that  
		\begin{equation}
            \label{Eq_Prop_dk_esti_UB_0}
			\begin{aligned}
				&\mathrm{dist}\Big((1-\alpha) \sum_{i = 0}^k \alpha^{k-i}(d_{x, i}, d_{y, i}),  ~\conv\left( \D_f^{\tilde{\delta}_k}(\xk, \yk) \right)  \Big) \\
				\leq{}& \mathrm{dist}\Big((1-\alpha) \sum_{i = 0}^k \alpha^{k-i}(d_{x, i}, d_{y, i}), ~ (1 - \alpha^{k-T_k +1})\conv\left( \D_f^{\tilde{\delta}_k}(\xk, \yk) \right)  \Big) \\
                &  + \alpha^{k-T_k +1} \Big(\sup_{k\geq 0, ~ w \in \D_f(\xk, \yk)} \norm{w}\Big) \\
				\leq{}& \alpha^{k-T_k} \sup_{k\geq 0} \norm{(d_{x, k}, d_{y, k})} + \alpha^{k-T_k +1} \Big(\sup_{k\geq 0, w \in \D_f(\xk, \yk)} \norm{w}\Big)\leq  2\alpha^{C-1} L_f. 
			\end{aligned}
		\end{equation}
        
            \revise{From the arbitrariness of $C> 0$, it holds that 
            \begin{equation*}
                \lim_{k\to +\infty} \mathrm{dist}\Big((1-\alpha) \sum_{i = 0}^k \alpha^{k-i}(d_{x, i}, d_{y, i}),  \conv\left( \D_f^{\tilde{\delta}_k}(\xk, \yk) \right)  \Big) = 0.
            \end{equation*}
            Then together with Lemma \ref{Le_approximate_evaluation}, we complete the proof. 
            }
	\end{proof}

Next, we aim to analyze the convergence of \eqref{Eq_Subroutine_manifold} to the stationary points of \eqref{Prob_Con}, by relating the sequence of its iterates $\{(\xk, \yk)\}$ to a differential inclusion that minimizes the constraint-dissolving function $h_{\beta}(x, y)$ defined in \eqref{Eq_defin_cdf}.

The following lemma illustrates that the accumulation of $\{\theta_k \norm{\nabla_y g(\xk, \yk)}^2\}$ can be controlled by the stepsizes $\{\eta_k\}$. 
\revise{\begin{lem}
    \label{Le_accumulation_gy_joint_nondeg_New}
    Suppose Assumption \ref{Assumption_joint_nondegeneracy}, Assumption \ref{Assumption_f}, and Assumption \ref{Assumption_alg_smooth}  hold. Then for any sequence $\{(\xk, \yk)\}$ generated by \eqref{Eq_Subroutine_manifold} with $(x_0, y_0) \in \{(x, y): \norm{x} + \norm{y} \leq M_{\mathrm{iter}},~ \mathrm{dist}\left((x, y), \M \right) \leq \frac{24 L_f}{\sigma^2}\sup_{k\geq 0}\frac{\eta_k}{\theta_k}\} \subseteq \Omega$, and  $\{\eta_k\}$ and $\{\theta_k\}$  satisfy \eqref{Eq_Cond_Stepsizes}, 
    it holds  for any $T>0$ that $\limsup\limits_{s\to +\infty}  \sum_{k = s}^{\Lambda(\lambda(s) + T)} \theta_k \norm{\nabla_y g(\xk, \yk) }^2 = 0$.
\end{lem}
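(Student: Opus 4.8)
The plan is to build everything on the one-step descent inequality \eqref{Eq_Prop_restrict_manifold_joint_nondeg_0} that was derived inside $\Omega$ during the proof of Proposition \ref{Prop_restrict_manifold_joint_nondeg_New}. Since the initial point lies in the set $\{(x,y):\norm{x}+\norm{y}\leq M_{\mathrm{iter}},\ \mathrm{dist}((x,y),\M)\leq \tfrac{24L_f}{\sigma^2}\sup_{k\geq 0}\tfrac{\eta_k}{\theta_k}\}\subseteq\Omega$, Proposition \ref{Prop_restrict_manifold_joint_nondeg_New} guarantees the whole trajectory stays in $\Omega$, so \eqref{Eq_Prop_restrict_manifold_joint_nondeg_0} holds for every $k\geq 0$. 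Recalling $p(x,y)=\tfrac12\norm{\nabla_y g(x,y)}^2$, that bound rearranges into a pointwise estimate for the summand,
\[
\frac{\sigma^2 \theta_k}{4}\, p(\xk,\yk) \;\leq\; p(\xk,\yk) - p(\xkp,\ykp) + \frac{18\eta_k^2}{\theta_k}\, L_f^2 ,
\]
and the left-hand side is a fixed positive multiple of the quantity $\theta_k\norm{\nabla_y g(\xk,\yk)}^2$ that we need to accumulate.

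First I would fix $T>0$, set $N:=\Lambda(\lambda(s)+T)$, and sum the displayed inequality over $s\leq k\leq N$. On the right-hand side the descent gaps telescope to $p(x_s,y_s)-p(x_{N+1},y_{N+1})$, leaving the error accumulation $18L_f^2\sum_{k=s}^{N}\eta_k^2/\theta_k$. Thus the desired conclusion reduces to showing that both the telescoped term and the error term vanish as $s\to+\infty$. For the telescoped term I would invoke Lemma \ref{Le_esti_gy_joint_nondeg} — whose hypotheses hold here, since $(x_0,y_0)\in\Omega$, the initialization $m_0=d_0\in\D_f(x_0,y_0)$ gives $\norm{m_0}\leq L_f$, and the stepsizes satisfy \eqref{Eq_Cond_Stepsizes} — to get $\nabla_y g(\xk,\yk)\to 0$ and hence $p(\xk,\yk)\to 0$; nonnegativity of $p$ then yields $p(x_s,y_s)-p(x_{N+1},y_{N+1})\leq p(x_s,y_s)\to 0$.

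For the error term, the key is the factorization $\eta_k^2/\theta_k=\eta_k\cdot(\eta_k/\theta_k)$ combined with a window-length bound. From the definition $N=\Lambda(\lambda(s)+T)$ one has $\lambda(N)\leq\lambda(s)+T$, so $\sum_{k=s}^{N-1}\eta_k=\lambda(N)-\lambda(s)\leq T$ and therefore $\sum_{k=s}^{N}\eta_k\leq T+\sup_{k}\eta_k$. This gives
\[
\sum_{k=s}^{N}\frac{\eta_k^2}{\theta_k} \;\leq\; \Big(\sup_{k\geq s}\frac{\eta_k}{\theta_k}\Big)\sum_{k=s}^{N}\eta_k \;\leq\; \Big(\sup_{k\geq s}\frac{\eta_k}{\theta_k}\Big)\big(T+\sup_k\eta_k\big),
\]
which tends to $0$ as $s\to+\infty$ because the two-timescale condition in Assumption \ref{Assumption_alg_smooth}(2) forces $\eta_k/\theta_k\to 0$. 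Combining the two vanishing contributions gives $\limsup_{s\to+\infty}\sum_{k=s}^{\Lambda(\lambda(s)+T)}\theta_k p(\xk,\yk)=0$, and multiplying by the constant relating $p$ to $\norm{\nabla_y g}^2$ finishes the argument.

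The proof is essentially mechanical once \eqref{Eq_Prop_restrict_manifold_joint_nondeg_0} and Lemma \ref{Le_esti_gy_joint_nondeg} are available; I expect the only delicate point to be the window bound $\sum_{k=s}^{N}\eta_k\leq T+\sup_k\eta_k$, because the upper summation limit $N$ depends on $s$ through the rescaled-time map $\Lambda$. Uniformly controlling this window while simultaneously exploiting $\sup_{k\geq s}\eta_k/\theta_k\to 0$ is the single step requiring care; everything else follows from the nonnegativity of $p$ and the telescoping structure of the descent estimate.
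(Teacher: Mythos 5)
Your proposal is correct and follows essentially the same route as the paper's proof: both rearrange the one-step descent estimate \eqref{Eq_Prop_restrict_manifold_joint_nondeg_0} into a pointwise bound on $\theta_k\norm{\nabla_y g(\xk,\yk)}^2$, telescope over the window $s\leq k\leq \Lambda(\lambda(s)+T)$, kill the telescoped term via Lemma \ref{Le_esti_gy_joint_nondeg}, and kill the error term via $\eta_k^2/\theta_k=\eta_k\cdot(\eta_k/\theta_k)$ together with the window bound $\sum_k\eta_k\leq T+\sup_k\eta_k$ and $\eta_k/\theta_k\to 0$. No substantive differences.
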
}
\begin{proof}
    From the update scheme \eqref{Eq_Subroutine_manifold} and the inequality \eqref{Eq_Prop_restrict_manifold_joint_nondeg_0}, it holds for any $k\geq 0$ that $p(\xkp, \ykp) - p(\xk, \yk) \leq  -\frac{\sigma^2\theta_k}{8} \norm{\nabla_y g(\xk, \yk)}^2 + \frac{18\eta_k^2}{\theta_k}  L_f^2$. 
    Then we can estimate the upper bound for $\theta_k \norm{\nabla_y g(\xk, \yk)}^2$ by 
    \begin{equation}
        \label{Eq_Le_accumulation_gy_joint_nondeg_0}
        \theta_k \norm{\nabla_y g(\xk, \yk)}^2 \leq \frac{8}{\sigma^2} \left( p(\xk, \yk) - p(\xkp, \ykp) \right) + \frac{72 \eta_k^2L_f^2}{\theta_k\sigma^2}. 
    \end{equation}
    
    Moreover, by Lemma \ref{Le_esti_gy_joint_nondeg}, $\lim_{k\to +\infty}p(\xk, \yk) = 0$, which illustrates that 
    \begin{equation}
        \label{Eq_Le_accumulation_gy_joint_nondeg_1}
        \limsup_{s\to +\infty} \left(p(x_s, y_s) - p(x_{\Lambda(\lambda(s) + T) +1}, y_{\Lambda(\lambda(s) + T) +1})  \right) = 0 
    \end{equation}
    holds for any $T > 0$. 
    Therefore, we have 
    \begin{equation*}
         \begin{aligned}
             &0\leq \limsup_{s\to +\infty}  \sum_{k = s}^{\Lambda(\lambda(s) + T)} \theta_k \norm{\nabla_y g(\xk, \yk) }^2\\
             \leq{}& \limsup_{s\to +\infty} \sum_{k = s}^{\Lambda(\lambda(s) + T)} \left( \frac{8}{\sigma^2} \left( p(\xk, \yk) - p(\xkp, \ykp) \right) + \frac{72 L_f^2}{\sigma^2} \cdot \frac{\eta_k^2}{\theta_k} \right)\\
             \leq{}& \frac{8}{\sigma^2}\limsup_{s\to +\infty} \left(p(x_s, y_s) - p(x_{\Lambda(\lambda(s) + T) +1}, y_{\Lambda(\lambda(s) + T) +1})  \right) + \frac{72 L_f^2}{\sigma^2} \limsup_{s\to +\infty} \sum_{k = s}^{\Lambda(\lambda(s) + T)}  \frac{\eta_k^2}{\theta_k}\\
             ={}& \frac{72 L_f^2}{\sigma^2}\limsup_{s\to +\infty} \sum_{k = s}^{\Lambda(\lambda(s) + T)}  \frac{\eta_k}{\theta_k} \cdot \eta_k = 0.
         \end{aligned}
    \end{equation*}
    Here the second inequality uses \eqref{Eq_Le_accumulation_gy_joint_nondeg_0}, and the first equality directly follows from \eqref{Eq_Le_accumulation_gy_joint_nondeg_1}. Additionally, the final equality uses the fact that $\lim_{k\to +\infty} \frac{\eta_k}{\theta_k} = 0$. 
    This completes the proof. 
\end{proof}


Next we consider the auxiliary sequence $\{(\wk, \zk)\} := \{\A(\xk, \yk)\}$, and let 
\begin{eqnarray*}
    r_k &:=& \frac{1}{\theta_k^2} \Big((\wkp, \zkp) -   (\wk, \zk) + \nabla \A(\xk, \yk) (\eta_k \mkp + \theta_k u_k + \theta_k e_k ) \Big),
\\
    \xi_{k+1} &:=& \frac{1}{\eta_k}\left(\theta_k \nabla \A(\xk, \yk) u_k +\theta_k \nabla \A(\xk, \yk) e_k - \theta_k^2 r_k\right).
\end{eqnarray*}
Then from the update scheme of $\{(\xk, \yk)\}$ in \eqref{Eq_Subroutine_manifold}, we can conclude that for any $\beta > 0$, the auxiliary sequence $\{(\wk, \zk)\}$ follows the update scheme, 
\begin{equation}
    \label{Eq_update_auxiliary}
    \begin{aligned}
        &(\wkp, \zkp) = (\wk, \zk) -  \nabla \A(\xk, \yk) (\eta_k \mkp + \theta_k u_k + \theta_k e_k ) + \theta_k^2 r_k\\
        ={}& (\wk, \zk) - \eta_k \nabla \A(\xk, \yk)\mkp - \theta_k \nabla \A(\xk, \yk) u_k - \theta_k \nabla \A(\xk, \yk) e_k + \theta_k^2 r_k \\
        ={}& (\wk, \zk) - \eta_k \underbrace{\left(\nabla \A(\xk, \yk)\mkp + \beta \nabla p(\xk, \yk)\right)}_{\text{estimations for $\D_{h_{\beta}}(\xk, \yk)$}} -\eta_k\underbrace{ \left(\xi_{k+1} -  \beta \nabla p(\xk, \yk)\right)}_{\text{error terms}}. 
    \end{aligned}
\end{equation}

In the following proposition, we show that with appropriate initialization and stepsizes, the error terms in \eqref{Eq_update_auxiliary} can be controlled by the stepsizes $\{\eta_k\}$. 
\begin{prop}
    \label{Prop_local_convergence_noise_controll}
    Suppose Assumption \ref{Assumption_joint_nondegeneracy}, Assumption \ref{Assumption_f}, and Assumption \ref{Assumption_alg_smooth} hold. For any sequence $\{(\xk, \yk)\}$ generated by \eqref{Eq_Subroutine_manifold} with $(x_0, y_0) \in \{(x, y): \norm{x} + \norm{y} \leq M_{\mathrm{iter}},~ \mathrm{dist}\left((x, y), \M \right) \leq \frac{24 L_f}{\sigma^2}\sup_{k\geq 0}\frac{\eta_k}{\theta_k}\}$, and the sequences of stepsizes $\{\eta_k\}$ and $\{\theta_k\}$  satisfy \eqref{Eq_Cond_Stepsizes}, it holds for any $T > 0$ that 
    \begin{equation*}
            \limsup_{s \to +\infty} \sup_{s \leq j \leq \Lambda(\lambda(s) + T)}\norm{ \sum_{k = s}^{j} \eta_k \left(\xi_{k+1} - \beta \nabla p(\xk, \yk)\right) } = 0.
    \end{equation*}
\end{prop}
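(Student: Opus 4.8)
The goal is to show that the error term $\xi_{k+1} - \beta \nabla p(\xk, \yk)$ accumulates to zero over windows of bounded continuous-time length $T$. Recalling the definition
$\eta_k \xi_{k+1} = \theta_k \nabla\A(\xk,\yk) u_k + \theta_k \nabla\A(\xk,\yk) e_k - \theta_k^2 r_k$, the plan is to split the sum into three pieces and to bound each separately, using the key algebraic identity $\nabla p(x,y) = J_g(x,y)\nabla_y g(x,y)$ together with the decomposition $\nabla\A = P_{\A} + R_{\A}$ from \eqref{Eq_defin_PA_RA}. The first task is to understand the \emph{leading} term $\sum_k \theta_k \nabla\A(\xk,\yk) u_k$, since $u_k$ approximates $\nabla p(\xk,\yk)$: I would write $\nabla\A\, u_k = (P_{\A}+R_{\A})\nabla p + \nabla\A(u_k - \nabla p)$, and observe that $P_{\A}(\xk,\yk)\nabla p(\xk,\yk) = 0$ (the remark after \eqref{Eq_defin_PA_RA}), so this leading term reduces, modulo $R_{\A}\nabla p$ and the evaluation error $u_k - \nabla p$, precisely to $\beta\nabla p(\xk,\yk)$ once the $\beta\nabla p$ subtraction is accounted for. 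The cleanest route is to absorb $\theta_k\beta\nabla p$ into the grouping and thereby show the whole expression $\eta_k(\xi_{k+1} - \beta\nabla p)$ equals a finite sum of terms each of which is $o(\eta_k)$ or controlled by $\theta_k\|\nabla_y g\|^2$.

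The second and central step is to bound each residual piece. For the $R_{\A}\nabla p$ contribution, Lemma \ref{Le_error_bound_cdf} gives $\norm{R_{\A}(x,y)\nabla p(x,y)} \leq \frac{8M_gM_{g,3}}{\sigma^2}\norm{\nabla_y g(x,y)}^2$, so $\sum_{k=s}^{j}\theta_k \norm{R_{\A}\nabla p}$ is controlled by $\sum_{k=s}^{j}\theta_k\norm{\nabla_y g(\xk,\yk)}^2$, which tends to zero over windows of length $T$ by Lemma \ref{Le_accumulation_gy_joint_nondeg_New}. For the evaluation-error term $\theta_k\nabla\A\, e_k$, Assumption \ref{Assumption_alg_smooth}(3) bounds $\norm{e_k}\leq M_e\norm{\nabla_y g(\xk,\yk)}^2$, so after using $\norm{\nabla\A}\leq L_A$ this piece is again dominated by $\sum_k\theta_k\norm{\nabla_y g}^2$ and handled by the same lemma. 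For the $\theta_k^2 r_k$ term, I would first produce a uniform bound $\norm{r_k}\leq C$ for some constant (using boundedness of $\mk$ by $L_f$, of $u_k$, of $e_k$, the second-order smoothness bound $M_A$ of $\A$, and a Taylor expansion of $\A$ along the step $(\xkp,\ykp)-(\xk,\yk)$); then $\sum_{k=s}^{j}\theta_k^2\norm{r_k}\leq C\sum_{k=s}^{j}\theta_k^2$, and since the window has $\lambda(j)-\lambda(s)\leq T$ while $\theta_k^2/\eta_k\to 0$, I can write $\theta_k^2 = \frac{\theta_k^2}{\eta_k}\eta_k$ and bound $\sum\theta_k^2 \leq (\sup_{k\geq s}\tfrac{\theta_k^2}{\eta_k})\sum\eta_k \leq (\sup_{k\geq s}\tfrac{\theta_k^2}{\eta_k})\,(T+\sup_k\eta_k)$, which vanishes as $s\to\infty$.

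The third step is to combine the three bounds, taking $\sup_{s\leq j\leq \Lambda(\lambda(s)+T)}$ before the $\limsup_{s\to\infty}$: the supremum over $j$ of a sum of nonnegative-norm increments is bounded by the full-window sum, so each of the three vanishing bounds above transfers to the supremum-in-$j$ statement, and their sum still vanishes as $s\to\infty$. I would organize this as
\begin{equation*}
    \norm{\sum_{k=s}^{j}\eta_k(\xi_{k+1}-\beta\nabla p)} \leq \sum_{k=s}^{j}\theta_k\norm{R_{\A}\nabla p} + L_A\sum_{k=s}^{j}\theta_k\norm{e_k} + \sum_{k=s}^{j}\theta_k^2\norm{r_k} + L_A\sum_{k=s}^{j}\theta_k\norm{u_k - \nabla p},
\end{equation*}
and then invoke the three estimates together with Remark \ref{Rmk_interpolation_nabla_p} (which also controls $u_k-\nabla p$ by $\theta_k\norm{\nabla_y g}^2$-type terms) for the final piece.

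**Main obstacle.** The delicate part is the uniform boundedness of $r_k$: $r_k$ is defined by dividing a second-order Taylor remainder of $\A$ by $\theta_k^2$, but the actual step $(\xkp,\ykp)-(\xk,\yk)$ mixes an $\eta_k$-scale momentum part with a $\theta_k$-scale penalty part. I expect the hard work to be showing that the cross terms and the $\eta_k^2$, $\eta_k\theta_k$ contributions to the remainder, after division by $\theta_k^2$, stay uniformly bounded — this is exactly where the stepsize conditions $\eta_k/\theta_k\to 0$ and $\theta_k^2/\eta_k\to 0$ in \eqref{Eq_Cond_Stepsizes} must be used carefully, and where keeping the iterates inside $\Omega$ (so that $\A$ is $C^2$ with the bounds $L_A, M_A$) is essential.
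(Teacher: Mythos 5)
Your overall route is the paper's: decompose $\eta_k\xi_{k+1}=\theta_k\nabla\A(\xk,\yk)u_k+\theta_k\nabla\A(\xk,\yk)e_k-\theta_k^2 r_k$, kill the leading term with $P_{\A}\nabla p=0$ and Lemma \ref{Le_error_bound_cdf} so that it is controlled by $\theta_k\norm{\nabla_y g(\xk,\yk)}^2$, invoke Lemma \ref{Le_accumulation_gy_joint_nondeg_New} for the windowed sums, bound $r_k$ uniformly via the second-order expansion of $\A$ and the boundedness of $\eta_k/\theta_k$, and dispose of $\sum\theta_k^2\norm{r_k}$ by writing $\theta_k^2=\frac{\theta_k^2}{\eta_k}\eta_k$. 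Two remarks before the main issue: in \eqref{Eq_Subroutine_manifold} the quantity $u_k$ is \emph{defined} to equal $\nabla p(\xk,\yk)$ exactly (the evaluation error is carried entirely by $e_k$), so your fourth term $\norm{u_k-\nabla p}$ is identically zero and Remark \ref{Rmk_interpolation_nabla_p} is not needed there.

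The genuine gap is your treatment of the subtracted term $\beta\nabla p(\xk,\yk)$. You assert that the leading term ``reduces\ldots precisely to $\beta\nabla p(\xk,\yk)$ once the $\beta\nabla p$ subtraction is accounted for'' and propose to ``absorb'' it; consistently with that belief, the term $\sum_{k=s}^{j}\eta_k\beta\nabla p(\xk,\yk)$ is simply absent from the right-hand side of your final displayed inequality, so that display does not bound the left-hand side. There is no such cancellation: since $P_{\A}\nabla p=0$, the leading term contributes only $\theta_k R_{\A}(\xk,\yk)\nabla p(\xk,\yk)$, which carries no factor of $\beta$; the $-\beta\nabla p(\xk,\yk)$ in the statement is the artificial compensator introduced when the update of $(\wk,\zk)$ is rewritten as a descent step on $\D_{h_\beta}$, and it must be estimated on its own. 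Moreover it is \emph{not} dominated by $\theta_k\norm{\nabla_y g(\xk,\yk)}^2$: one only has $\eta_k\beta\norm{\nabla p(\xk,\yk)}\leq \eta_k\beta M_g\norm{\nabla_y g(\xk,\yk)}$, which is linear rather than quadratic in $\norm{\nabla_y g}$, so your claimed dichotomy (``$o(\eta_k)$ or controlled by $\theta_k\norm{\nabla_y g}^2$'') fails for this piece unless you first establish that $\norm{\nabla_y g(\xk,\yk)}\to 0$. The paper closes this by invoking Lemma \ref{Le_esti_gy_joint_nondeg} to get $\norm{\nabla p(\xk,\yk)}\to 0$ and then using $\sum_{k=s}^{\Lambda(\lambda(s)+T)}\eta_k\leq T+1$ to conclude $\sup_{s\leq j\leq\Lambda(\lambda(s)+T)}\norm{\sum_{k=s}^{j}\eta_k\beta\nabla p(\xk,\yk)}\leq \beta(T+1)\sup_{k\geq s}\norm{\nabla p(\xk,\yk)}\to 0$. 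Adding this separate estimate (and the citation of Lemma \ref{Le_esti_gy_joint_nondeg}) repairs your argument; the remaining three pieces are handled essentially as in the paper.
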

\begin{proof}
    \revise{Firstly, from Lemma \ref{Le_welldef_penalty_joint_nondeg} and the differentiability of $\nabla_y g$, we can conclude that $\A$ is differentiable and $\nabla \A$ is locally Lipschitz continuous over $\Omega$.} Together with the uniform boundedness of $\{(\xk, \yk)\}$ and \eqref{Eq_Cond_Stepsizes}, we can conclude that 
    \begin{equation*}
        \norm{r_k} \leq 3M_{A} \left(\norm{u_k}^2 + \norm{e_k}^2 + \frac{\eta_k^2}{\theta_k^2} \norm{\mk}^2 \right) \leq 3M_A(M_g \rho + M_e \rho^2 + \rho)
    \end{equation*}
    Then with the definition of $P_{\A}$ and $R_{\A}$ in \eqref{Eq_defin_PA_RA} and Lemma \ref{Le_error_bound_cdf}, it holds that 
    \revise{
    \begin{equation*}
        \begin{aligned}
            &\norm{\nabla \A(\xk, \yk) u_k } = \norm{P_{\A}(\xk, \yk) u_k + R_{\A}(\xk, \yk)u_k} \\
            ={}& \norm{ R_{\A}(\xk, \yk)u_k} \leq \frac{8M_g M_{g, 3}}{\sigma^2}  \norm{\nabla_y g(\xk, \yk)}^2. 
        \end{aligned}
    \end{equation*}}
    Therefore, for any $k > 0$, it holds that 
    \begin{equation*}
        \eta_k \norm{\xi_{k+1}} \leq \theta_k \left(\frac{8M_g }{\sigma^2} + L_A M_e\right)\norm{\nabla_y g(\xk, \yk)}^2 + \theta_k^2  \norm{r_k}. 
    \end{equation*}
    As a result, for any $T> 0$, we have  
    \revise{\begin{equation}
        \label{Eq_Prop_local_convergence_noise_controll_0}
        \begin{aligned}
            &\limsup_{s \to +\infty} \sup_{s\leq j\leq \Lambda(\lambda(s) + T)} \norm{\sum_{k = s}^j \eta_k \xi_{k+1}} \leq \limsup_{s \to +\infty} \sum_{k = s}^{\Lambda(\lambda(s) + T)}\eta_k \norm{\xi_{k+1}} \\
            \leq{}& \limsup_{s \to +\infty} \sum_{k = s}^{\Lambda(\lambda(s) + T)}  \left(  \theta_k \left(\frac{8M_g M_{g, 3}}{\sigma^2} + L_A M_e\right)\norm{\nabla_y g(\xk, \yk)}^2 + \theta_k^2 \norm{r_k} \right)
            \;=\; 0. 
        \end{aligned}
    \end{equation}}
    Here the last equality follows from Lemma \ref{Le_accumulation_gy_joint_nondeg_New} and the facts that $\lim_{k \to +\infty} \frac{\theta_k^2}{\eta_k} = 0$ and $\lim_{k \to +\infty} \theta_k = 0$ 
    in Assumption \ref{Assumption_alg_smooth}(2). 
    
    Furthermore, Lemma \ref{Le_esti_gy_joint_nondeg} illustrates that $\lim_{k\to +\infty} \norm{\nabla_y g(\xk, \yk)} = 0$, hence $\lim_{k\to +\infty} \norm{\nabla p(\xk, \yk)} = 0$. 
    Notice that the definition of $\Lambda(s)$ and $\lambda(k)$ in Section \ref{Subsection_basic_notation} illustrates that $ \sum_{k = s}^{\Lambda(\lambda(s) + T)}  \eta_k \leq T+ \sup_{k\ge0}\eta_k \leq T+1$. 
    As a result, it holds for any $\beta > 0$ that 
    \begin{equation}
        \label{Eq_Prop_local_convergence_noise_controll_1}
        \begin{aligned}
            &\limsup_{s \to +\infty} \sup_{s\leq j\leq \Lambda(\lambda(s) + T)} \norm{\sum_{k = s}^j \eta_k \beta \nabla p(\xk, \yk)} 
            \leq{} \limsup_{s \to +\infty} \sum_{k = s}^{\Lambda(\lambda(s) + T)}  \eta_k \beta \norm{\nabla p(\xk, \yk)} \\
            {}&\leq  \limsup_{s \to +\infty}  \sup_{s\leq k\leq \Lambda(\lambda(s) + T)} \beta (T+1)\norm{\nabla p(\xk, \yk)} \\
            ={}& \limsup_{k\to +\infty} \beta (T+1)\norm{\nabla p(\xk, \yk)}= 0. 
        \end{aligned}
    \end{equation}

    As a result, by combining \eqref{Eq_Prop_local_convergence_noise_controll_0} and \eqref{Eq_Prop_local_convergence_noise_controll_1} together, we can conclude that
    \begin{equation*}
        \begin{aligned}
            &\limsup_{s \to +\infty} \sup_{s \leq j \leq \Lambda(\lambda(s) + T)}\norm{ \sum_{k = s}^{j} \eta_k \left(\xi_{k+1} - \beta \nabla p(\xk, \yk)\right) } \\
            \leq{}& \limsup_{s \to +\infty} \sup_{s\leq j\leq \Lambda(\lambda(s) + T)} \norm{\sum_{k = s}^j \eta_k \xi_{k+1}}  + \limsup_{s \to +\infty} \sup_{s\leq j\leq \Lambda(\lambda(s) + T)} \norm{\sum_{k = s}^j \eta_k \beta \nabla p(\xk, \yk)}=0. 
        \end{aligned}
    \end{equation*}
    This completes the proof. 
\end{proof}

Moreover, the following proposition illustrates that the term $\nabla \A(\xk, \yk)m_{k+1} + \beta \nabla p(\xk, \yk)$ can be viewed as an
 approximated evaluation of $\D_{h_{\beta}}(\wk, \zk)$. 
\begin{prop}
    \label{Prop_local_convergence_Dh_esti}
    Suppose Assumption \ref{Assumption_joint_nondegeneracy}, Assumption \ref{Assumption_f}, and Assumption \ref{Assumption_alg_smooth} hold. For any sequence $\{(\xk, \yk)\}$ generated by \eqref{Eq_Subroutine_manifold} with $(x_0, y_0) \in \Omegajoint$, $\norm{m_0} \leq L_f$, and any sequences of stepsizes $\{\eta_k\}$ and $\{\theta_k\}$ satisfying \eqref{Eq_Cond_Stepsizes}, there exists $\{\hat{\delta}_k\}$ such that $\lim_{k\to +\infty} \hat{\delta}_k = 0$ and 
    \begin{equation*}
        \nabla \A(\xk, \yk) \mkp + \beta \nabla p(\xk, \yk) \in \D_{h_{\beta}}^{\hat{\delta}_k}(\wk, \zk). 
    \end{equation*}
\end{prop}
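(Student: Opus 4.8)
The plan is to derive the approximate membership from a limiting, graph-closedness argument rather than an explicit per-iterate construction, since the composition $\D_f\circ\A$ in $\D_{h_{\beta}}$ makes a direct construction delicate. First I would assemble the ingredients supplied by the hypotheses. By Proposition \ref{Prop_restrict_manifold_joint_nondeg_New} the whole sequence stays in the closed, bounded set $\Omegajoint$, so $\{(\wk,\zk)\}=\{\A(\xk,\yk)\}$ is well defined and bounded; by Lemma \ref{Le_esti_gy_joint_nondeg} we have $\nabla_y g(\xk,\yk)\to 0$, and hence, using $\sigma_{\min}(J_g)\ge \tfrac{\sigma}{2}$ from Lemma \ref{Le_welldef_penalty_joint_nondeg}, the gap $\|(\wk,\zk)-(\xk,\yk)\|=\|(J_g(\xk,\yk)^{\dagger})\tp \nabla_y g(\xk,\yk)\|\le \tfrac{2}{\sigma}\|\nabla_y g(\xk,\yk)\|\to 0$. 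Moreover $\D_{h_{\beta}}$ is convex-valued and graph-closed over $\Omegajoint$ as the conservative field of $h_{\beta}$ (Lemma \ref{Le_conservative_field_joint_nondeg}), and $\|\mkp\|\le L_f$ by the momentum recursion.

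Writing $v_k:=\nabla\A(\xk,\yk)\mkp+\beta\nabla p(\xk,\yk)$, I would first reduce the claim to the statement that for every $\varepsilon>0$ there is $K_\varepsilon$ with $v_k\in\D_{h_{\beta}}^{\varepsilon}(\wk,\zk)$ for all $k\ge K_\varepsilon$; the required diminishing sequence $\{\hat\delta_k\}$ is then produced from a decreasing sequence $\varepsilon_m\downarrow 0$ by the monotone step-function construction of Lemma \ref{Le_approximate_evaluation} (exploiting $\D_{h_{\beta}}^{\varepsilon_1}\subseteq \D_{h_{\beta}}^{\varepsilon_2}$ for $\varepsilon_1\le\varepsilon_2$). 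I would establish the $\varepsilon$-claim by contradiction, assuming a subsequence along which $v_k\notin\D_{h_{\beta}}^{\varepsilon}(\wk,\zk)$. Passing to a further subsequence, boundedness gives $(\xk,\yk)\to(x^\star,y^\star)$ and $\mkp\to d^\star$; since $\nabla_y g\to 0$, the cluster point is feasible, i.e.\ $(x^\star,y^\star)\in\M$, and $(\wk,\zk)\to(x^\star,y^\star)$ by continuity of $\A$ together with $\A=\mathrm{id}$ on $\M$. From Proposition \ref{Prop_dk_esti_UB}, $\mkp\in\conv(\D_f^{\delta^\star_k}(\xk,\yk))$ with $\delta^\star_k\to 0$, so a Carathéodory decomposition combined with graph-closedness and convex-valuedness of $\D_f$ forces $d^\star\in\D_f(x^\star,y^\star)$; continuity of $\nabla\A$ and $\nabla p$ then yields $v_k\to v^\star=\nabla\A(x^\star,y^\star)d^\star+\beta\nabla p(x^\star,y^\star)$.

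The crucial point, which I expect to be the main obstacle, is reconciling the two evaluation sites: $v_k$ is built from subgradient information of $f$ sampled at the iterates $(\xk,\yk)$, while $\D_{h_{\beta}}(\wk,\zk)$ samples $\D_f$ at $\A(\wk,\zk)$, and because $\D_f$ is merely graph-closed these sets need not be comparable at any fixed scale, which is exactly why a per-$k$ base-point construction breaks down. The resolution is that every cluster point is feasible and on $\M$ the constraint-dissolving map degenerates to the identity, $\A(x^\star,y^\star)=(x^\star,y^\star)$; hence $d^\star\in\D_f(x^\star,y^\star)=\D_f(\A(x^\star,y^\star))$ and therefore $v^\star\in\D_{h_{\beta}}(x^\star,y^\star)$. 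Taking $(x^\star,y^\star)$ as an admissible base point and absorbing $\|v_k-v^\star\|$ and $\|(\wk,\zk)-(x^\star,y^\star)\|$ into $\varepsilon$, graph-closedness of $\D_{h_{\beta}}$ forces $v_k\in\D_{h_{\beta}}^{\varepsilon}(\wk,\zk)$ for all large $k$, contradicting the assumption and completing the argument.
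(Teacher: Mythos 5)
Your argument is correct, but it takes a genuinely different route from the paper's. The paper's proof is a direct per-iterate construction: it writes $\D_{h_{\beta}}(\wk,\zk)=\nabla\A(\wk,\zk)\D_f(\A(\wk,\zk))+\beta\nabla p(\wk,\zk)$ and bounds $\mathrm{dist}\bigl(\nabla\A(\xk,\yk)\mkp+\beta\nabla p(\xk,\yk),\,\D_{h_{\beta}}(\wk,\zk)\bigr)$ explicitly by $L_f\norm{\nabla\A(\wk,\zk)-\nabla\A(\xk,\yk)}+\norm{\nabla\A(\wk,\zk)}\delta_k^{\star}+\beta(\norm{\nabla p(\xk,\yk)}+\norm{\nabla p(\wk,\zk)})$, then defines $\hat\delta_k$ to be exactly this quantity, which vanishes by Lemma \ref{Le_esti_gy_joint_nondeg} and Proposition \ref{Prop_dk_esti_UB}. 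You instead prove the equivalent ``for every $\varepsilon>0$, eventually $v_k\in\D_{h_{\beta}}^{\varepsilon}(\wk,\zk)$'' statement by a compactness-and-contradiction argument, passing to a convergent subsequence whose limit lies on $\M$ (where $\A=\mathrm{id}$) and invoking graph-closedness of $\D_f$. Both are valid; note, however, that your stated motivation is somewhat overstated: a per-$k$ base-point construction does not ``break down,'' precisely because the enlargement $\D_f^{\delta}$ is defined as a union over a $\delta$-ball of base points and so absorbs the shift between $(\xk,\yk)$ and $\A(\wk,\zk)$, which is what the paper exploits. What the paper's constructive route buys is an explicit formula for $\hat\delta_k$ in terms of $\norm{\nabla_y g(\xk,\yk)}$ and $\delta_k^{\star}$; this quantitative control is reused in Appendix \ref{Appendix_Global_Stability} (Proposition \ref{Prop_local_convergence_Dh_esti_New}) to bound $\hat\delta_k$ by the stepsizes, which your purely qualitative limiting argument would not deliver. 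What your route buys is robustness: it needs only graph-closedness, local boundedness, and the feasibility of cluster points, with no Lipschitz estimates on $\nabla\A$ or $\nabla p$.
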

\begin{proof}
    From Lemma \ref{Le_esti_gy_joint_nondeg}, we have that $\lim_{k\to +\infty} \norm{\nabla_y g(\xk, \yk)} = 0$, hence 
    \begin{equation}
        \label{Eq_Prop_local_convergence_Dh_esti_0}
        \lim_{k\to +\infty} \norm{\nabla p(\xk, \yk)} = 0, \quad \lim_{k\to +\infty} \norm{(\xk, \yk) - (\wk, \zk)} = 0. 
    \end{equation}
    Moreover, Proposition \ref{Prop_dk_esti_UB} illustrates that there exists $\{\delta_k^{\star}\}$ such that $\lim_{k\to +\infty} \delta_k^{\star} = 0$ and $\mkp \in \D_{f}^{\delta_k^\star} (\A(\wk, \zk))$. Notice that $\D_{h_{\beta}}(\wk, \zk) = \nabla \A(\wk,\zk) \D_f(\A(\wk, \zk)) + \beta \nabla p(\wk, \zk)$, we have
    \begin{small}
    \begin{equation*}
        \begin{aligned}
            &\mathrm{dist}\left( \nabla \A(\xk, \yk) \mkp + \beta \nabla p(\xk, \yk), \D_{h_{\beta}}(\wk, \zk) \right)\\
            \leq{}& \mathrm{dist}\left( \nabla \A(\xk, \yk) \mkp, \nabla \A(\wk,\zk)\D_f(\A(\wk, \zk)) \right) + \beta \left( \norm{\nabla p(\xk, \yk)} +  \norm{\nabla p(\wk, \zk)} \right)\\
            \leq{}& 
            \left\| \nabla \A(\xk, \yk) \mkp- \nabla \A(\wk,\zk)\mkp \right\| + \norm{\nabla \A(\wk, \zk)}\delta_k^\star \\
            &+ \beta \left( \norm{\nabla p(\xk, \yk)} +  \norm{\nabla p(\wk, \zk)} \right)
            \\
            \leq{}& L_f\norm{\nabla \A(\wk, \zk) - \nabla \A(\xk, \yk)} + \norm{\nabla \A(\wk, \zk)}\delta_k^\star + \beta \left( \norm{\nabla p(\xk, \yk)} +  \norm{\nabla p(\wk, \zk)} \right).
        \end{aligned}
    \end{equation*}
    \end{small}
    As a result, by choosing 
    \begin{equation*}
        \hat{\delta}_k = L_f\norm{\nabla \A(\wk, \zk) - \nabla \A(\xk, \yk)} + \norm{\nabla \A(\wk, \zk)}\delta_k^\star + \beta \left( \norm{\nabla p(\xk, \yk)} +  \norm{\nabla p(\wk, \zk)} \right),
    \end{equation*}
    it holds that 
    \begin{equation*}
        \nabla \A(\xk, \yk) \mkp + \beta \nabla p(\xk, \yk) \in \D_{h_{\beta}}^{\hat{\delta}_k}(\wk, \zk).
    \end{equation*}
    In addition, from the uniform boundedness of $\{(\xk, \yk)\}$ in Assumption \ref{Assumption_alg_smooth}(1), the Lipschitz continuity of $\nabla \A$, and \eqref{Eq_Prop_local_convergence_Dh_esti_0}, we can conclude that $\lim_{k\to +\infty} \hat{\delta}_k = 0$. This completes the proof. 
\end{proof}

\begin{theo}
    \label{Theo_local_convergence_NS_joint_nondeg}
    Suppose Assumption \ref{Assumption_joint_nondegeneracy}, Assumption \ref{Assumption_f}, and Assumption \ref{Assumption_alg_smooth} hold. For any sequence $\{(\xk, \yk)\}$ generated by \eqref{Eq_Subroutine_manifold} with $(x_0, y_0) \in \Omegajoint$, and any sequences of stepsizes $\{\eta_k\}$ and $\{\theta_k\}$ satisfying \eqref{Eq_Cond_Stepsizes}, we have that any cluster point of $\{(\xk, \yk)\}$ is a first-order stationary point of \eqref{Prob_BLO}.
\end{theo}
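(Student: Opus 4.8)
The plan is to analyze the auxiliary sequence $\{(\wk, \zk)\} := \{\A(\xk, \yk)\}$ instead of $\{(\xk, \yk)\}$ directly, and to show that its continuous-time interpolation is a perturbed solution of the subdifferential flow $(\dot{w}, \dot{z}) \in -\D_{h_{\beta}}(w, z)$ of the exact penalty function $h_{\beta}$. First I would fix the penalty parameter $\beta > \frac{32 M_g M_{g,3} L_f}{\sigma^4}$ so that the exactness result of Proposition \ref{Prop_exactness_joint_nondeg} applies. Since $(x_0, y_0) \in \Omegajoint$ and the stepsizes satisfy \eqref{Eq_Cond_Stepsizes}, Proposition \ref{Prop_restrict_manifold_joint_nondeg_New} guarantees that the whole sequence $\{(\xk, \yk)\}$ remains in $\Omegajoint$; hence $\A$ and $\nabla \A$ are well-defined and Lipschitz there (Lemma \ref{Le_error_bound_cdf}), the auxiliary sequence $\{(\wk, \zk)\}$ is bounded, and it obeys the inexact descent recursion \eqref{Eq_update_auxiliary}, which has exactly the form \eqref{Eq_def_Iter} with approximate subgradient term $\nabla \A(\xk, \yk)\mkp + \beta \nabla p(\xk, \yk)$ and noise term $\xi_{k+1} - \beta \nabla p(\xk, \yk)$.

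Next I would verify the three hypotheses of Lemma \ref{Le_interpolated_process} for the recursion \eqref{Eq_update_auxiliary} with $\D = \D_{h_{\beta}}$. The uniform boundedness of the iterates and of the subgradient estimates (condition 1) follows from Assumption \ref{Assumption_alg_smooth}(1) together with the boundedness of $\D_f$, $\nabla \A$, and $\nabla p$ over $\Omegajoint$. The approximation condition (condition 2), namely $\nabla \A(\xk, \yk)\mkp + \beta \nabla p(\xk, \yk) \in \D_{h_{\beta}}^{\hat{\delta}_k}(\wk, \zk)$ with $\hat{\delta}_k \to 0$, is exactly Proposition \ref{Prop_local_convergence_Dh_esti}. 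The vanishing-noise condition (condition 3) is precisely the conclusion of Proposition \ref{Prop_local_convergence_noise_controll}. Consequently, the interpolated process of $\{(\wk, \zk)\}$ is a perturbed solution of the differential inclusion $(\dot{w}, \dot{z}) \in -\D_{h_{\beta}}(w, z)$.

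The core step then invokes the differential-inclusion (ODE) method of \cite{benaim2005stochastic,davis2020stochastic}. By Lemma \ref{Le_conservative_field_joint_nondeg} the function $h_{\beta}$ is path-differentiable with conservative field $\D_{h_{\beta}}$, so by Lemma \ref{Le_DI_cdf} it is a Lyapunov function for the above flow with stable set $\ca{S}_{DI} = \{(x, y): 0 \in \D_{h_{\beta}}(x, y)\}$. Since the bounded perturbed solution has an internally chain-transitive limit set, it suffices to check that the set of Lyapunov values $h_{\beta}(\ca{S}_{DI} \cap \Omegajoint)$ has empty interior, which forces every cluster point of $\{(\wk, \zk)\}$ into $\ca{S}_{DI}$. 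To establish this I would identify $\ca{S}_{DI} \cap \Omegajoint$ with the feasible stationary set of \eqref{Prob_Con}. Indeed, any $(x, y) \in \ca{S}_{DI} \cap \Omegajoint$ is a first-order stationary point of $h_{\beta}$, so Proposition \ref{Prop_exactness_joint_nondeg} yields $\nabla_y g(x, y) = 0$; at such a feasible point $\A(x, y) = (x, y)$, $\nabla p(x, y) = 0$, and $\nabla \A(x, y) = I - J_g J_g^{\dagger}$ is the orthogonal projection onto $\Nspace(J_g\tp)$, whence $0 \in \D_{h_{\beta}}(x, y)$ is equivalent to the existence of $\lambda$ with $0 \in \D_f(x, y) + J_g(x, y)\lambda$, i.e. first-order stationarity of \eqref{Prob_BLO} in the sense of Definition \ref{Defin_stationary_point}. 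Moreover $h_{\beta}(x, y) = f(x, y)$ there, so $h_{\beta}(\ca{S}_{DI} \cap \Omegajoint)$ is contained in the critical-value set of Assumption \ref{Assumption_f}(2), which has empty interior.

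Finally I would transfer the conclusion back to the original sequence. By Lemma \ref{Le_esti_gy_joint_nondeg}, $\nabla_y g(\xk, \yk) \to 0$, so $\norm{(\xk, \yk) - (\wk, \zk)} \to 0$ and the two sequences share the same cluster points; combined with the previous paragraph, every cluster point of $\{(\xk, \yk)\}$ lies in $\ca{S}_{DI} \cap \Omegajoint$ and is therefore a first-order stationary point of \eqref{Prob_BLO}. I expect the main obstacle to be the rigorous application of the differential-inclusion convergence theory on the restricted domain $\Omegajoint$ — in particular, justifying that $h_{\beta}$, which is defined only on $\Omegajoint$, serves as a genuine Lyapunov function for the limit dynamics of the (forward-invariant) iterates, and carefully reducing Assumption \ref{Assumption_f}(2) to the empty-interior requirement on $h_{\beta}(\ca{S}_{DI} \cap \Omegajoint)$ needed to push the internally chain-transitive limit set into the stable set.
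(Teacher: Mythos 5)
Your proposal is correct and follows essentially the same route as the paper's own proof: passing to the auxiliary sequence $(\wk,\zk)=\A(\xk,\yk)$, verifying the three hypotheses of Lemma \ref{Le_interpolated_process} via Propositions \ref{Prop_local_convergence_Dh_esti} and \ref{Prop_local_convergence_noise_controll}, invoking the Lyapunov/chain-transitivity machinery of \cite{benaim2005stochastic} with $h_\beta$ as Lyapunov function, using Proposition \ref{Prop_exactness_joint_nondeg} to identify the stable set with the feasible stationary set, and transferring back via $\nabla_y g(\xk,\yk)\to 0$. Your explicit observation that $P_{\A}=I-J_gJ_g^{\dagger}$ is the orthogonal projector onto $\Nspace(J_g\tp)$, so that $0\in P_{\A}\D_f$ is equivalent to $0\in\D_f+J_g\lambda$ for some $\lambda$, is a detail the paper leaves implicit but is exactly the right justification.
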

\begin{proof}
    From Proposition \ref{Prop_local_convergence_Dh_esti}, there exists a diminishing sequence $\{\hat{\delta}_k\}$ such that the auxiliary sequence $\{(\wk, \zk)\}$ can be characterized by the following update scheme,  
    \begin{equation}
        \label{Eq_Theo_local_convergence_NS_joint_nondeg_0}
        (\wkp, \zkp) \in (\wk, \zk) - \eta_k \D_{h_{\beta}}^{\hat{\delta}_k}(\wk, \zk) -\eta_k \left(\xi_{k+1} -  \beta \nabla p(\xk, \yk)\right),
    \end{equation}
    which corresponds to the differential inclusion 
    \begin{equation}
        \label{Eq_Theo_local_convergence_NS_joint_nondeg_DI}
        \left( \frac{\mathrm{d}w}{\mathrm{d}t},  \frac{\mathrm{d}z}{\mathrm{d}t} \right) \in - \D_{h_{\beta}}(w, z). 
    \end{equation}

    From Proposition \ref{Prop_restrict_manifold_joint_nondeg_New} and Lemma \ref{Le_esti_gy_joint_nondeg}, the sequence $\{(\xk, \yk)\}$ is restricted to within $\Omega$ and eventually converges towards $\M$. Then it holds that the auxiliary sequence $\{(\wk, \zk)\}$ also converges towards $\M$.  
    As demonstrated in Lemma \ref{Le_DI_cdf}, the differential inclusion \eqref{Eq_Theo_local_convergence_NS_joint_nondeg_DI} admits $h_{\beta}$ as its Lyapunov function with stable set $\{(w, z) \in \Omega: 0 \in \D_{h_{\beta}}(w, z)\}$.

    Next we show that the interpolated sequence of $\{(\wk, \zk)\}$ is a perturbed solution of the differential inclusion \eqref{Eq_Theo_local_convergence_NS_joint_nondeg_DI} through Lemma \ref{Le_interpolated_process}. The uniform boundedness of $\{(\xk, \yk)\}$ and the local boundedness of $\A$ guarantees the uniform boundedness of $\{(\wk, \zk)\}$, which verifies the first condition in Lemma \ref{Le_interpolated_process}. Moreover, as $\lim_{k\to +\infty} \hat{\delta}_k = 0$, we can conclude the validity of the second condition of Lemma \ref{Le_interpolated_process}  based on \eqref{Eq_Theo_local_convergence_NS_joint_nondeg_0}. Additionally, Proposition \ref{Prop_local_convergence_noise_controll} illustrates that the sequence of error terms $\{\left(\xi_{k+1} -  \beta \nabla p(\xk, \yk)\right)\}$ in \eqref{Eq_Theo_local_convergence_NS_joint_nondeg_0} can be controlled, hence verifying the validity of the third condition of Lemma \ref{Le_interpolated_process}. As a result, we can conclude that the interpolated sequence $\{(\wk, \zk)\}$ is a perturbed solution of the differential inclusion \eqref{Eq_Theo_local_convergence_NS_joint_nondeg_DI}.

    Moreover, it is worth noting that $\A(w,z)=(w,z)$ whenever $\nabla_y g(w,z)=0$. Then from Proposition \ref{Prop_exactness_joint_nondeg} and the expression of $\D_{h_{\beta}}$, for any \revise{$\beta > \frac{32 L_f M_g M_{g,3}}{\sigma^4}$}, it  holds that 
    \begin{small}
        \begin{equation*}
        \{h_{\beta}(w, z): (w, z) \in \Omega, 0 \in \D_{h_{\beta}}(w, z)\} = \{f(w, z): 0 \in P_{\A}(w, z) \D_f(w, z), \nabla_{y} g(w, z) = 0\}.
    \end{equation*}
    \end{small}
    Therefore, the critical values of $h_{\beta}$ have an empty interior in $\bb{R}$ by Assumption \ref{Assumption_f}. Together with the path-differentiability of $h_{\beta}$ in Lemma \ref{Le_conservative_field_joint_nondeg}, and \cite[Proposition 3.27]{benaim2005stochastic}, we can conclude that any cluster point of $\{(\wk, \zk)\}$ lies in $\{(w, z) \in \Rn \times \Rp: 0 \in \D_{h_{\beta}}(w,z), \nabla_{y} g(w, z) = 0\}$.

    Furthermore, with Proposition \ref{Prop_exactness_joint_nondeg}, it holds that 
    \begin{equation*}
        \{(w, z) \in \Omega:  0 \in \D_{h_{\beta}}(w, z)\} = \{(w, z) \in \Rn \times \Rp: 0 \in P_{\A}(w, z) \D_f(w, z), \nabla_{y} g(w, z) = 0\}.
    \end{equation*}
    Therefore, any cluster point of $\{(\xk, \yk)\}$ 
    is a first-order stationary point of \eqref{Prob_Con}, \revise{and hence is a first-order stationary point of  \eqref{Prob_BLO}.} This completes the proof. 
\end{proof}

\begin{rmk}

    It is worth mentioning that, based on recent techniques on establishing the global stability of stochastic subgradient methods \cite{josz2023global,xiao2024developing}, we can also prove that the sequence of iterates $\{(x_k, y_k)\}$ generated by \eqref{Eq_Subroutine_manifold} is uniformly bounded under the coercivity of the objective function $f(x, y)$. More precisely, if the upper-level objective function $f$ is coercive, then for any $M>0$ there exists $\alpha_{\mathrm{step}}>0$ such that, whenever $\norm{x_0}+\norm{y_0}\leq M$, $\mathrm{dist}((x_0,y_0),\mathcal{M})\leq \alpha_{\mathrm{step}}$, and $\sup_{k\geq 0}\max\left\{\eta_k,\theta_k,\frac{\eta_k}{\theta_k},\frac{\theta_k^2}{\eta_k}\right\}\leq \alpha_{\mathrm{step}}$, 
    the sequence $\{(x_k,y_k)\}$ generated by \eqref{Eq_Subroutine_manifold} remains uniformly bounded. That is, there exists $C_M>0$ such that $\norm{x_k}+\norm{y_k}\leq C_M$ for all $k\geq 0$. To clearly present the main results of this paper, we provide detailed proofs of the global stability analysis in Appendix \ref{Appendix_Global_Stability}.
\end{rmk}

\subsubsection{Globalization by a two-phase hybrid approach}
\label{Subsection_globalization}

In this part, we establish the global convergence of Algorithm \ref{Alg_TPHSD}. We begin our theoretical analysis with the following auxiliary lemma.

\begin{lem}
    \label{Le_diminish_ukek_joint_nondeg}
    Suppose Assumption \ref{Assumption_joint_nondegeneracy}, Assumption \ref{Assumption_f}, and Assumption \ref{Assumption_alg_smooth} hold. Then for any sequence of iterates $\{(\xk, \yk)\}$ generated by \eqref{Eq_Subroutine_gy}, it holds that $\liminf\limits_{k\to +\infty} \norm{\nabla_y g(\xk, \yk)} = 0$. 
\end{lem}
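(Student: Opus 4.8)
The plan is to use the lower-level objective $g$ itself as a merit (Lyapunov) function, because the dominant $y$-update in \eqref{Eq_Subroutine_gy}, namely $-\theta_k\nabla_y g(\xk,\yk)$, is a gradient-descent step on $g$ with respect to the $y$-variable. This is the natural counterpart of the role that $p(x,y)=\frac{1}{2}\norm{\nabla_y g(x,y)}^2$ plays in the analysis of \eqref{Eq_Subroutine_manifold}; note that $p$ is \emph{not} the right choice here, since \eqref{Eq_Subroutine_gy} moves $y$ along $-\nabla_y g$ rather than along $-\nabla p$, so there is no mechanism forcing $p$ to decrease. I would argue by contradiction, assuming $\liminf_{k\to+\infty}\norm{\nabla_y g(\xk,\yk)}>0$.

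First I would collect the ingredients. Under Assumption \ref{Assumption_alg_smooth}(1) the iterates remain in the compact set $\{(x,y):\norm{x}+\norm{y}\leq M_{iter}\}$, on which $g$ is bounded both above and below and, being $C^3$, has a Lipschitz-continuous gradient with some constant $L_{\nabla g}>0$; moreover the momentum terms satisfy $\norm{m_{k+1}}\leq L_f$ exactly as in \eqref{eq-m}. Writing $\Delta_k := (\xkp,\ykp)-(\xk,\yk) = -\eta_k m_{k+1}-\theta_k(0,\nabla_y g(\xk,\yk))$, the descent lemma yields
\[
g(\xkp,\ykp)-g(\xk,\yk) \leq \inner{\nabla g(\xk,\yk),\, \Delta_k} + \frac{L_{\nabla g}}{2}\norm{\Delta_k}^2 .
\]
Expanding the inner product produces the key negative term $-\theta_k\norm{\nabla_y g(\xk,\yk)}^2$ together with the perturbation $-\eta_k\inner{\nabla g(\xk,\yk), m_{k+1}}$, and bounding $\norm{\Delta_k}^2\leq 2\eta_k^2 L_f^2 + 2\theta_k^2\norm{\nabla_y g(\xk,\yk)}^2$ gives
\[
g(\xkp,\ykp)-g(\xk,\yk) \leq -\theta_k\norm{\nabla_y g(\xk,\yk)}^2 + \eta_k L_g L_f + L_{\nabla g}\eta_k^2 L_f^2 + L_{\nabla g}\theta_k^2\norm{\nabla_y g(\xk,\yk)}^2 .
\]

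The crux, and the main obstacle, is that the perturbation $\eta_k L_g L_f$ is of order $\eta_k$ with $\sum_k\eta_k=+\infty$, so summing the perturbations over all $k$ is hopeless; this is precisely where the two-timescale separation is indispensable. Instead I would argue pointwise: under the contradiction hypothesis there exist $c>0$ and $K_0$ with $\norm{\nabla_y g(\xk,\yk)}\geq c$ for all $k\geq K_0$, and since $\theta_k\to 0$, $\eta_k/\theta_k\to 0$ (hence $\eta_k=o(\theta_k)$), and $\eta_k^2/\theta_k=(\eta_k/\theta_k)\eta_k\to 0$, each of the three perturbation terms is $o(\theta_k)$ and is thus dominated by a fixed fraction of $\theta_k c^2\leq \theta_k\norm{\nabla_y g(\xk,\yk)}^2$ for all large $k$. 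This leaves a clean per-step decrease $g(\xkp,\ykp)-g(\xk,\yk)\leq -\frac{c^2}{4}\theta_k$ for $k\geq K_1$. Finally, because $\eta_k/\theta_k\to 0$ forces $\theta_k\geq \eta_k$ eventually while $\sum_k\eta_k=+\infty$, we have $\sum_k\theta_k=+\infty$; summing the per-step decrease then drives $g(\xk,\yk)\to -\infty$, contradicting the boundedness of $g$ on the compact region containing the iterates. Hence $\liminf_{k\to+\infty}\norm{\nabla_y g(\xk,\yk)}=0$.
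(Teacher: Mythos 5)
Your proposal is correct and follows essentially the same route as the paper: the paper's proof also uses $g$ itself as the merit function, applies the descent inequality to the combined update, argues by contradiction that $\norm{\nabla_y g(\xk,\yk)}\geq \varepsilon/2$ eventually, absorbs the $O(\eta_k)$ and $O(\theta_k^2)$ perturbations into a fraction of $-\theta_k\norm{\nabla_y g(\xk,\yk)}^2$ using the two-timescale conditions, and concludes $g\to-\infty$ from $\sum_k\theta_k=+\infty$. The only differences are cosmetic (you bound $\norm{\Delta_k}^2$ jointly rather than splitting the $x$- and $y$-blocks, and you add the helpful side remark on why $p$ would not work here).
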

\begin{proof}
    From the update scheme of the subroutine \eqref{Eq_Subroutine_gy} and the differentiability of $g$, we have
    \begin{equation}
        \label{Eq_Le_diminish_ukek_joint_nondeg_0}
        \begin{aligned}
            &g(\xkp, \ykp) - g(\xk, \yk) \\
            \leq{}& -\inner{\nabla_x g(\xk, \yk), \eta_k m_{x, k+1}} - \inner{\nabla_y g(\xk, \yk), \eta_k m_{y, k+1} + \theta_k \nabla_y g(\xk, \yk)}  \\
            & + M_g \norm{\eta_k m_{x, k+1}}^2 + M_g \norm{\eta_k m_{y, k+1} + \theta_k \nabla_y g(\xk, \yk)}^2\\
            \leq{}& - \theta_k \norm{\nabla_y g(\xk, \yk)}^2  + \eta_k L_f \norm{\nabla_y g(\xk, \yk)} + \eta_k L_f L_g \\
            &+ 3\eta_k^2 M_g L_f^2 + 2\theta_k^2M_g \norm{\nabla_y g(\xk, \yk)}^2\\
            \leq{}& - \left( \theta_k - 2\theta_k^2 M_g \right) \norm{\nabla_y g(\xk, \yk)}^2+ \left(2\eta_k L_g + 3\eta_k^2 M_gL_f \right) L_f. 
        \end{aligned}
    \end{equation}
    Now we prove this lemma by contradiction, that is,  we assume that there exists $\varepsilon > 0$ such that  $\liminf_{k\to +\infty} \norm{\nabla_y g(\xk, \yk)} =\varepsilon$. Thus there exists $K_1 > 0$ such that $\inf_{k\geq K_1} \norm{\nabla_y g(\xk, \yk)} \geq \frac{\varepsilon}{2}$, $\sup_{k\geq K_1} \eta_k \leq \frac{1}{3M_g}$, $\sup_{k\geq K_1} \theta_k \leq \frac{1}{4M_g}$, and $\sup_{k\geq K_1} \frac{\eta_k}{\theta_k} \leq \frac{\varepsilon^2}{16(2L_g + L_f)L_f}$.
    Then from \eqref{Eq_Le_diminish_ukek_joint_nondeg_0}, it holds for any $k\geq K_1$ that 
    \begin{equation*}
        \begin{aligned}
            &g(\xkp, \ykp) - g(\xk, \yk) \\
            \leq{}& - \left(\theta_k - 2\theta_k^2 M_g \right) \norm{\nabla_y g(\xk, \yk)}^2+ \left(2\eta_k L_g + 3\eta_k^2 M_gL_f \right) L_f\\
            \leq{}& - \frac{\theta_k}{2}\norm{\nabla_y g(\xk, \yk)}^2 + \eta_k(2L_g + L_f)L_f.
        \end{aligned}
    \end{equation*}
    As a result, for any $k > K_1$, it holds that 
    \begin{equation*}
        \begin{aligned}
            &g(x_{k+1}, y_{k+1}) - g(x_{K_1}, y_{K_1}) \\
            \leq{}& \sum_{i = K_1}^k  - \frac{\theta_i}{2}\norm{\nabla_y g(x_i, y_i)}^2 + \eta_i(2L_g + L_f)L_f \leq  \sum_{i = K_1}^k  - \frac{\varepsilon^2}{16}\theta_i.
        \end{aligned}
    \end{equation*}
    This leads to the following inequality 
    \begin{equation*}
        \liminf_{k \to +\infty} g(x_k, y_k) \leq \liminf_{k\to +\infty }g(x_{K_1}, y_{K_1}) + \sum_{i = K_1}^k - \frac{\varepsilon^2}{16}\theta_i = -\infty,
    \end{equation*}
    which contradicts the fact that $\{(\xk, \yk)\}$ is 
    bounded. Therefore, we can conclude that $\liminf_{k\to +\infty} \norm{\nabla_y g(\xk, \yk)} = 0$. This completes the proof. 
\end{proof}

\begin{lem}
        \label{Le_error_esti_ukek_joint_nondeg}
       Suppose Assumption \ref{Assumption_joint_nondegeneracy}, Assumption \ref{Assumption_f}, and Assumption \ref{Assumption_alg_smooth} hold. Then for any $(\xk, \yk) \in \Omegajoint$, it holds that $\norm{u_k + e_k} \geq \frac{\sigma}{4} \norm{\nabla_y g(\xk, \yk)}$. 
\end{lem}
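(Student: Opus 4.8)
The plan is to bound $\norm{u_k + e_k}$ from below by a reverse triangle inequality: I control the dominant term $\norm{u_k}$ through the nondegeneracy estimate already established on $\Omegajoint$, and absorb the evaluation error $\norm{e_k}$ using the fact that the residual $\nabla_y g$ is small throughout $\Omegajoint$.

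First I would recall that $u_k = \nabla p(\xk, \yk)$ by the definition of \eqref{Eq_Subroutine_manifold}, and invoke Lemma \ref{Le_welldef_penalty_joint_nondeg}, which applies because $(\xk, \yk) \in \Omegajoint$, to obtain the lower bound $\norm{u_k} = \norm{\nabla p(\xk, \yk)} \geq \frac{\sigma}{2}\norm{\nabla_y g(\xk, \yk)}$. Next, Assumption \ref{Assumption_alg_smooth}(3) controls the evaluation error by $\norm{e_k} \leq M_e \min\{1, \omega_k\}\norm{\nabla_y g(\xk, \yk)}^2 \leq M_e \norm{\nabla_y g(\xk, \yk)}^2$, where I simply drop the factor $\min\{1,\omega_k\}\leq 1$.

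The key geometric observation is that on $\Omegajoint$ the residual $\nabla_y g$ is uniformly small. Since $\nabla_y g$ vanishes on $\M$ and is $M_g$-Lipschitz, the definition of $\Omegajoint$ gives $\norm{\nabla_y g(\xk, \yk)} \leq M_g\,\mathrm{dist}((\xk, \yk), \M) \leq \rho$, exactly as derived in \eqref{Eq_Prop_restrict_manifold_joint_nondeg_3}. Combining this with the choice $\rho \leq \frac{\sigma}{64 M_e}$ built into the definition of $\rho$ yields $\norm{e_k} \leq M_e \norm{\nabla_y g(\xk, \yk)} \cdot \norm{\nabla_y g(\xk, \yk)} \leq \frac{\sigma}{64}\norm{\nabla_y g(\xk, \yk)}$. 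Feeding both estimates into the reverse triangle inequality gives
\[
\norm{u_k + e_k} \geq \norm{u_k} - \norm{e_k} \geq \Big( \frac{\sigma}{2} - \frac{\sigma}{64} \Big) \norm{\nabla_y g(\xk, \yk)} \geq \frac{\sigma}{4}\norm{\nabla_y g(\xk, \yk)},
\]
which is the claimed inequality.

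I expect no serious obstacle here; the only point requiring attention is verifying that the error-absorption margin is adequate, i.e.\ that the penalty radius bound $\rho \leq \frac{\sigma}{64 M_e}$ leaves enough slack between the coefficient $\frac{\sigma}{2}$ and the target $\frac{\sigma}{4}$. Since $\frac{\sigma}{2} - \frac{\sigma}{64} = \frac{31\sigma}{64} > \frac{\sigma}{4}$, the margin is comfortable, and the degenerate case $\nabla_y g(\xk, \yk) = 0$ makes the inequality trivially hold.
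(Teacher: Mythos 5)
Your proof is correct and follows essentially the same route as the paper's: a reverse triangle inequality combining the lower bound $\norm{u_k}\geq \frac{\sigma}{2}\norm{\nabla_y g(\xk,\yk)}$ from Lemma \ref{Le_welldef_penalty_joint_nondeg} with the error bound $\norm{e_k}\leq M_e\norm{\nabla_y g(\xk,\yk)}^2$ from Assumption \ref{Assumption_alg_smooth}(3). The only difference is that you explicitly verify the absorption step via $\norm{\nabla_y g(\xk,\yk)}\leq \rho\leq \frac{\sigma}{64 M_e}$, which the paper leaves implicit.
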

\begin{proof}
    From Assumption \ref{Assumption_alg_smooth}, it holds that 
    \begin{equation*}
        \norm{u_k + e_k} \geq \norm{u_k} - \norm{e_k} \geq \frac{\sigma}{2} \norm{\nabla_y g(\xk, \yk)} - M_e \norm{\nabla_y g(\xk, \yk)}^2 \geq \frac{\sigma}{4} \norm{\nabla_y g(\xk, \yk)}.
    \end{equation*}
    This completes the proof. 
\end{proof}

\begin{lem}
    \label{Le_Subroutine_manifold_nonincrease}
    Suppose Assumption \ref{Assumption_joint_nondegeneracy}, Assumption \ref{Assumption_f}, and Assumption \ref{Assumption_alg_smooth} hold. Then with any $\tilde{\varepsilon}> 0$, for any sequence $\{(\xk, \yk)\}$ generated by \eqref{Eq_Subroutine_manifold} such that 
    \begin{equation*}
        \begin{aligned}
            &\inf_{k\geq 0}\norm{u_k + e_k} \geq \tilde{\varepsilon} \norm{\nabla_y g(\xk, \yk)}, \quad \sup_{k\geq 0} \eta_k \leq \frac{\rho}{8M_g L_f}, \\
            &\sup_{k\geq 0}\theta_k \leq \min\left\{\frac{1}{M_p}, \frac{\rho}{8 M_g^2 L_g + 8M_e M_g L_g^2} \right\},\quad \sup_{k\geq 0}\frac{\eta_k}{\theta_k} \leq \frac{\tilde{\varepsilon}^2 \rho}{16L_f},
        \end{aligned}
    \end{equation*}
    it holds that 
    $\liminf_{k\to +\infty} \mathrm{dist}\left((\xk, \yk) , \Omegajoint \right) = 0. 
    $
\end{lem}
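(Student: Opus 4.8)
The plan is to track the penalty value $p(\xk, \yk) = \frac12\norm{\nabla_y g(\xk, \yk)}^2$ along the \eqref{Eq_Subroutine_manifold} iterates and show it must be driven down whenever $\norm{\nabla_y g}$ stays bounded away from zero; this will force $\liminf_{k\to+\infty}\norm{\nabla_y g(\xk,\yk)} = 0$, from which the distance conclusion follows by a compactness argument. The essential difficulty, in contrast to Proposition \ref{Prop_restrict_manifold_joint_nondeg_New} and Lemma \ref{Le_esti_gy_joint_nondeg}, is that the iterates need not lie in $\Omegajoint$, so the non-degeneracy bound $\norm{\nabla p}\geq\frac{\sigma}{2}\norm{\nabla_y g}$ from Lemma \ref{Le_welldef_penalty_joint_nondeg} is unavailable. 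The hypothesis $\norm{u_k + e_k}\geq\tilde{\varepsilon}\norm{\nabla_y g(\xk,\yk)}$ is precisely what will replace it.

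First I would derive a descent inequality for $p$ valid on the bounded, convex set $\{\norm{x}+\norm{y}\leq M_{iter}\}$, where $p$ is $M_p$-smooth (so $\sup_k\theta_k\leq\frac{1}{M_p}$ is used). Using the update $(\xkp,\ykp)-(\xk,\yk) = -\eta_k m_{k+1} - \theta_k(u_k + e_k)$ and $u_k = \nabla p(\xk,\yk)$, I would expand the inner product via $\inner{u_k,\, u_k+e_k} = \norm{u_k+e_k}^2 - \inner{e_k,\, u_k+e_k}$. The leading term $-\theta_k\norm{u_k+e_k}^2$ is bounded above by $-\tilde{\varepsilon}^2\theta_k\norm{\nabla_y g}^2$ directly from the hypothesis, which supplies the missing descent. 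The momentum contribution $\inner{\nabla p,\, -\eta_k m_{k+1}}$ is handled by Young's inequality with $\norm{m_{k+1}}\leq L_f$ (see \eqref{eq-m}) and $\norm{\nabla p}\leq M_g\norm{\nabla_y g}$, yielding an $O(\eta_k^2/\theta_k)$ error; the remainder $\frac{M_p}{2}\norm{\eta_k m_{k+1}+\theta_k(u_k+e_k)}^2$ and the cross term $\theta_k\inner{e_k,\, u_k+e_k}$ are absorbed using the $\theta_k$-bounds and, crucially, Assumption \ref{Assumption_alg_smooth}(3): since $\norm{e_k}\leq M_e\omega_k\norm{\nabla_y g}^2$ with $\omega_k\to 0$ and $\norm{\nabla_y g}\leq L_g$, the $e_k$-induced error is negligible relative to $\tilde{\varepsilon}^2\theta_k\norm{\nabla_y g}^2$ for large $k$. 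The outcome is an inequality of the form
\begin{equation*}
    p(\xkp,\ykp) - p(\xk,\yk) \leq -c\,\theta_k\norm{\nabla_y g(\xk,\yk)}^2 + C\,\frac{\eta_k^2}{\theta_k},
\end{equation*}
holding for all sufficiently large $k$, with $c$ a positive multiple of $\tilde{\varepsilon}^2$.

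Next I would argue $\liminf_{k\to+\infty}\norm{\nabla_y g(\xk,\yk)} = 0$ by contradiction. If the liminf equals some $\varepsilon>0$, then $\norm{\nabla_y g(\xk,\yk)}^2\geq\varepsilon^2/4$ for all large $k$. Since $\frac{\eta_k^2}{\theta_k} = \theta_k\big(\frac{\eta_k}{\theta_k}\big)^2 = o(\theta_k)$ by Assumption \ref{Assumption_alg_smooth}(2), the error term is eventually dominated by half the descent term, so $p(\xkp,\ykp)-p(\xk,\yk)\leq-\frac{c\varepsilon^2}{8}\theta_k$ for large $k$. Noting that $\sum_k\theta_k = +\infty$ (because $\sum_k\eta_k=+\infty$ together with $\eta_k/\theta_k\to 0$ forces $\theta_k\geq\eta_k$ eventually), telescoping drives $p(\xk,\yk)\to-\infty$, contradicting $p\geq 0$.

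Finally, from $\liminf_k\norm{\nabla_y g(\xk,\yk)}=0$ I would extract a subsequence along which $\norm{\nabla_y g}\to 0$; by the uniform boundedness in Assumption \ref{Assumption_alg_smooth}(1) it has a convergent sub-subsequence with limit $(x^\star,y^\star)$ satisfying $\nabla_y g(x^\star,y^\star)=0$ and $\norm{x^\star}+\norm{y^\star}\leq M_{iter}$, hence $(x^\star,y^\star)\in\M\subseteq\Omegajoint$. Therefore $\mathrm{dist}((\xk,\yk),\Omegajoint)\to 0$ along this subsequence, giving $\liminf_{k\to+\infty}\mathrm{dist}((\xk,\yk),\Omegajoint)=0$. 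The main obstacle is the descent step: obtaining a genuine decrease of $p$ outside $\Omegajoint$, where the usual non-degeneracy estimate fails. The ratio hypothesis $\norm{u_k+e_k}\geq\tilde{\varepsilon}\norm{\nabla_y g}$ furnishes the required leading term, while the diminishing factor $\omega_k$ in the error bound is exactly what prevents the perturbation $e_k$ — controlled only at order $\norm{\nabla_y g}^2$ — from overwhelming it when $\norm{\nabla_y g}$ is not small.
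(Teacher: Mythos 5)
Your proposal is correct and follows essentially the same route as the paper's own proof: a one-step descent inequality for $p$ in which the ratio hypothesis $\norm{u_k+e_k}\geq\tilde{\varepsilon}\norm{\nabla_y g(\xk,\yk)}$ substitutes for the non-degeneracy bound of Lemma \ref{Le_welldef_penalty_joint_nondeg} (the paper first extracts $\norm{u_k}\geq\frac{\tilde{\varepsilon}}{2}\norm{\nabla_y g}$ for large $k$ and works with $\norm{\nabla p}^2$, while you keep $\norm{u_k+e_k}^2$ directly, a cosmetic difference), followed by the same contradiction via telescoping with $\sum_k\theta_k=+\infty$ and the same compactness step to pass from $\liminf\norm{\nabla_y g(\xk,\yk)}=0$ to $\liminf\mathrm{dist}((\xk,\yk),\Omegajoint)=0$.
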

\begin{proof}
    Let $d_k = (d_{x, k}, d_{y, k})$, $\mk = (m_{x, k}, m_{y, k})$, $u_k = (u_{x, k}, u_{y, k})$ and $e_k = (e_{x, k}, e_{y, k})$. From Assumption \ref{Assumption_alg_smooth}(3), there exists $K > 0$ such that for any $k\geq K$, $\norm{u_k} \geq \frac{\tilde{\varepsilon}}{2} \norm{\nabla_y g(\xk, \yk)}$. 
    Then from the update scheme of \eqref{Eq_Subroutine_manifold} and Proposition \ref{Prop_restrict_manifold_joint_nondeg_New}, it holds that 
    \begin{equation}
        \label{Eq_Le_Subroutine_manifold_nonincrease_0}
        \begin{aligned}
            &p(\xkp, \ykp) - p(\xk, \yk) \\
            \leq{}& \inner{\nabla p(\xk, \yk), -\eta_k \mkp - \theta_k  (u_k + e_k)} + \frac{M_p}{2} \norm{\eta_k \mkp + \theta_k  (u_k + e_k)}^2\\
            \leq{}& \inner{\nabla p(\xk, \yk), -\eta_k \mkp - \theta_k  (u_k + e_k)} \\
            &+ M_p \eta_k^2 \norm{\mkp}^2 + 2M_p \theta_k^2 \norm{u_k}^2 + 2M_p \theta_k^2 \norm{e_k}^2\\
            \leq{}& \inner{\nabla p(\xk, \yk),  - \theta_k  u_k} + \left(M_p \eta_k^2 + \frac{8\eta_k^2}{\theta_k}\right) \norm{\mkp}^2 \\
            &+ \left(2M_p \theta_k^2 +8 \theta_k\right)\norm{e_k}^2 + \frac{\theta_k}{4} \norm{\nabla p(\xk, \yk)}^2 \\
            \leq{}&- \frac{3\theta_k}{4} \norm{\nabla p(\xk, \yk)}^2 + 16\theta_k \omega_k^2 M_e^2 \norm{\nabla_y g(\xk, \yk)}^4  + \frac{16\eta_k^2}{ \theta_k}  L_f^2\\
            \leq{}& - \frac{3\theta_k \tilde{\varepsilon}^2}{16} \norm{\nabla_y g(\xk, \yk)}^2 + 16\theta_k \omega_k^2 M_e^2 \norm{\nabla_y g(\xk, \yk)}^4 +  \frac{16\eta_k^2}{ \theta_k}  L_f^2. 
        \end{aligned}
    \end{equation}

    Now we prove this lemma by contradiction. That is, we assume that there exists $\kappa > 0$ such that $\liminf_{k\to +\infty} \norm{\nabla_{y} g(\xk, \yk)} = \kappa$. Then from Assumption \ref{Assumption_alg_smooth}, there exists $K > 0$ such that $\sup_{k\geq K} \omega_k \leq \frac{\tilde{\varepsilon}}{32M_e L_g}$, $\sup_{k\geq K} \frac{\eta_k}{\theta_k} \leq \frac{\tilde{\varepsilon} \kappa}{64 L_f}$ and $\inf_{k\geq K} \norm{\nabla_{y} g(\xk, \yk)} \geq \frac{\kappa}{2}$.
    Then, for any $k\geq K$, it holds that $p(\xkp, \ykp) - p(\xk, \yk)\leq - \frac{\theta_k \tilde{\varepsilon}^2 \kappa^2}{32}$. 
    Therefore, we can conclude that 
    \begin{equation*}
        \liminf_{k \to +\infty} p(\xk, \yk) \leq \liminf_{k \to +\infty} p(x_K, y_K) -\frac{\tilde{\varepsilon}^2 \kappa^2}{32} \sum_{i = K}^k \theta_i = -\infty,
    \end{equation*}
    which contradicts the fact that $p(x, y) \geq 0$ holds for any $(x, y) \in \Rn \times \Rp$. As a result, we can conclude that $\liminf_{k\to +\infty} \norm{\nabla_{y} g(\xk, \yk)} = 0$, and thus the uniform boundedness of $\{(\xk, \yk)\}$ guarantees that $\liminf_{k\to +\infty} \mathrm{dist}\left((\xk, \yk) , \Omegajoint \right) = 0$. This completes the proof. 
\end{proof}

Now we define $\tau: \bb{N}_+ \to \bb{N}_+$ such that $\tau(i)$ equals the $i$-th smallest element in $\ca{S}_{\mathrm{idx}}$ in Algorithm \ref{Alg_TPHSD}. Moreover, we denote $\tau_{o}(i) := \tau(2i+1)$ and $\tau_{e}(i) := \tau(2i)$ for all $i \geq 0$. Then for any $i \geq 0$, the $i$-th transition from \eqref{Eq_Subroutine_gy} to \eqref{Eq_Subroutine_manifold} occurs at $\tau_e(i)$ iteration of  Algorithm \ref{Alg_TPHSD}. similarly, for any $i \geq 0$, the $i$-th transition from \eqref{Eq_Subroutine_manifold} to \eqref{Eq_Subroutine_gy} occurs at $\tau_{o}(i)$-th iteration. It is worth mentioning that when $\tau(2i+2) = +\infty$ for a certain $i \geq  0$, then the update scheme of $\{(\xk, \yk)\}$ follows \eqref{Eq_Subroutine_gy} for any $k > \tau(2i+1)$.
\begin{prop}
    \label{Prop_finite_terminiate_gy_joint_nondeg}
    Suppose Assumption \ref{Assumption_f}, Assumption \ref{Assumption_joint_nondegeneracy}, and Assumption \ref{Assumption_alg_smooth} hold. Then for any $i \in \bb{N}_+$, it holds that $\tau(2i + 2) - \tau(2i+1) < +\infty$. 
\end{prop}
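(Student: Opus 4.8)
The plan is to argue by contradiction and reduce everything to Lemma~\ref{Le_diminish_ukek_joint_nondeg}. Fix $i \in \bb{N}_+$ and suppose, to the contrary, that $\tau(2i+2) = +\infty$, so that $\tau(2i+2) - \tau(2i+1) = +\infty$. Write $s := \tau(2i+1)$ for the index at which the $i$-th transition from \eqref{Eq_Subroutine_manifold} to \eqref{Eq_Subroutine_gy} occurs. By the construction of $\ca{S}_{\mathrm{idx}}$ in Algorithm~\ref{Alg_TPHSD}, the next element of $\ca{S}_{\mathrm{idx}}$ after $s$ is $\tau(2i+2) = +\infty$, which means no finite index in $(s, +\infty)$ is ever added to $\ca{S}_{\mathrm{idx}}$. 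Since an index is appended to $\ca{S}_{\mathrm{idx}}$ precisely at each switch of the flag, this forces $\text{flag}_k = 2$ for every $k \geq s$; that is, the tail $\{(\xk, \yk)\}_{k \geq s}$ is generated entirely by the feasibility restoration scheme \eqref{Eq_Subroutine_gy}.

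Next I would track the tolerance parameter along this FRG phase. At the transition index $s$ we have $\text{flag}_{s-1} = 1$, and the else-branch sets $\varepsilon_{s+1} = \varepsilon_s/2$. For every $k > s$ the algorithm then executes the branch with $\text{flag}_{k-1} = 2$ and $\norm{\nabla_y g(\xk, \yk)} > \varepsilon_k$, which leaves the tolerance unchanged, $\varepsilon_{k+1} = \varepsilon_k$. Hence $\varepsilon_k$ is frozen at the constant value $\bar{\varepsilon} := \varepsilon_s / 2$ for all $k \geq s+1$, and $\bar{\varepsilon} > 0$ because the update rule only halves or preserves $\varepsilon_k$ starting from $\varepsilon_0 > 0$, so $\varepsilon_k > 0$ for all $k$ by induction. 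Moreover, since $\tau(2i+2) = +\infty$, the exit test $\norm{\nabla_y g(\xk, \yk)} \leq \varepsilon_k$ never triggers for $k > s$, so $\norm{\nabla_y g(\xk, \yk)} > \bar{\varepsilon}$ holds for all $k \geq s+1$, whence $\liminf_{k\to +\infty} \norm{\nabla_y g(\xk, \yk)} \geq \bar{\varepsilon} > 0$.

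Finally I would invoke Lemma~\ref{Le_diminish_ukek_joint_nondeg}. The conclusion of that lemma depends only on the tail of the sequence, and here the tail $\{(\xk, \yk)\}_{k \geq s}$ follows the update scheme \eqref{Eq_Subroutine_gy} and, by Assumption~\ref{Assumption_alg_smooth}(1), stays uniformly bounded (the momentum terms also remain bounded by $L_f$ via the convex-combination estimate \eqref{eq-m}). Lemma~\ref{Le_diminish_ukek_joint_nondeg} therefore yields $\liminf_{k\to +\infty} \norm{\nabla_y g(\xk, \yk)} = 0$, which directly contradicts the strict lower bound $\liminf_{k\to +\infty} \norm{\nabla_y g(\xk, \yk)} \geq \bar{\varepsilon} > 0$ obtained above. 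Consequently the assumption $\tau(2i+2) = +\infty$ is untenable, and $\tau(2i+2) - \tau(2i+1) < +\infty$ follows.

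The analytic substance is entirely encapsulated in Lemma~\ref{Le_diminish_ukek_joint_nondeg}; the only real work in this proposition is the discrete bookkeeping of the switching rule. The step I expect to require the most care is verifying that remaining in \eqref{Eq_Subroutine_gy} \emph{forever} is exactly equivalent to $\tau(2i+2) = +\infty$, and that the tolerance is genuinely frozen at a strictly positive constant $\bar{\varepsilon}$ throughout the phase, so that the perpetual failure of the exit test $\norm{\nabla_y g(\xk, \yk)} \leq \varepsilon_k$ produces a \emph{uniform} positive lower bound that clashes with the liminf conclusion of the lemma.
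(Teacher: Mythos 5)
Your proof is correct and follows essentially the same route as the paper's: assume the FRG phase never terminates, observe that the tolerance is then frozen at a positive constant so the exit test's perpetual failure gives a uniform positive lower bound on $\norm{\nabla_y g(\xk,\yk)}$, and contradict the $\liminf$ conclusion of Lemma~\ref{Le_diminish_ukek_joint_nondeg}. The paper's version is terser (it leaves the bookkeeping about the frozen tolerance implicit), but the argument is the same.
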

\begin{proof}
    We prove this proposition by contradiction, in the sense that we assume that $\tau(2i + 2) - \tau(2i+1) = +\infty$  for a certain $i \geq 0$. As a result, Lemma \ref{Le_diminish_ukek_joint_nondeg} illustrates that there exists $k > \tau(2i+1)$ such that $\norm{\nabla_y g(\xk, \yk)} \leq \varepsilon_k$. This implies that $\tau(2i+2) \leq k$, which leads to a contradiction to our assumption. This completes the proof. 
\end{proof}

\begin{prop}
    \label{Prop_stable_joint_nondeg}
    Suppose Assumption \ref{Assumption_joint_nondegeneracy}, Assumption \ref{Assumption_f}, and Assumption \ref{Assumption_alg_smooth} hold. Then for the sequence $\{ \varepsilon_k\}$ in Algorithm \ref{Alg_TPHSD}, it holds that $\lim_{k \to +\infty} \varepsilon_k > 0$.  
\end{prop}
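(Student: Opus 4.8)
The plan is to argue by contradiction, exploiting the fact that the tolerance $\varepsilon_k$ is monotone. Inspecting Algorithm \ref{Alg_TPHSD}, one sees that $\varepsilon_{k+1}=\varepsilon_k$ on every step \emph{except} a transition from \eqref{Eq_Subroutine_manifold} to \eqref{Eq_Subroutine_gy}, where it is halved; hence $\{\varepsilon_k\}$ is non-increasing and $\lim_k\varepsilon_k>0$ is \emph{equivalent} to there being only finitely many such (TMG$\to$FRG) transitions. So I would assume $\lim_k\varepsilon_k=0$, which forces infinitely many TMG$\to$FRG transitions, i.e.\ $\tau_o(i)=\tau(2i+1)<+\infty$ for all $i$. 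By Proposition \ref{Prop_finite_terminiate_gy_joint_nondeg} every \eqref{Eq_Subroutine_gy} phase terminates in finitely many steps, so each TMG$\to$FRG transition is followed by an FRG$\to$TMG transition; therefore $\tau_e(i)=\tau(2i)<+\infty$ for all $i$, the algorithm runs infinitely many complete cycles, and $\tau(j)\to+\infty$.

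Next I would locate the entry points of the \eqref{Eq_Subroutine_manifold} phases. At an FRG$\to$TMG transition $\tau_e(i)$ the switching rule in Algorithm \ref{Alg_TPHSD} enforces $\norm{\nabla_y g(x_{\tau_e(i)},y_{\tau_e(i)})}\leq\varepsilon_{\tau_e(i)}$, and since $\varepsilon_{\tau_e(i)}\to 0$ these points approach $\M$; once inside $\Omega$, Lemma \ref{Le_welldef_penalty_joint_nondeg} gives $\mathrm{dist}((x_{\tau_e(i)},y_{\tau_e(i)}),\M)\leq\frac{2}{\sigma}\varepsilon_{\tau_e(i)}$, so for all large $i$ the entry point lies well inside $\Omega$ and $\varepsilon_{\tau_e(i)}\leq\frac{\sigma}{4}$. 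For such $i$, Lemma \ref{Le_error_esti_ukek_joint_nondeg} shows that as long as an iterate stays in $\Omega$ the first stay-test $\norm{u_k+e_k}\geq\varepsilon_k\norm{\nabla_y g(\xk,\yk)}$ holds automatically (because $\frac{\sigma}{4}\geq\varepsilon_k$). Consequently a switch out of a late \eqref{Eq_Subroutine_manifold} phase can only be caused by the second test $\norm{\nabla_y g(\xk,\yk)}\leq\varepsilon_{\tau_e(i)}$ failing, or by the iterate leaving $\Omega$.

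I would then trap the iterates in \eqref{Eq_Subroutine_manifold}. Applying Proposition \ref{Prop_restrict_manifold_joint_nondeg_New}, re-indexed to start at $\tau_e(i)$: since the tail stepsizes satisfy \eqref{Eq_Cond_Stepsizes} for $i$ large and the entry lies in $\Omega$, the \eqref{Eq_Subroutine_manifold} iterates stay in $\Omega$, so the first test persists and the iterate never leaves $\Omega$. It remains to preserve $\norm{\nabla_y g(\xk,\yk)}\leq\varepsilon_{\tau_e(i)}$. Here I would use the per-step estimate \eqref{Eq_Prop_restrict_manifold_joint_nondeg_0}, which makes $\norm{\nabla_y g}$ non-increasing whenever it exceeds the ``floor'' $\tfrac{12L_f}{\sigma}\tfrac{\eta_k}{\theta_k}$ and bounds any rebound by $\tfrac{18L_f}{\sigma}\tfrac{\eta_k}{\theta_k}$; since $\tfrac{\eta_k}{\theta_k}\to 0$, this floor eventually lies below $\varepsilon_{\tau_e(i)}$, so the second test is maintained for all $k\geq\tau_e(i)$. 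Both stay-tests then hold forever, the \eqref{Eq_Subroutine_manifold} phase is infinite, $\tau_o(i)=+\infty$, and no further halving occurs (indeed Theorem \ref{Theo_local_convergence_NS_joint_nondeg} even yields convergence), contradicting the infinitude of TMG$\to$FRG transitions. Hence only finitely many halvings occur and $\lim_k\varepsilon_k>0$.

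I expect the main obstacle to be precisely the maintenance of the second stay-test across a whole \eqref{Eq_Subroutine_manifold} phase, i.e.\ reconciling the geometric decay of $\varepsilon_k$ with the slower decay of the ratio $\tfrac{\eta_k}{\theta_k}$: one must guarantee that by the time a phase begins at $\tau_e(i)$ the residual floor $\tfrac{18L_f}{\sigma}\sup_{k\geq\tau_e(i)}\tfrac{\eta_k}{\theta_k}$ has already fallen below the (repeatedly halved) tolerance $\varepsilon_{\tau_e(i)}$. I anticipate that Lemma \ref{Le_Subroutine_manifold_nonincrease} is the device that resolves this, since it rules out a \eqref{Eq_Subroutine_manifold} run that keeps the first test yet stays persistently away from $\Omega$ (forcing $\liminf_k\mathrm{dist}((\xk,\yk),\Omega)=0$); combined with the fact that the phases start at later and later indices, where the perturbation term $\tfrac{\eta_k^2}{\theta_k}$ becomes negligible relative to the descent term, this should certify that the iterate is eventually captured in $\Omega$ near $\M$ and the cycling cannot continue indefinitely.
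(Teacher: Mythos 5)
Your proposal follows essentially the same route as the paper's proof: argue by contradiction from $\varepsilon_k\to 0$, use Proposition \ref{Prop_finite_terminiate_gy_joint_nondeg} to extract infinitely many complete cycles, note that the re-entry points satisfy $\norm{\nabla_y g(x_{\tau_e(i)},y_{\tau_e(i)})}\leq\varepsilon_{\tau_e(i)}\to 0$ and hence eventually land in $\Omega$ past the index $K$ where the stepsizes satisfy \eqref{Eq_Cond_Stepsizes}, and then combine Lemma \ref{Le_error_esti_ukek_joint_nondeg} (which gives the first stay-test once $\varepsilon_k\leq\sigma/4$) with the trapping results (Proposition \ref{Prop_restrict_manifold_joint_nondeg_New}, Theorem \ref{Theo_local_convergence_NS_joint_nondeg}) to conclude that some late \eqref{Eq_Subroutine_manifold} phase never switches back, so $\varepsilon_k$ stabilizes at $\varepsilon_{\tau_e(i^*)}>0$, contradicting the hypothesis. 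That is exactly the paper's argument.

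The one step where you go beyond the paper is the maintenance of the second stay-test $\norm{\nabla_y g(\xk,\yk)}\leq\varepsilon_{\tau_e(i^*)}$ over the entire phase, and here your resolution is not airtight. Lemma \ref{Le_Subroutine_manifold_nonincrease} is not the paper's device for this proposition (the paper invokes it only in the proof of Theorem \ref{Theo_convergence_NS_joint_nondeg}, after $\varepsilon_{\min}>0$ is already known), and it cannot close the point: it only yields $\liminf_k\mathrm{dist}((\xk,\yk),\Omega)=0$ and says nothing about keeping the residual below a prescribed tolerance. Likewise, "$\eta_k/\theta_k\to 0$" does not by itself ensure that for some fixed $i^*$ the rebound floor $\frac{18L_f}{\sigma}\sup_{k\geq\tau_e(i^*)}\frac{\eta_k}{\theta_k}$ from \eqref{Eq_Prop_restrict_manifold_joint_nondeg_0} has already fallen below $\varepsilon_{\tau_e(i^*)}$, since under the contradiction hypothesis the tolerance is halved once per cycle while the stepsize ratio may decay only polynomially in $k$; the race you flag in your last paragraph is real and is left unresolved in your write-up. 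To be fair, the paper's own proof passes over this point silently — it verifies only containment in $\Omega$ and the first test and then asserts that no switch occurs — so your proposal matches the published argument in structure and is, if anything, more candid about where the delicate step lies.
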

\begin{proof}
    We prove this proposition by contradiction, that is, we assume $\varepsilon_k \to 0$. Then the update schemes in Algorithm \ref{Alg_TPHSD} illustrates that $|\ca{S}_{\mathrm{idx}}|$ is infinite. 
    From Assumption \ref{Assumption_alg_smooth}, there exists $K > 0$ such that $\sup_{k\geq K} \eta_k \leq \frac{\rho}{8M_g L_f}$, $\sup_{k\geq K}\theta_k \leq \min\left\{\frac{1}{M_p + M_g}, \frac{\rho}{8 M_g^2 L_g + 8M_e M_g L_g^2} \right\}$, and $\sup_{k\geq K}\frac{\eta_k}{\theta_k} \leq \frac{\sigma^2 \rho}{32L_fM_g}$.

    Moreover, from the definition of $\tau_{e}$, it holds for any $i\geq 1$ that $\varepsilon_{\tau_e(i+1)} = \frac{\varepsilon_{\tau_e(i)}}{2}$ and $\norm{\nabla_y g(x_{\tau_e(i)}, y_{\tau_e(i)})} \leq \varepsilon_{\tau_e(i)}$. 
    Therefore, there exists $i^* > 0$ such that $\tau_e(i^*) > K$ and $(x_{\tau_e(i^*)}, y_{\tau_e(i^*)}) \in \Omegajoint$. Then from Theorem \ref{Theo_local_convergence_NS_joint_nondeg} and Lemma \ref{Le_error_esti_ukek_joint_nondeg}, we can conclude that for any $k \geq \tau_e(i^*)$, $(\xk, \yk) \in \Omegajoint$ and $\norm{\nabla_y g(\xk, \yk)} \leq \frac{4}{\sigma} \norm{u_k + e_k}$, hence the switching from \eqref{Eq_Subroutine_manifold} to \eqref{Eq_Subroutine_gy} would never occur for any $k\geq \tau_e(i^*)$.  Therefore, we can conclude that $\tau_{o}(i^*) = \tau_e(i^*+1) = +\infty$, which leads to $\lim_{k \to +\infty} \varepsilon_k = \varepsilon_{\tau_e(i^*)} > 0$. This contradicts the assumption that $\lim_{k \to +\infty} \varepsilon_k = 0$, hence completing the proof.  
\end{proof}

Now let $\varepsilon_{\min} := \liminf_{k\to +\infty} \varepsilon_k$, then we are ready to present the proof for Theorem \ref{Theo_convergence_NS_joint_nondeg}.

\begin{proof}[\bf Proof for Theorem \ref{Theo_convergence_NS_joint_nondeg}]
    Let $K_0$ be the last \revise{finite} element of $\ca{S}_{\rm idx}$, then Assumption \ref{Assumption_alg_smooth} illustrates that there exists $K > K_0$ such that $\sup_{k\geq K} \eta_k \leq \frac{\rho}{4M_g L_f}$, $\sup_{k\geq K}\theta_k \leq \min\{\frac{1}{M_p}, \frac{\rho}{4 M_g^2 L_g} \}$, and $\sup_{k\geq K}\frac{\eta_k}{\theta_k} \leq \frac{\min\{\sigma, \varepsilon_{\min}\} \rho}{16L_f}$. 
    Then Lemma \ref{Le_Subroutine_manifold_nonincrease} illustrates that there exists $K_1 > K$ such that $(x_{K_1}, y_{K_1}) \in \Omegajoint$. Therefore, for the iterates $\{(\xk, \yk): k\geq K_1\}$, it follows the update scheme \eqref{Eq_Subroutine_manifold}. Moreover, Proposition \ref{Prop_restrict_manifold_joint_nondeg_New} illustrates that $\{(\xk, \yk): k\geq K_1\} \subseteq \Omegajoint$.  Then, together with Theorem \ref{Theo_local_convergence_NS_joint_nondeg}, we can conclude that any cluster point of $\{(\xk, \yk)\}$ is a first-order stationary point of \eqref{Prob_Con}. This completes the proof. 
\end{proof}

\revise{
\begin{rmk}
    It is worth mentioning that we can explicitly bound the total number of switches between \eqref{Eq_Subroutine_manifold} and \eqref{Eq_Subroutine_gy} in Algorithm \ref{Alg_TPHSD}. According to the adaptive mechanism in Algorithm \ref{Alg_TPHSD}, every transition from \eqref{Eq_Subroutine_manifold} to \eqref{Eq_Subroutine_gy} halves the tolerance $\varepsilon_k$. Moreover, as demonstrated in Theorem \ref{Theo_local_convergence_NS_joint_nondeg} and Proposition \ref{Prop_stable_joint_nondeg}, for a sufficiently large $K$, with the stepsizes satisfying \eqref{Eq_Cond_Stepsizes} for any $k\geq K$, the iterates $\{(\xk, \yk): k\geq K\}$ generated by \eqref{Eq_Subroutine_manifold} stay within $\Omega$, where $\sigma_{\min}(J_g(x, y)) \geq \frac{\sigma}{2} \geq \rho$ holds for any $k\geq K$. Therefore, once the corresponding $\varepsilon_k \leq \rho$, the condition for executing \eqref{Eq_Subroutine_manifold} is always satisfied, and no further switching occurs. Consequently, $\varepsilon_k$ is halved for at most $\max\{0, \lceil \log_2(\frac{\varepsilon_0}{\rho}) \rceil\}$ times. Moreover, since Proposition \ref{Prop_finite_terminiate_gy_joint_nondeg} demonstrates that every switch to \eqref{Eq_Subroutine_gy} is followed by a return to \eqref{Eq_Subroutine_manifold}, the total number of switches in Algorithm \ref{Alg_TPHSD} after the $K$-th iteration is  bounded by $2 \max\left\{0, \left\lceil \log_2\left(\frac{\varepsilon_0}{\rho}\right) \right\rceil \right\} +1$, 
    where $\varepsilon_0$ denotes the initial tolerance in the initialization of Algorithm \ref{Alg_TPHSD}, and $\lceil \cdot \rceil$ the ceiling of a real number.

    Notice that $\varepsilon_0$ is a pre-fixed hyper-parameter in Algorithm \ref{Alg_TPHSD}, $\rho$  only depends on the optimization problem \eqref{Prob_BLO}, and $K$ only depends on the choices of stepsizes $\{\eta_k\}$ and $\{\theta_k\}$, hence $\varepsilon_0$, $\rho$, and $K$ can be regarded as constants in the execution of Algorithm \ref{Alg_TPHSD}. Therefore, the total number of switches back and forth between \eqref{Eq_Subroutine_gy} and \eqref{Eq_Subroutine_manifold} throughout the execution of Algorithm \ref{Alg_TPHSD} is upper-bounded by $2 \max\left\{0,\left\lceil \log_2\left(\frac{\varepsilon_0}{\rho}\right) \right\rceil \right\} +K+1$.
\end{rmk}
}

\section{Numerical Experiments}

This section presents preliminary numerical experiments to evaluate the computational efficiency of our proposed Algorithm \ref{Alg_TPHSD}. We compare its performance against established methods for nonconvex-nonconvex bilevel optimization problems, including BOME \cite{liu2022bome} and MEHA \cite{liu2024moreau}. All experiments were conducted using Python 3.10, with NumPy 1.26.3 and PyTorch 2.5.1, on a workstation equipped with an AMD Ryzen 7 5800H CPU and NVIDIA GeForce RTX 3060 GPU. In our numerical experiments, all the differentials of the UL objective function $f$ and the LL objective function $g$ are computed by the automatic differentiation package in PyTorch.

\subsection{Synthetic numerical experiments}

In this subsection, we consider the following synthetic toy example in \cite{liu2021value},
\begin{equation}   
    \label{Eq_NumExp_synthetic_1}
    \begin{aligned}
        \min_{x \in \bb{R}, y \in \Rn} \quad &  \min\{|x|, 1\} + \frac{1}{n}\sum_{i = 1}^n\norm{y_i - c}^2\\
        \text{s. t.}\quad  ~ & y_i \in \mathop{\arg\min}_{z_i \in \bb{R}} ~ \sin(x + z_i -c), \quad \forall i \in [n]. 
    \end{aligned}
\end{equation}
As demonstrated in \cite{liu2021value}, the global minimizers of \eqref{Eq_NumExp_synthetic_1} are $\{ (0, c + 2\pi v - \frac{\pi}{2}): v \in \bb{Z}^n \}$. For all test instances, the initial points $(x_0, y_0)$ are randomly generated by the built-in function \texttt{torch.randn()} in PyTorch, and all the compared algorithms start at the same initial point. For all the compared algorithms, the stepsizes are chosen from $\{\alpha_1 \times 10^{-\alpha_2}: \alpha_1 = 1,3,5,7,9, \text{ and } \alpha_2 = 2,3,4,5,6\}$ that best reduce the function value of $\{f(\xk, \yk)\}$ in the first $100$ iterations. All the other parameters are fixed as suggested in \cite{liu2022bome,liu2024moreau}. 

Figure \ref{Fig_Test_S1} presents the numerical results on solving \eqref{Eq_NumExp_synthetic_1}. Compared with MEHA and BOME, our proposed Algorithm \ref{Alg_TPHSD} converges faster than BOME and MEHA. Moreover, compared with MEHA, the solution generated by Algorithm \ref{Alg_TPHSD} corresponds to a lower function value of $g$, which illustrates that the solution of Algorithm \ref{Alg_TPHSD} achieves better quality than MEHA. Furthermore, compared to the case where $\alpha = 0$, choosing $\alpha = 0.9$ and $\alpha = 0.99$ accelerates the convergence of Algorithm \ref{Alg_TPHSD} while preserving the similar quality of the final solution. These numerical results demonstrate the efficiency of our proposed Algorithm \ref{Alg_TPHSD}.

\subsection{Hyperparameter selection}
In this subsection, we consider the following hyperparameter selection problem,
\begin{equation}   
    \label{Eq_NumExp_HS}
    \begin{aligned}
        \min_{x \in \Rn, y \in \Rp} \quad &  \frac{1}{|\ca{S}_{\rm val}|}\sum_{s \in \ca{S}_{\rm val}}L_{\rm val}(\tilde{y}, s)  + r(x,\tilde{y})\\
        \text{s. t.}\quad  ~ & \tilde{y} \in \mathop{\arg\min}_{y \in \Rp} ~ \frac{1}{|\ca{S}_{\rm train}|}\sum_{s \in \ca{S}_{\rm train}}L_{\rm train}(y, s) + r(x, y). 
    \end{aligned}
\end{equation}
Here $x \in \Rn$ and $y \in \Rp$ refer to hyperparameters corresponding to certain neural network weights, respectively. Moreover, $\ca{S}_{\rm val}$ and $\ca{S}_{\rm train}$ refer to the validation dataset and training dataset, respectively. In our numerical experiments, we choose $\ca{S}_{\rm val}$ and $\ca{S}_{\rm train}$ from the CIFAR-10 dataset \cite{krizhevsky2009learning}. Furthermore, the function $L_{\rm val}$ refers to the output of LeNet with ReLU activation function, which is nonconvex, nonsmooth, and definable in an $o$-minimal structure, hence satisfies the requirements in Assumption \ref{Assumption_alg_smooth}. Besides, $L_{\rm train}$ refers to the output of LeNet with GELU activations, hence is infinitely differentiable over $\Rp$.  Additionally, the regularization term $r(x, y)$ is chosen as $r(x, y) := \frac{1}{2} \sum_{i = 1}^n x_i^2 y_i^2$. Under these settings, \eqref{Eq_NumExp_HS} satisfies the requirements of Assumption \ref{Assumption_alg_smooth}. In particular, as the lower-level objective function in \eqref{Eq_NumExp_HS} is infinitely differentiable, Assumption \ref{Assumption_joint_nondegeneracy} is satisfied under generic tilt perturbations. 

Figure \ref{Fig_Test_HC1} illustrates the numerical results. From the figures in Figure \ref{Fig_Test_HC1}, we can observe that all the compared algorithms yield solutions with high training and test accuracy. Compared with BOME and MEHA, our proposed Algorithm \ref{Alg_TPHSD} demonstrates high efficiency while yielding solutions with the highest training and test accuracy. 

\begin{figure}[!tbp]
	\caption{Test results on the performance of all the compared methods for synthetic example \eqref{Eq_NumExp_synthetic_1}.}
	\label{Fig_Test_S1}
	\centering
	\subfigure[$n = 10$]{
		\begin{minipage}[t]{0.32\linewidth}
			\centering
			\includegraphics[width=\linewidth,height=2.8cm,keepaspectratio]{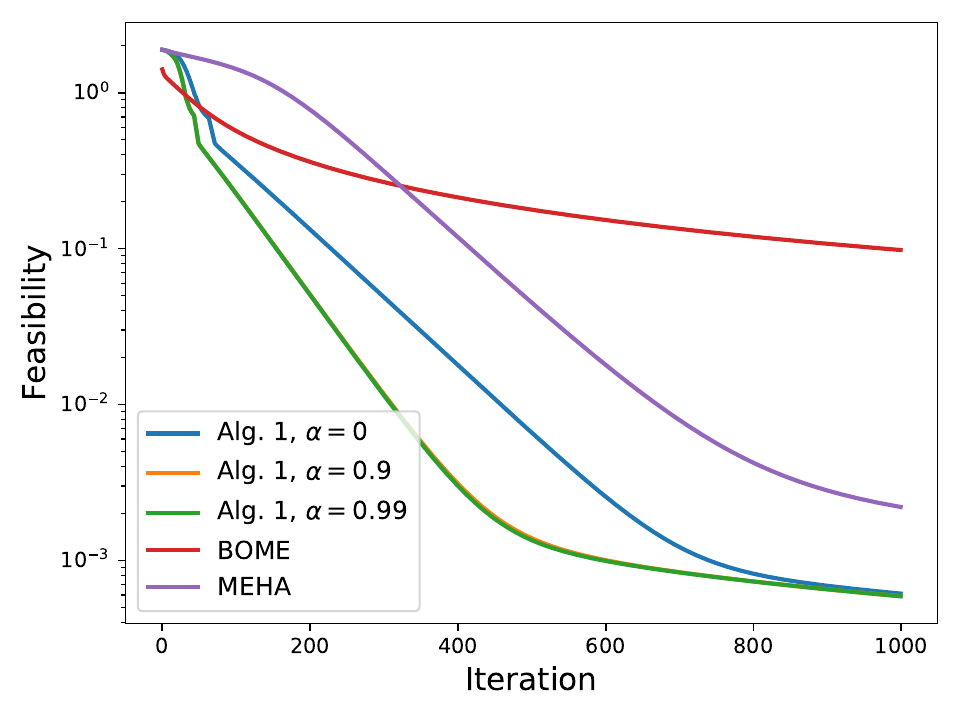}
			\label{Fig:Fig_curve_s1_10_feasLL}
		\end{minipage}%
	}%
	\subfigure[$n = 10$]{
		\begin{minipage}[t]{0.32\linewidth}
			\centering
			\includegraphics[width=\linewidth,height=2.8cm,keepaspectratio]{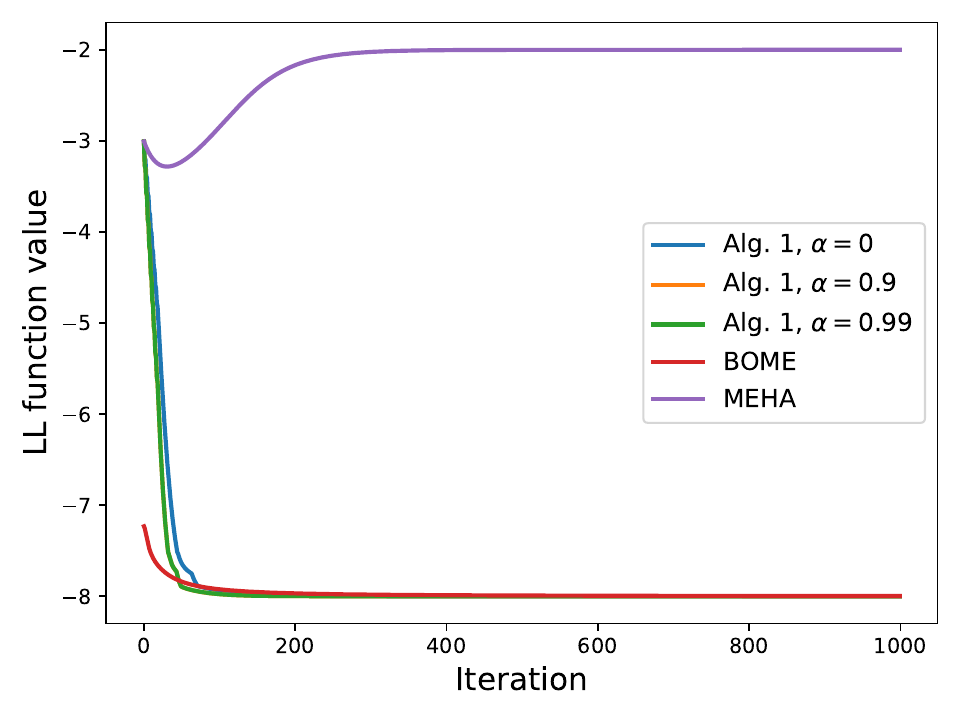}
			\label{Fig:Fig_curve_s1_10_fvalLL}
		\end{minipage}%
	}%
	\subfigure[$n = 10$]{
		\begin{minipage}[t]{0.32\linewidth}
			\centering
			\includegraphics[width=\linewidth,height=2.8cm,keepaspectratio]{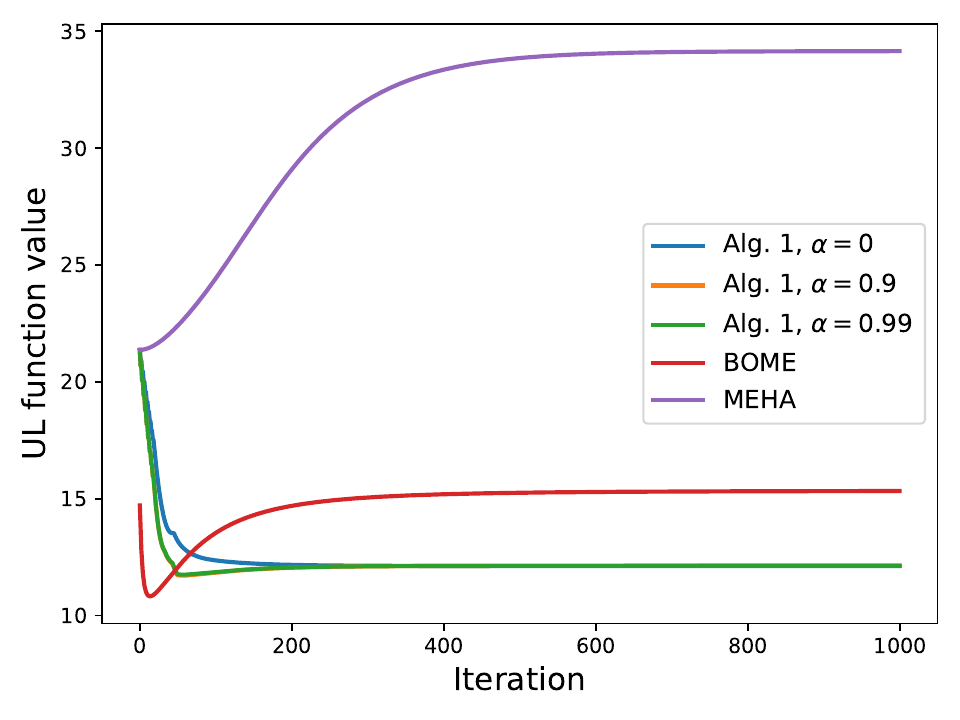}
			\label{Fig:Fig_curve_s1_10_fvalUL}
		\end{minipage}%
	}%

	\subfigure[$n = 50$]{
		\begin{minipage}[t]{0.32\linewidth}
			\centering
			\includegraphics[width=\linewidth,height=2.8cm,keepaspectratio]{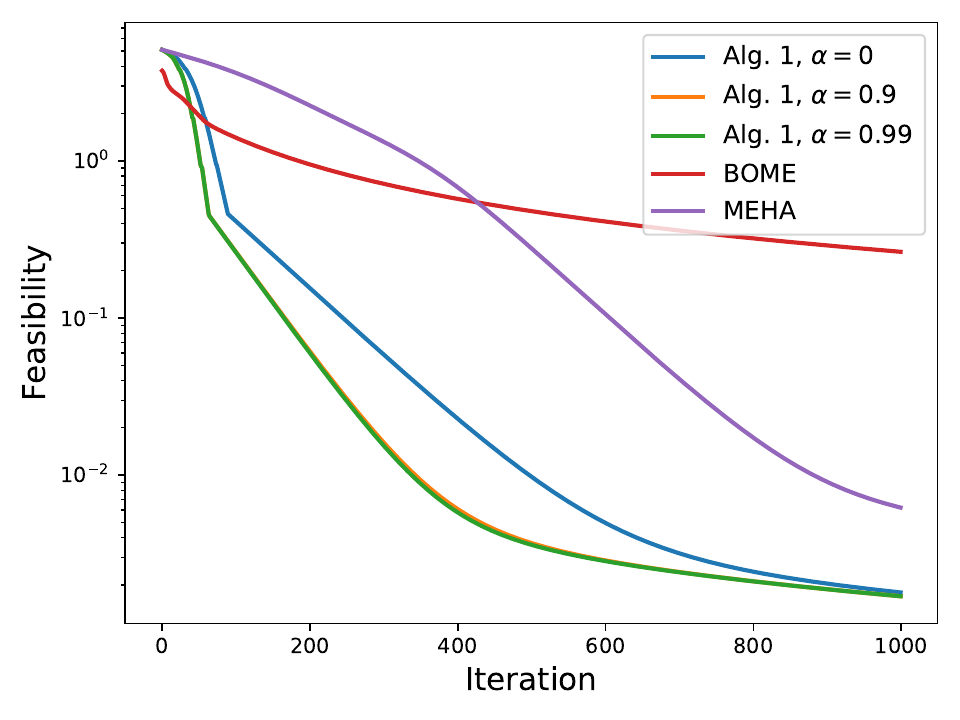}
			\label{Fig:Fig_curve_s1_50_feasLL}
		\end{minipage}%
	}%
	\subfigure[$n = 50$]{
		\begin{minipage}[t]{0.32\linewidth}
			\centering
			\includegraphics[width=\linewidth,height=2.8cm,keepaspectratio]{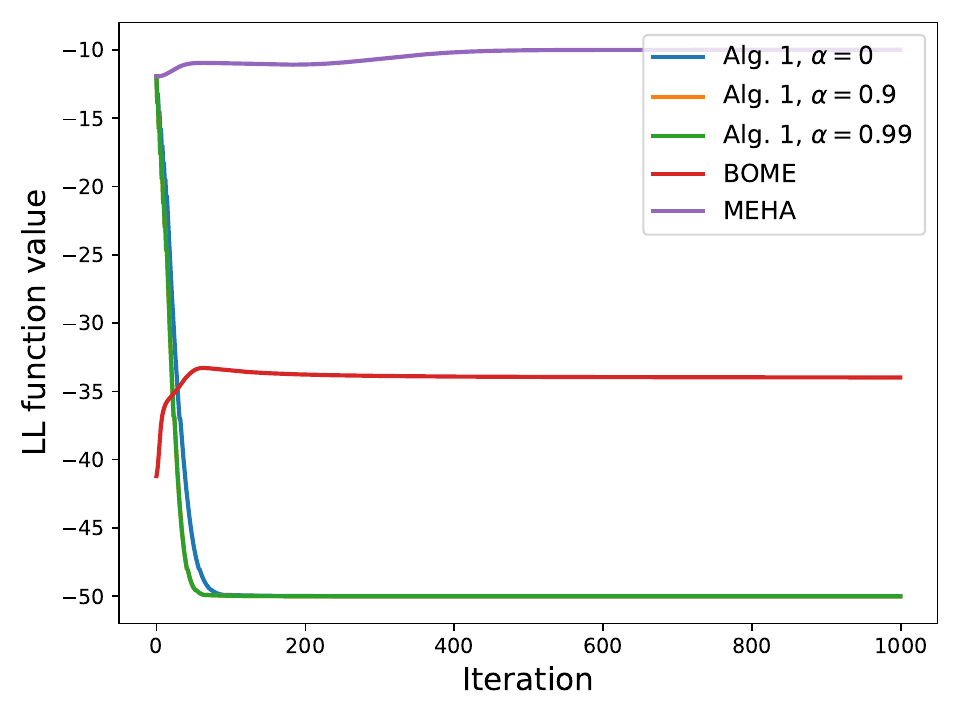}
			\label{Fig:Fig_curve_s1_50_fvalLL}
		\end{minipage}%
	}%
	\subfigure[$n = 50$]{
		\begin{minipage}[t]{0.32\linewidth}
			\centering
			\includegraphics[width=\linewidth,height=2.8cm,keepaspectratio]{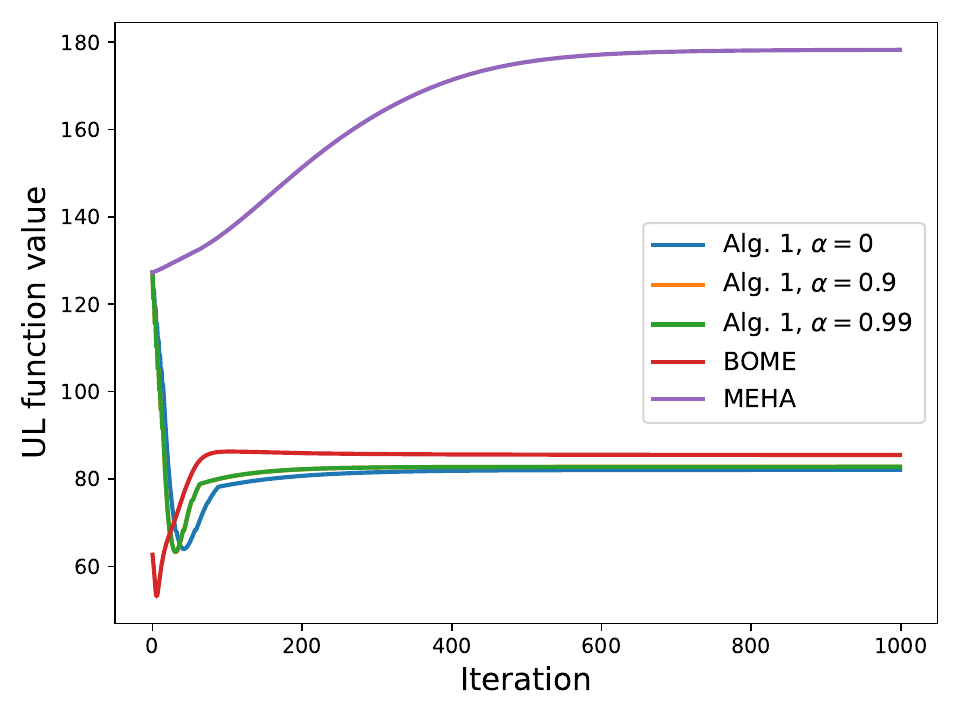}
			\label{Fig:Fig_curve_s1_50_fvalUL}
		\end{minipage}%
	}%
    
	\subfigure[$n = 100$]{
		\begin{minipage}[t]{0.32\linewidth}
			\centering
			\includegraphics[width=\linewidth,height=2.8cm,keepaspectratio]{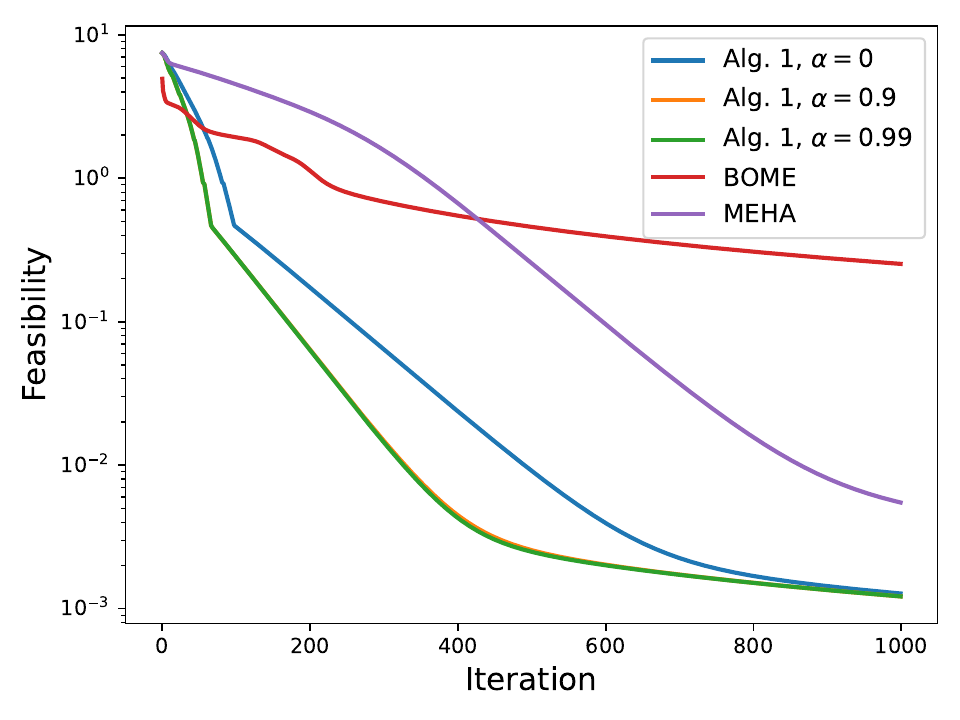}
			\label{Fig:Fig_curve_s1_100_feasLL}
		\end{minipage}%
	}%
	\subfigure[$n = 100$]{
		\begin{minipage}[t]{0.32\linewidth}
			\centering
			\includegraphics[width=\linewidth,height=2.8cm,keepaspectratio]{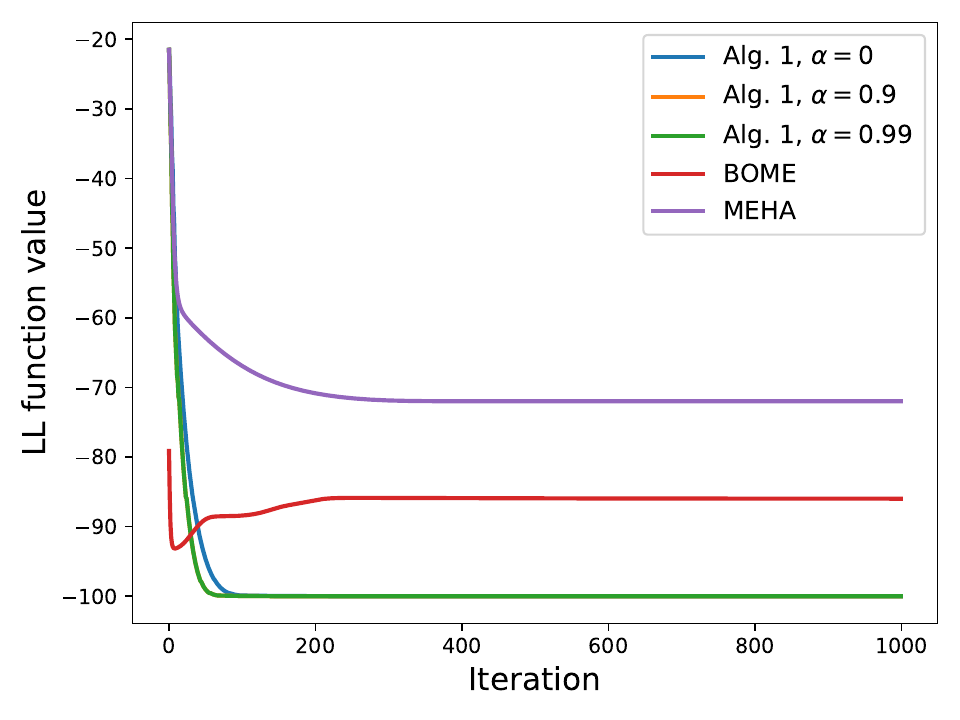}
			\label{Fig:Fig_curve_s1_100_fvalLL}
		\end{minipage}%
	}%
	\subfigure[$n = 100$]{
		\begin{minipage}[t]{0.32\linewidth}
			\centering
			\includegraphics[width=\linewidth,height=2.8cm,keepaspectratio]{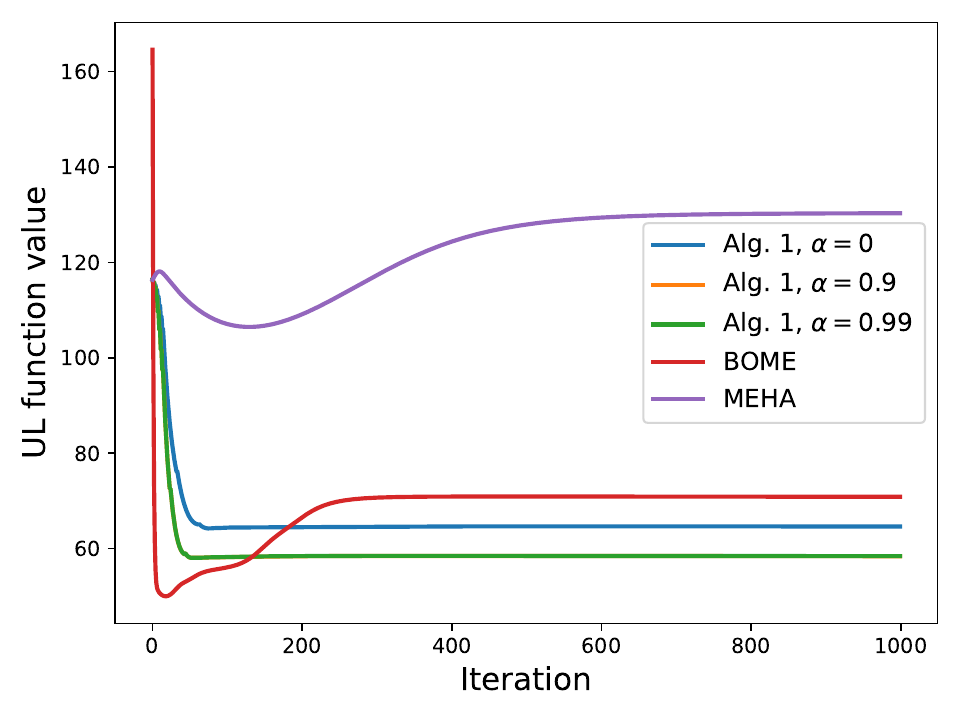}
			\label{Fig:Fig_curve_s1_100_fvalUL}
		\end{minipage}%
	}%
\end{figure}

\begin{figure}[!tbp]
	\caption{Test results on the performance of all the compared methods for the hyperparameter selection problem \eqref{Eq_NumExp_HS}. }
	\label{Fig_Test_HC1}
	\centering
	\subfigure[LL functional values]{
		\begin{minipage}[t]{0.33\linewidth}
			\centering
			\includegraphics[width=\linewidth,height=2.8cm,keepaspectratio]{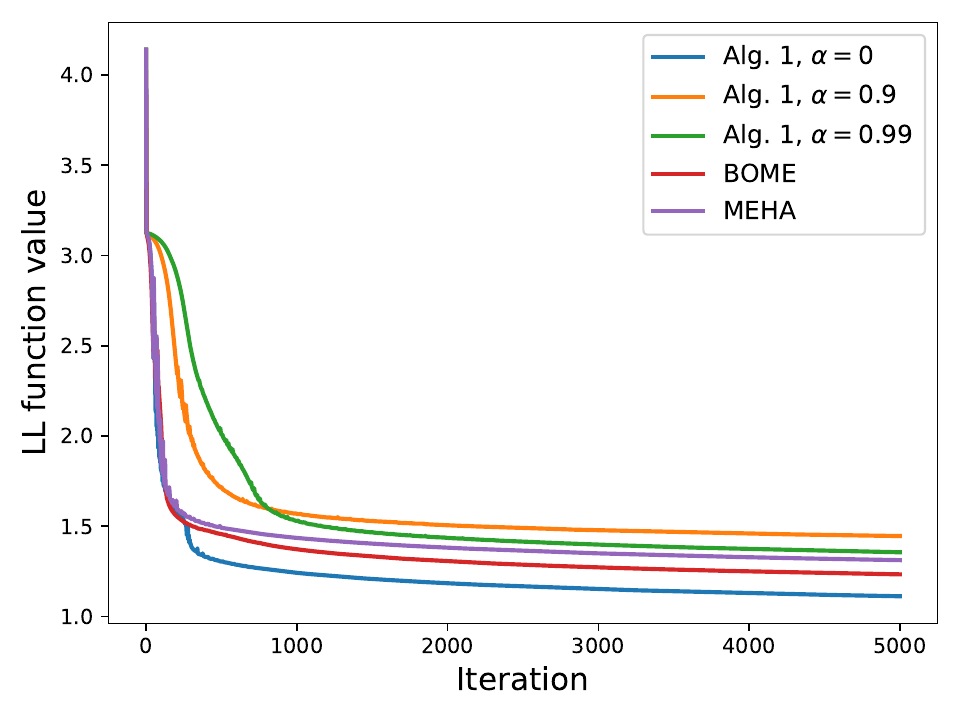}
			\label{Fig:Fig_curve_hc1_0_fvalLL}
		\end{minipage}%
	}%
	\subfigure[UL functional values]{
		\begin{minipage}[t]{0.33\linewidth}
			\centering
			\includegraphics[width=\linewidth,height=2.8cm,keepaspectratio]{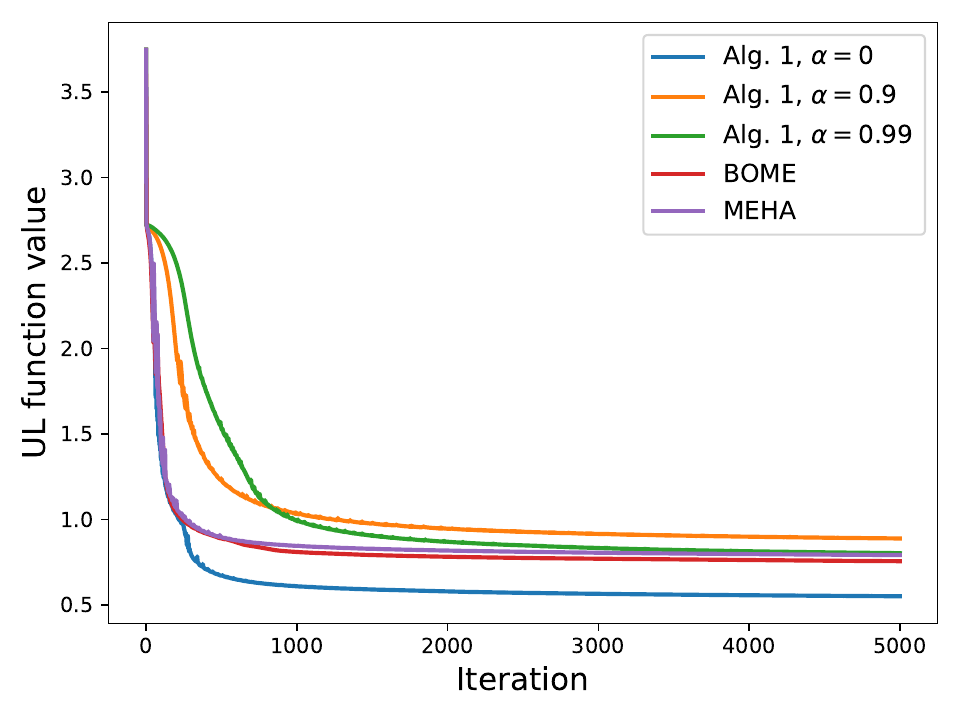}
			\label{Fig:Fig_curve_hc1_0_fvalUL}
		\end{minipage}%
	}%

        \subfigure[Train accuracy]{
		\begin{minipage}[t]{0.33\linewidth}
			\centering
			\includegraphics[width=\linewidth,height=2.8cm,keepaspectratio]{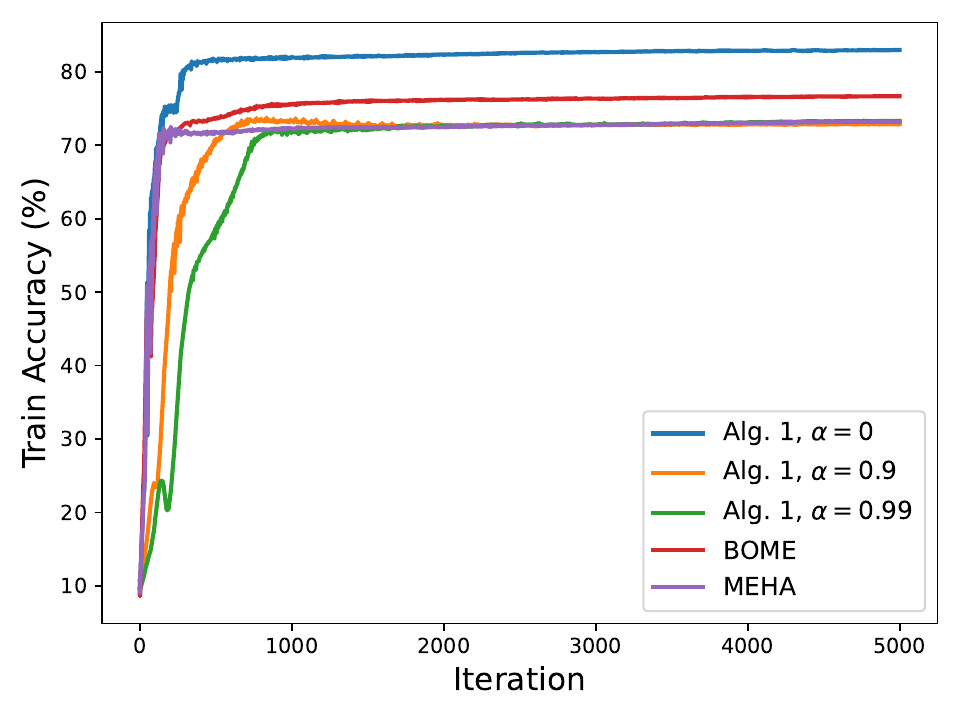}
			\label{Fig:Fig_curve_hc1_0_trainacc}
		\end{minipage}%
	}%
	\subfigure[Test accuracy]{
		\begin{minipage}[t]{0.33\linewidth}
			\centering
			\includegraphics[width=\linewidth,height=2.8cm,keepaspectratio]{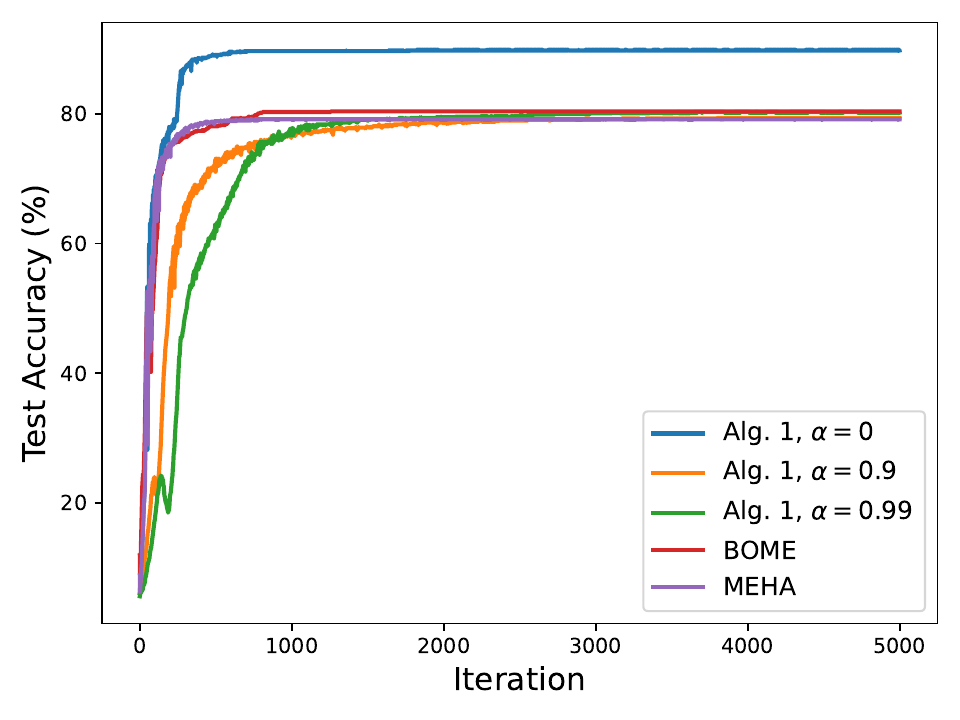}
			\label{Fig:Fig_curve_hc1_0_testacc}
		\end{minipage}%
	}%
\end{figure}

\section{Conclusion}
In this paper, we consider the nonsmooth bilevel optimization problem \eqref{Prob_BLO}, where the LL objective function $g(x, y)$ is a general \revise{three times continuously differentiable} nonconvex function. By utilizing the optimality condition of the LL subproblem of \eqref{Prob_BLO}, we consider the constrained optimization problem \eqref{Prob_Con}, together with the regularity condition that LICQ holds for \eqref{Prob_Con}. This regularity condition holds under almost every tilt perturbation to the lower-level subproblem \eqref{Eq_BLO_LL}, and hence is mild in practice. 

Then we develop Algorithm \ref{Alg_TPHSD} for solving \eqref{Prob_Con}, which alternates between the two-timescale subgradient method \eqref{Eq_Subroutine_manifold} and feasibility restoration scheme \eqref{Eq_Subroutine_gy}. Under mild conditions, we prove that the sequence of iterates $\{(\xk, \yk)\}$ generated by Algorithm \ref{Alg_TPHSD} converges towards the feasible region of \eqref{Prob_Con}, and any of its cluster points is a first-order stationary point of \eqref{Prob_Con}. Preliminary numerical experiments illustrate the high efficiency of Algorithm \ref{Alg_TPHSD}, showing that it achieves superior performance in terms of both computational efficiency and solution quality compared to existing methods for nonconvex bilevel optimization. These results demonstrate the promising potential of our proposed Algorithm \ref{Alg_TPHSD} on solving nonsmooth nonconvex bilevel optimization problems. 

\revise{The theoretical analysis of this paper focuses on the deterministic setting, where the accumulated error is required to vanish asymptotically. Extending the method to fully stochastic bilevel optimization is an interesting topic for future research. For finite-sum objectives, variance-reduction with periodic full-batch evaluations can provide the variance control needed to satisfy the required perturbation conditions while remaining efficient. How to develop efficient stochastic first-order methods based on Algorithm \ref{Alg_TPHSD} with convergence guarantees is also an interesting topic to investigate.}

\appendix

\section{Proof for Global Stability}
\label{Appendix_Global_Stability}
In this section, we aim to prove the global stability of (TMG). We first define several constants under Assumption~\ref{Assumption_f}, Assumption~\ref{Assumption_joint_nondegeneracy}, Assumption~\ref{Assumption_alg_smooth}(2)-(3), and any given $M_{iter} > 0$, as follows:
\begin{itemize}
    \item 
    $L_f := \sup \{\norm{d} : d\in \mathcal{D}_f(x,y), 
    \norm{x} + \norm{y} \leq M_{iter} \} 
    $,
    \item $L_g := \sup_{\norm{x} + \norm{y} \leq M_{iter}} \norm{\nabla g(x, y)}$, \quad
    $M_g := \sup_{\norm{x} + \norm{y} \leq M_{iter}} \norm{J_g(x,y)}$,
     
    \item  $L_p := \sup_{\norm{x} + \norm{y} \leq M_{iter}} \norm{\nabla p(x, y)}$, \quad $M_p := \sup_{\norm{x} + \norm{y} \leq M_{iter}} \norm{\nabla^2 p(x, y)}$, 

        \item $L_A := \sup_{\norm{x} + \norm{y} \leq M_{iter}} \norm{\nabla \A(x, y)}$, \; $M_A := \sup_{\norm{x} + \norm{y} \leq M_{iter}} \norm{\nabla^2 \A(x, y)}$,
    
    \item $M_{g,3} := \sup_{\norm{x} + \norm{y} \leq M_{iter}} \norm{\nabla^3 g(x, y) }$, 
    
    \item $\sigma := \min\Big\{ 1, ~ \inf \big\{\sigma_{\min}\left(J_g(x,y) \right) : \norm{x} + \norm{y} \leq M_{iter},(x,y)\in \mathcal{M}
    \big\} \Big\}$, 
    \item $\rho:= \min\left\{ \frac{\sigma}{4M_g L_g(4M_e +1)}, \frac{M_g}{8 M_e}, \frac{\sigma M_g}{2M_{g,3} +1}, \frac{\sigma}{64M_e}\right\}$. 
\end{itemize}

Then, for any given $M_{iter} > 0$, we define the attractive region $\Omega$ as 
\begin{equation}
    \Omega := \Big\{(x, y) \in \Rn \times \Rp: \mathrm{dist}\left((x, y), \M \right) \leq \frac{\rho}{M_g},  ~\norm{x} + \norm{y} \leq M_{iter} \Big\},
\end{equation}
which depends on both Assumption~\ref{Assumption_joint_nondegeneracy} and Assumption~\ref{Assumption_alg_smooth}.

It is worth mentioning that the choices of the constants $L_f$, $L_g$, $L_p$, $L_A$, $M_g$, $M_p$, $M_A$, $M_{g, 3}$, $\sigma$ and $\rho$ depend on the specific choice of $M_{\mathrm{iter}}$ in Assumption~\ref{Assumption_alg_smooth}(1). When we wish to explicitly emphasize this dependence, we denote these constants by $L_{f, M_{\mathrm{iter}}}$, $L_{g, M_{\mathrm{iter}}}$, for example.
Similarly, we denote the subset $\Omega$ as $\Omega_{M_{\mathrm{iter}}}$ when highlighting its dependence on $M_{\mathrm{iter}}$.

\begin{prop}
    \label{Prop_Appendix_restrict_manifold_joint_nondeg_New}
    Suppose Assumption \ref{Assumption_joint_nondegeneracy}, Assumption \ref{Assumption_f}, and Assumption \ref{Assumption_alg_smooth} hold. For any sequence $\{(\xk, \yk)\}$ generated by \eqref{Eq_Subroutine_manifold} with $(x_0, y_0) \in \{(x, y): \norm{x} + \norm{y} \leq M_{\mathrm{iter}},~ \mathrm{dist}\left((x, y), \M \right) \leq \frac{18 L_f}{\sigma M_g}\sup_{k\geq 0}\frac{\eta_k}{\theta_k}\}$, and the sequences of stepsizes $\{\eta_k\}$ and $\{\theta_k\}$  satisfy \eqref{Eq_Cond_Stepsizes}, it holds that $(\xk, \yk) \in \{(x, y): \norm{x} + \norm{y} \leq M_{\mathrm{iter}},~ \mathrm{dist}\left((x, y), \M \right) \leq \frac{36 L_f}{\sigma^2}\sup_{k\geq 0}\frac{\eta_k}{\theta_k}\}$ for any $k\geq 0$. 
\end{prop}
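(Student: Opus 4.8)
The plan is to mirror the proof of Proposition~\ref{Prop_restrict_manifold_joint_nondeg_New}, exploiting the fact that the (smaller) initial neighborhood here is calibrated precisely to seed the invariant $\norm{\nabla_y g(\xk, \yk)} \leq \frac{18 L_f}{\sigma} r$, where I write $r := \sup_{k\geq 0} \frac{\eta_k}{\theta_k}$. First I would record that the target set in the conclusion is contained in $\Omega$: the stepsize bound $r \leq \frac{\sigma^2 \rho}{128 L_f M_g}$ from \eqref{Eq_Cond_Stepsizes} gives $\frac{36 L_f}{\sigma^2} r \leq \frac{\rho}{2 M_g}$, so Lemma~\ref{Le_welldef_penalty_joint_nondeg} and the constants defined for $M_{iter}$ remain valid along the trajectory. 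The base case is then immediate from the initial condition: since $\nabla_y g$ vanishes on $\M$ and $M_g$ serves as a Lipschitz constant for $\nabla_y g$, projecting $(x_0, y_0)$ onto $\M$ yields $\norm{\nabla_y g(x_0, y_0)} \leq M_g\,\mathrm{dist}((x_0, y_0), \M) \leq \frac{18 L_f}{\sigma} r$, which is exactly the invariant. This is the key simplification relative to Proposition~\ref{Prop_restrict_manifold_joint_nondeg_New}, where the initial violation may be as large as $\rho$; the tighter starting neighborhood removes that transient.

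Next I would reuse the momentum bound $\norm{\mkp} \leq L_f$ and the one-step descent estimate \eqref{Eq_Prop_restrict_manifold_joint_nondeg_0}, namely $p(\xkp, \ykp) - p(\xk, \yk) \leq -\frac{\sigma^2 \theta_k}{4} p(\xk, \yk) + \frac{18 \eta_k^2}{\theta_k} L_f^2$ with $p = \frac{1}{2}\norm{\nabla_y g}^2$, both valid on $\Omega$. The inductive step splits into two cases as in \eqref{Eq_Prop_restrict_manifold_joint_nondeg_1}. If $\norm{\nabla_y g(\xk, \yk)} \geq \frac{12 L_f}{\sigma} r$, then $p(\xk, \yk) \geq \frac{72 L_f^2}{\sigma^2}\bigl(\frac{\eta_k}{\theta_k}\bigr)^2$ makes the contraction term dominate the perturbation term, so $p(\xkp, \ykp) \leq p(\xk, \yk)$ and the gradient norm does not increase, staying $\leq \frac{18 L_f}{\sigma} r$. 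If instead $\norm{\nabla_y g(\xk, \yk)} < \frac{12 L_f}{\sigma} r$, I would bound the one-step displacement $\norm{(\xkp,\ykp)-(\xk,\yk)} \leq L_f \eta_k + (L_p + M_e \rho^2)\theta_k$ from the update scheme and Assumption~\ref{Assumption_alg_smooth}(3), then invoke the Lipschitz continuity of $\nabla_y g$ together with the bounds on $\theta_k$ and $\theta_k^2/\eta_k$ to conclude $\norm{\nabla_y g(\xkp, \ykp)} \leq \frac{18 L_f}{\sigma} r$. Combining the base case with this one-step invariance gives $\norm{\nabla_y g(\xk, \yk)} \leq \frac{18 L_f}{\sigma} r$ for all $k$, and Lemma~\ref{Le_welldef_penalty_joint_nondeg} then translates this into $\mathrm{dist}((\xk, \yk), \M) \leq \frac{2}{\sigma}\norm{\nabla_y g(\xk, \yk)} \leq \frac{36 L_f}{\sigma^2} r$.

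The step I expect to require the most care is threading the induction so that the constants remain legitimate: the descent inequality \eqref{Eq_Prop_restrict_manifold_joint_nondeg_0} and Lemma~\ref{Le_welldef_penalty_joint_nondeg} are only available on $\Omega$, yet membership in $\Omega$ is part of what is being established. I would therefore run the argument as a simultaneous induction, assuming $(x_j, y_j)$ lies in the target set for all $j \leq k$ (so that $\norm{d_j}\leq L_f$ and $\sigma_{\min}(J_g) \geq \frac{\sigma}{2}$ hold), and then verifying both the distance bound and the norm bound at step $k+1$. Under Assumption~\ref{Assumption_alg_smooth}(1) the ball constraint $\norm{\xk}+\norm{\yk}\leq M_{iter}$ in the conclusion is automatic; in the global-stability setting where boundedness is not assumed a priori, this membership is the genuinely new ingredient and is supplied by the coercivity-based Lyapunov argument of the surrounding analysis, with the distance invariance proved here serving as the local attraction estimate.
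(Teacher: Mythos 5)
Your proposal follows essentially the same route as the paper's proof: the same containment of the target set in $\Omega$ via the stepsize bound, the same momentum bound $\norm{\mkp}\leq L_f$ and one-step descent estimate for $p$, the same two-case analysis around the threshold $\frac{12L_f}{\sigma}\sup_k\frac{\eta_k}{\theta_k}$, and the same final appeal to Lemma~\ref{Le_welldef_penalty_joint_nondeg} to convert the gradient bound into the distance bound. The only differences are presentational — you phrase the invariance as an induction where the paper argues by contradiction on a first violating index, and you make explicit the base-case calculation $\norm{\nabla_y g(x_0,y_0)}\leq M_g\,\mathrm{dist}((x_0,y_0),\M)$ and the need to keep the estimates valid on $\Omega$ throughout, both of which the paper handles implicitly.
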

\begin{proof}
    For notational convenience, we let $r= \sup_{k\geq 0}\frac{\eta_k}{\theta_k}$.
    Firstly, from the fact that 
    $
    r \leq \frac{\sigma^2 \rho}{128L_fM_g}$ in \eqref{Eq_Cond_Stepsizes}, we can conclude that 
    $\frac{36 L_f}{\sigma^2}
    r \leq \frac{\rho}{2M_g}$. Therefore, it follows from the definition of $\Omega$ that 
    \begin{equation*}
        \left\{(x, y): \norm{x} + \norm{y} \leq M_{\mathrm{iter}},~ \mathrm{dist}\left((x, y), \M \right) \leq \frac{36 L_f}{\sigma^2} 
        r \right\} \subseteq \Omega.
    \end{equation*}

    Let $d_k = (d_{x, k}, d_{y, k})$, $\mk = (m_{x, k}, m_{y, k})$, $u_k = (u_{x, k}, u_{y, k})$ and $e_k = (e_{x, k}, e_{y, k})$. From the update scheme of $\{\mk\}$, it holds for any $k \geq 0$ that 
    \begin{equation} 
        \norm{\mkp} = \norm{\alpha^{k+1} m_0 +  (1-\alpha)\sum_{i = 0}^k \alpha^{k-i} d_i} \leq \max\left\{\sup_{0\leq i\leq k} \norm{d_i}, \norm{m_0} \right\}\leq L_f. 
    \end{equation}
    Then from the update scheme of \eqref{Eq_Subroutine_manifold}, it holds that 
    \begin{small}
        \begin{equation}
        \label{Eq_Appendix_Prop_restrict_manifold_joint_nondeg_0}
        \begin{aligned}
            &p(\xkp, \ykp) - p(\xk, \yk) \\
            \leq{}& \inner{\nabla p(\xk, \yk), -\eta_k \mkp - \theta_k  (u_k + e_k)} + \frac{M_p}{2} \norm{\eta_k \mkp + \theta_k  (u_k + e_k)}^2\\
            \leq{}& \inner{\nabla p(\xk, \yk), -\eta_k \mkp - \theta_k  (u_k + e_k)} + M_p \eta_k^2 \norm{\mkp}^2 + 2M_p \theta_k^2 \norm{u_k}^2 + 2M_p \theta_k^2 \norm{e_k}^2
            \\
            \leq{}& \inner{\nabla p(\xk, \yk),  - \theta_k  u_k} + \left(M_p \eta_k^2 + \frac{16\eta_k^2}{\theta_k}\right) \norm{\mkp}^2 \\
            &+ \left(2M_p \theta_k^2 +16 \theta_k\right)\norm{e_k}^2 + \frac{\theta_k}{8} \norm{\nabla p(\xk, \yk)}^2
            \\
            \leq{}&- \frac{7\theta_k}{8} \norm{\nabla p(\xk, \yk)}^2 
            +\frac{18\eta_k^2}{ \theta_k}  L_f^2+ 18\theta_k M_e^2 \norm{\nabla_y g(\xk, \yk)}^4 
            \\
            \leq{}& - \frac{7\sigma^2 \theta_k}{16} p(\xk, \yk) + \frac{18\eta_k^2}{ \theta_k}  L_f^2 + 72 \theta_k M_e^2 p(\xk, \yk)^2 \leq -\frac{\sigma^2\theta_k}{4} p(\xk, \yk) + \frac{18\eta_k^2}{\theta_k}  L_f^2. 
        \end{aligned}
    \end{equation}
    \end{small}
    Here the third inequality uses the following inequalities, 
    \begin{equation*}
        \begin{aligned}
            &\inner{\nabla p(\xk, \yk),  - \eta_k \mkp} \leq \frac{16\eta_k^2}{\theta_k} \norm{\mkp}^2 + \frac{\theta_k}{16} \norm{\nabla p(\xk, \yk)}^2,\\
            &\inner{\nabla p(\xk, \yk),  -  \theta_k e_k} \leq \frac{\theta_k}{16} \norm{\nabla p(\xk, \yk)}^2 + 16 \theta_k \norm{e_k}^2. 
        \end{aligned}
    \end{equation*}
    In addition, the fourth inequality follows from Assumption \ref{Assumption_alg_smooth} and the fact that $\theta_k \leq \frac{1}{M_p}$, and the fifth inequality follows from the fact that 
    \begin{equation*}
        \norm{\nabla p(x, y)}^2 = \norm{J_g(x, y) \nabla_y g(x, y)}^2 \geq \frac{\sigma^2}{4} \norm{\nabla_y g(x, y)}^2 = \frac{\sigma^2}{2} p(x, y),
    \end{equation*}
    for any $(x, y) \in \Omegajoint$. The last inequality follows from the choice of $\Omega$, which leads to 
    \begin{equation}
        \label{Eq_Appendix_Prop_restrict_manifold_joint_nondeg_3}
        \norm{\nabla_y g(\xk, \yk)} \leq M_g \mathrm{dist}((\xk, \yk), \M) \leq \rho \leq \frac{\sigma}{16 M_e}, 
    \end{equation}
    and  it implies that $72  M_e^2 p(\xk, \yk) \leq \frac{3\sigma^2}{16} $.

    For any $(\xk, \yk)$ such that $\norm{\nabla_y g(\xk, \yk)} \geq  \frac{12 L_f}{\sigma} 
    r$, it follows from \eqref{Eq_Appendix_Prop_restrict_manifold_joint_nondeg_0} that 
    \begin{equation}
        \label{Eq_Appendix_Prop_restrict_manifold_joint_nondeg_1}
        \norm{\nabla_y g(\xkp, \ykp)} \leq \norm{\nabla_y g(\xk, \yk)}.
    \end{equation}
    On the other hand, for any $(\xk, \yk)$ such that $\norm{\nabla_y g(\xk, \yk)} \leq \frac{12 L_f}{\sigma} 
    r$, it holds  that 
    \begin{equation*}
        \mathrm{dist}((\xk, \yk), \M) \leq \frac{2}{\sigma} \norm{\nabla_y g(\xk, \yk)} \leq \frac{24 L_f}{\sigma^2}
        r.
    \end{equation*}
    Furthermore, from \eqref{Eq_Appendix_Prop_restrict_manifold_joint_nondeg_3} we have that $\norm{\nabla_y g(\xk, \yk)}^2 \leq \rho^2$. As a result, it follows from the update scheme \eqref{Eq_Subroutine_manifold} and Assumption \ref{Assumption_alg_smooth}(3) that  
    \begin{equation*}
        \label{Eq_Appendix_Prop_restrict_manifold_joint_nondeg_2}
            \begin{aligned}
                &\norm{(\xkp, \ykp) - (\xk, \yk)} \leq  \norm{\eta_k \mkp + \theta_k (u_k + e_k)} 
                \\
                \leq{}& \norm{\mkp}\eta_k + (\norm{u_k} + \norm{e_k})\theta_k 
                \;\leq{}\; L_f \eta_k  + \left(L_p + M_e\rho^2 \right) \theta_k. 
            \end{aligned}
    \end{equation*}
    Therefore, from the Lipschitz continuity of $\nabla g$, it holds that 
    \begin{equation*}
        \begin{aligned}
            &\norm{\nabla_y g(\xkp, \ykp)} \leq \norm{\nabla_y g(\xk, \yk)} + M_g \left( L_f \eta_k  + \left(L_p + M_e\rho^2 \right) \theta_k  \right) 
            \\
            \leq{}& \norm{\nabla_y g(\xk, \yk)} + \left(M_g L_f \sup_{k\geq 0} \theta_k + M_g (L_p + M_e\rho^2) \sup_{k\geq 0} \frac{\theta_k^2}{\eta_k} \right) r \\
            \leq{}& \norm{\nabla_y g(\xk, \yk)} + \frac{6 L_f}{\sigma}
            r \leq  \frac{18 L_f}{\sigma}
            r. 
        \end{aligned}
    \end{equation*}

    Now we are ready to prove that $\norm{\nabla_y g(\xk, \yk)} \leq \frac{18 L_f}{\sigma}
    r $ holds for any $k\geq 0$. We prove this statement by contradiction. That is, we assume that there exists an index $i \geq 0$ such that $\norm{\nabla_y g(x_{i+1}, y_{i+1})} > \frac{18 L_f}{\sigma} 
    r$ and $\norm{\nabla_y g(x_{i}, y_{i})} \leq \frac{18 L_f}{\sigma}
    r$. 
    Then  \eqref{Eq_Appendix_Prop_restrict_manifold_joint_nondeg_1} implies that $\norm{\nabla_y g(x_{i+1}, y_{i+1})} \leq \norm{\nabla_y g(x_{i}, y_{i})} \leq \frac{18 L_f}{\sigma} r$, which leads to a contradiction. 
    Therefore, we can conclude that $\norm{\nabla_y g(\xk, \yk)} \leq \frac{18 L_f}{\sigma} r$ holds for any $k\geq 0$. 
    As a result, it follows from Lemma \ref{Le_welldef_penalty_joint_nondeg} that 
    \begin{equation*}
        \sup_{k\geq 0} \mathrm{dist}((\xk, \yk), \M) \leq \frac{2}{\sigma} \sup_{k\geq 0} \norm{\nabla_y g(\xk, \yk)} \leq \frac{36 L_f}{\sigma^2} 
        r.
    \end{equation*}
    This completes the proof.  
\end{proof}

The following lemma illustrates that the accumulation of $\{\theta_k \norm{\nabla_y g(\xk, \yk)}^2\}$ can be controlled by the stepsizes $\{\eta_k\}$. 
\revise{\begin{lem}
    \label{Le_Appendix_accumulation_gy_joint_nondeg_New}
    Suppose Assumption \ref{Assumption_joint_nondegeneracy}, Assumption \ref{Assumption_f}, and Assumption \ref{Assumption_alg_smooth}  hold. Then for any sequence $\{(\xk, \yk)\}$ generated by \eqref{Eq_Subroutine_manifold} with $(x_0, y_0) \in \{(x, y): \norm{x} + \norm{y} \leq M_{\mathrm{iter}},~ \mathrm{dist}\left((x, y), \M \right) \leq \frac{24 L_f}{\sigma^2}\sup_{k\geq 0}\frac{\eta_k}{\theta_k}\}$, and the sequences of stepsizes $\{\eta_k\}$ and $\{\theta_k\}$  satisfy \eqref{Eq_Cond_Stepsizes}, 
    it holds that for any $T>0$,
    \begin{equation}
    \left\{
        \begin{aligned}
            &\sup_{s \geq 0}  \sum_{k = s}^{\Lambda(\lambda(s) + T)} \theta_k \norm{\nabla_y g(\xk, \yk) }^2 \leq \frac{1152M_g^2L_f^2}{\sigma^4} \sup_{k\geq 0} \frac{\eta_k^2}{\theta_k^2} + \frac{72L_f^2 T}{\sigma^2}\sup_{k\geq 0} \eta_k,\\
            &\limsup_{s\to +\infty}  \sum_{k = s}^{\Lambda(\lambda(s) + T)} \theta_k \norm{\nabla_y g(\xk, \yk) }^2 = 0.  
        \end{aligned}
    \right.
    \end{equation}
\end{lem}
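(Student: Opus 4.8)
The plan is to obtain both estimates from the single one-step descent inequality already derived in the proof of Proposition~\ref{Prop_Appendix_restrict_manifold_joint_nondeg_New}, namely the chain culminating in \eqref{Eq_Appendix_Prop_restrict_manifold_joint_nondeg_0},
\[
p(\xkp,\ykp)-p(\xk,\yk)\le-\frac{\sigma^2\theta_k}{8}\norm{\nabla_y g(\xk,\yk)}^2+\frac{18\eta_k^2}{\theta_k}L_f^2 ,
\]
which is valid as soon as the iterates remain inside $\Omega$. Since the initial point lies in the attraction neighborhood, Proposition~\ref{Prop_Appendix_restrict_manifold_joint_nondeg_New} guarantees $\{(\xk,\yk)\}\subseteq\Omega$ for all $k$, so every constant defined above applies and, as shown inside that proof, $\norm{\nabla_y g(\xk,\yk)}\le\frac{18L_f}{\sigma}\sup_{k\ge0}\frac{\eta_k}{\theta_k}$ holds uniformly in $k$. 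First I would rearrange the displayed inequality into the summable form
\[
\theta_k\norm{\nabla_y g(\xk,\yk)}^2\le\frac{8}{\sigma^2}\bigl(p(\xk,\yk)-p(\xkp,\ykp)\bigr)+\frac{144\,\eta_k^2L_f^2}{\sigma^2\theta_k},
\]
exactly as in \eqref{Eq_Le_accumulation_gy_joint_nondeg_0}.

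For the uniform bound (the first estimate) I would sum this inequality over the window $k=s,\dots,\Lambda(\lambda(s)+T)$. The $p$-differences telescope to $p(x_s,y_s)-p(x_{\Lambda(\lambda(s)+T)+1},y_{\Lambda(\lambda(s)+T)+1})\le p(x_s,y_s)$ after discarding the nonnegative final term. The initial penalty value is then controlled through $\mathrm{dist}((x_s,y_s),\M)=O\bigl(\sup_{k}\eta_k/\theta_k\bigr)$ together with $\norm{\nabla_y g(x_s,y_s)}\le M_g\,\mathrm{dist}((x_s,y_s),\M)$, which bounds $p(x_s,y_s)=\tfrac12\norm{\nabla_y g(x_s,y_s)}^2$ and produces the first term $\tfrac{1152M_g^2L_f^2}{\sigma^4}\sup_{k\ge0}\tfrac{\eta_k^2}{\theta_k^2}$. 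For the residual $\tfrac{144L_f^2}{\sigma^2}\sum_{k=s}^{\Lambda(\lambda(s)+T)}\tfrac{\eta_k^2}{\theta_k}$, I would use that \eqref{Eq_Cond_Stepsizes} keeps $\eta_k/\theta_k$ uniformly below $\tfrac12$, hence $\tfrac{\eta_k^2}{\theta_k}\le\tfrac12\eta_k$, together with the elementary window estimate $\sum_{k=s}^{\Lambda(\lambda(s)+T)}\eta_k\le T+\sup_{k\ge0}\eta_k$ coming from the definitions of $\Lambda$ and $\lambda$; this yields the second term of the claimed estimate. Both contributions are independent of $s$, giving the $\sup_{s\ge0}$ bound.

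For the asymptotic statement (the second estimate) I would mirror the argument of Lemma~\ref{Le_accumulation_gy_joint_nondeg_New}. Taking $\limsup_{s\to+\infty}$ of the same telescoped sum, the leading term vanishes because feasibility holds, i.e.\ $\lim_{k\to+\infty}p(\xk,\yk)=0$ by the argument of Lemma~\ref{Le_esti_gy_joint_nondeg} (the containment in $\Omega$ supplies the boundedness that proof requires), so $\limsup_{s}\bigl(p(x_s,y_s)-p(x_{\Lambda(\lambda(s)+T)+1},y_{\Lambda(\lambda(s)+T)+1})\bigr)=0$. Meanwhile the residual is handled by writing $\tfrac{\eta_k^2}{\theta_k}=\tfrac{\eta_k}{\theta_k}\,\eta_k$ and combining $\tfrac{\eta_k}{\theta_k}\to0$ with $\sum_{k=s}^{\Lambda(\lambda(s)+T)}\eta_k\le T+\sup_k\eta_k$, so that $\limsup_{s}\sum_{k=s}^{\Lambda(\lambda(s)+T)}\tfrac{\eta_k^2}{\theta_k}=0$.

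The main obstacle is not the summation itself but securing the two inputs that make it work: the containment $\{(\xk,\yk)\}\subseteq\Omega$ with the attendant uniform bound on $\norm{\nabla_y g(\xk,\yk)}$, which underlies both the first term and the validity of all the constants, and the feasibility $\lim_k p(\xk,\yk)=0$, needed to annihilate the telescoped term in the $\limsup$. Both are furnished by results already in place, namely Proposition~\ref{Prop_Appendix_restrict_manifold_joint_nondeg_New} and Lemma~\ref{Le_esti_gy_joint_nondeg}, so the remaining effort is the routine but careful bookkeeping of the stepsize sums under \eqref{Eq_Cond_Stepsizes}, in particular bounding the window sums $\sum_{k=s}^{\Lambda(\lambda(s)+T)}\eta_k$ uniformly in $s$ by $T+\sup_k\eta_k$ and keeping track of the absolute constants so as to recover the stated coefficients.
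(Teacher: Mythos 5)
Your proposal follows essentially the same route as the paper's own proof: the same rearrangement of the one-step descent inequality \eqref{Eq_Appendix_Prop_restrict_manifold_joint_nondeg_0} into a summable form, the same telescoping over the window $[s,\Lambda(\lambda(s)+T)]$ with the initial penalty value controlled via the containment from Proposition~\ref{Prop_Appendix_restrict_manifold_joint_nondeg_New}, and the same use of $\lim_k p(x_k,y_k)=0$ from Lemma~\ref{Le_esti_gy_joint_nondeg} together with $\eta_k/\theta_k\to 0$ for the $\limsup$ statement. The only differences are in bookkeeping of absolute constants (e.g.\ your $144\eta_k^2L_f^2/(\sigma^2\theta_k)$ versus the paper's $72$), where the paper's own tracking is itself somewhat loose, so this does not affect the validity of your argument.
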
}
\begin{proof}
    From the update scheme \eqref{Eq_Subroutine_manifold} and the inequality \eqref{Eq_Prop_restrict_manifold_joint_nondeg_0}, it holds for any $k\geq 0$ that 
    \begin{equation*}
            p(\xkp, \ykp) - p(\xk, \yk) \leq  -\frac{\sigma^2\theta_k}{8} \norm{\nabla_y g(\xk, \yk)}^2 + \frac{18\eta_k^2}{\theta_k}  L_f^2.
    \end{equation*}
    Then we can estimate the upper bound for $\theta_k \norm{\nabla_y g(\xk, \yk)}^2$ by 
    \begin{equation}
        \label{Eq_Appendix_Le_accumulation_gy_joint_nondeg_0}
        \theta_k \norm{\nabla_y g(\xk, \yk)}^2 \leq \frac{8}{\sigma^2} \left( p(\xk, \yk) - p(\xkp, \ykp) \right) + \frac{72 \eta_k^2L_f^2}{\theta_k\sigma^2}. 
    \end{equation}
    Therefore, for any $s \geq 0$, it holds that 
    \begin{equation*}
        \begin{aligned}
            &\sum_{k = s}^{\Lambda(\lambda(s) + T)} \theta_k \norm{\nabla_y g(\xk, \yk) }^2\leq \sum_{k = s}^{\Lambda(\lambda(s) + T)} \left( \frac{8}{\sigma^2} \left( p(\xk, \yk) - p(\xkp, \ykp) \right) + \frac{72 L_f^2}{\sigma^2} \cdot \frac{\eta_k^2}{\theta_k} \right)\\
            \leq{}& \frac{8}{\sigma^2}\sup_{s \geq 0} \left(p(x_s, y_s) - p(x_{\Lambda(\lambda(s) + T) +1}, y_{\Lambda(\lambda(s) + T) +1})  \right) + \frac{72 L_f^2}{\sigma^2} \sup_{s \geq 0} \sum_{k = s}^{\Lambda(\lambda(s) + T)}  \frac{\eta_k^2}{\theta_k}\\
            \leq{}& \frac{1152M_g^2L_f^2}{\sigma^4} \sup_{k\geq 0} \frac{\eta_k^2}{\theta_k^2} + \frac{72L_f^2 T}{\sigma^2}\sup_{k\geq 0} \eta_k. 
        \end{aligned}
    \end{equation*}
    Here the second inequality uses \eqref{Eq_Appendix_Le_accumulation_gy_joint_nondeg_0}, while the final inequality uses  Proposition \ref{Prop_Appendix_restrict_manifold_joint_nondeg_New}. 
    
    Moreover, by Lemma \ref{Le_esti_gy_joint_nondeg}, $\lim_{k\to +\infty}p(\xk, \yk) = 0$, which illustrates that 
    \begin{equation}
        \label{Eq_Appendix_Le_accumulation_gy_joint_nondeg_1}
        \limsup_{s\to +\infty} \left(p(x_s, y_s) - p(x_{\Lambda(\lambda(s) + T) +1}, y_{\Lambda(\lambda(s) + T) +1})  \right) = 0 
    \end{equation}
    holds for any $T > 0$. 
    Therefore, we have 
    \begin{equation*}
         \begin{aligned}
             &0\leq \limsup_{s\to +\infty}  \sum_{k = s}^{\Lambda(\lambda(s) + T)} \theta_k \norm{\nabla_y g(\xk, \yk) }^2\\
             \leq{}& \limsup_{s\to +\infty} \sum_{k = s}^{\Lambda(\lambda(s) + T)} \left( \frac{8}{\sigma^2} \left( p(\xk, \yk) - p(\xkp, \ykp) \right) + \frac{72 L_f^2}{\sigma^2} \cdot \frac{\eta_k^2}{\theta_k} \right)\\
             \leq{}& \frac{8}{\sigma^2}\limsup_{s\to +\infty} \left(p(x_s, y_s) - p(x_{\Lambda(\lambda(s) + T) +1}, y_{\Lambda(\lambda(s) + T) +1})  \right) + \frac{72 L_f^2}{\sigma^2} \limsup_{s\to +\infty} \sum_{k = s}^{\Lambda(\lambda(s) + T)}  \frac{\eta_k^2}{\theta_k}\\
             ={}& \frac{72 L_f^2}{\sigma^2}\limsup_{s\to +\infty} \sum_{k = s}^{\Lambda(\lambda(s) + T)}  \frac{\eta_k}{\theta_k} \cdot \eta_k = 0.
         \end{aligned}
    \end{equation*}
    Here the second inequality uses \eqref{Eq_Appendix_Le_accumulation_gy_joint_nondeg_0}, and the first equality directly follows from \eqref{Eq_Appendix_Le_accumulation_gy_joint_nondeg_1}. Additionally, the final equality uses the fact that $\lim_{k\to +\infty} \frac{\eta_k}{\theta_k} = 0$. 
    This completes the proof. 
\end{proof}

In the following proposition, we show that with appropriate initialization and stepsizes, the error terms in \eqref{Eq_update_auxiliary} can be controlled by the stepsizes $\{\eta_k\}$. 
\begin{prop}
    \label{Prop_Appendix_local_convergence_noise_controll}
    Suppose Assumption \ref{Assumption_joint_nondegeneracy}, Assumption \ref{Assumption_f}, and Assumption \ref{Assumption_alg_smooth} hold. For any sequence $\{(\xk, \yk)\}$ generated by \eqref{Eq_Subroutine_manifold} with $(x_0, y_0) \in \{(x, y): \norm{x} + \norm{y} \leq M_{\mathrm{iter}},~ \mathrm{dist}\left((x, y), \M \right) \leq \frac{24 L_f}{\sigma^2}\sup_{k\geq 0}\frac{\eta_k}{\theta_k}\}$, and the sequences of stepsizes $\{\eta_k\}$ and $\{\theta_k\}$  satisfy \eqref{Eq_Cond_Stepsizes}, it holds for any $T > 0$ that 
    \begin{equation*}
            \limsup_{s \to +\infty} \sup_{s \leq j \leq \Lambda(\lambda(s) + T)}\norm{ \sum_{k = s}^{j} \eta_k \left(\xi_{k+1} - \beta \nabla p(\xk, \yk)\right) } = 0.
    \end{equation*}
    \revise{Moreover, there exists $M_{T, M_{\mathrm{iter}}} > 0$ such that 
    \begin{equation*}
        \sup_{s\geq 0, ~s \leq j \leq \Lambda(\lambda(s) + T)} \norm{ \sum_{k = s}^{j} \eta_k \left(\xi_{k+1} - \beta \nabla p(\xk, \yk)\right) } \leq M_{T, M_{\mathrm{iter}}} \sup_{k\geq 0}  \left( \frac{\eta_k}{\theta_k} + \eta_k\right). 
    \end{equation*}}
\end{prop}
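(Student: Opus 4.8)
The plan is to prove the two claims in turn, reusing the argument of Proposition~\ref{Prop_local_convergence_noise_controll} as much as possible. For the qualitative claim that the $\limsup$ vanishes, I would repeat that proof verbatim, with the single change that Lemma~\ref{Le_accumulation_gy_joint_nondeg_New} is replaced by its quantitative counterpart Lemma~\ref{Le_Appendix_accumulation_gy_joint_nondeg_New}, whose second conclusion is precisely the $\limsup=0$ accumulation estimate needed there. Concretely, I would split $\xi_{k+1}-\beta\nabla p(\xk,\yk)$ into the two pieces $\xi_{k+1}$ and $\beta\nabla p(\xk,\yk)$, bound the partial sums of $\eta_k\norm{\xi_{k+1}}$ through $\eta_k\norm{\xi_{k+1}}\le \theta_k\big(\tfrac{8M_gM_{g,3}}{\sigma^2}+L_AM_e\big)\norm{\nabla_y g(\xk,\yk)}^2+\theta_k^2\norm{r_k}$, and let the accumulation lemma together with $\theta_k^2/\eta_k\to0$ and $\theta_k\to0$ drive this to zero, while the $\beta\nabla p$ piece vanishes because $\norm{\nabla p(\xk,\yk)}\to0$ by Lemma~\ref{Le_esti_gy_joint_nondeg}.

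The genuinely new content is the uniform bound, and the idea is to track the same estimates quantitatively rather than asymptotically. First I would record three uniform ingredients: (i) the constant bound $\norm{r_k}\le 3M_A\big(\norm{u_k}^2+\norm{e_k}^2+\tfrac{\eta_k^2}{\theta_k^2}\norm{\mk}^2\big)$, kept in this form rather than collapsed to a single constant; (ii) the first conclusion of Lemma~\ref{Le_Appendix_accumulation_gy_joint_nondeg_New}, namely $\sup_{s\ge0}\sum_{k=s}^{\Lambda(\lambda(s)+T)}\theta_k\norm{\nabla_y g(\xk,\yk)}^2\le \tfrac{1152M_g^2L_f^2}{\sigma^4}\sup_k\tfrac{\eta_k^2}{\theta_k^2}+\tfrac{72L_f^2T}{\sigma^2}\sup_k\eta_k$; and (iii) the uniform feasibility estimate obtained from Proposition~\ref{Prop_Appendix_restrict_manifold_joint_nondeg_New} (applicable since the iterates remain in the restricted neighborhood), which gives $\norm{\nabla_y g(\xk,\yk)}\le M_g\,\mathrm{dist}((\xk,\yk),\M)\le\tfrac{36L_fM_g}{\sigma^2}\sup_k\tfrac{\eta_k}{\theta_k}$ for every $k$. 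Ingredient~(iii) is the crucial sharpening: the crude bound $\norm{\nabla_y g}\le\rho$ would leave the $\nabla p$ sum at $O(1)$, whereas the feasibility estimate makes it $O(\sup_k\tfrac{\eta_k}{\theta_k})$.

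With these in hand I would bound the $\xi_{k+1}$ partial sum by Term~A, $\big(\tfrac{8M_gM_{g,3}}{\sigma^2}+L_AM_e\big)\sum\theta_k\norm{\nabla_y g}^2$, controlled directly by ingredient~(ii), plus Term~B, $\sum_{k=s}^{\Lambda(\lambda(s)+T)}\theta_k^2\norm{r_k}$, which I expand via ingredient~(i) into $3M_A\big(\sum\eta_k^2\norm{\mk}^2+\sum\theta_k^2\norm{u_k}^2+\sum\theta_k^2\norm{e_k}^2\big)$: the first sum is $O(\sup_k\eta_k)$ after $\sum_{k=s}^{\Lambda(\lambda(s)+T)}\eta_k\le T+1$, and the latter two, through $\theta_k^2\norm{u_k}^2\le M_g^2\sup_k\theta_k\,\theta_k\norm{\nabla_y g}^2$ and the analogous bound $\norm{e_k}^2\le M_e^2\rho^2\norm{\nabla_y g}^2$, reduce once more to ingredient~(ii). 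For the $\beta\nabla p$ partial sum I would use $\norm{\nabla p}\le M_g\norm{\nabla_y g}$ with ingredient~(iii), yielding $\sum\eta_k\beta\norm{\nabla p}\le \beta M_g\tfrac{36L_fM_g}{\sigma^2}\sup_k\tfrac{\eta_k}{\theta_k}(T+1)$. The hard part will be the final bookkeeping: every surviving term must be folded into a single multiple of $\sup_k\big(\tfrac{\eta_k}{\theta_k}+\eta_k\big)$, which forces me to invoke the fixed step-size bounds in \eqref{Eq_Cond_Stepsizes} so that the quadratic factor $\sup_k\tfrac{\eta_k^2}{\theta_k^2}=\big(\sup_k\tfrac{\eta_k}{\theta_k}\big)^2$ can be absorbed as $\big(\sup_k\tfrac{\eta_k}{\theta_k}\big)\cdot\mathrm{const}$; collecting all constants, which depend only on $T$ and $M_{\mathrm{iter}}$, into $M_{T,M_{\mathrm{iter}}}$ then delivers the stated inequality.
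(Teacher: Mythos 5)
Your proposal is correct and follows essentially the same route as the paper's proof: the same splitting of $\xi_{k+1}-\beta\nabla p(\xk,\yk)$ into two partial sums, the same key estimate $\eta_k\norm{\xi_{k+1}}\le \theta_k\bigl(\tfrac{8M_g}{\sigma^2}+L_AM_e\bigr)\norm{\nabla_y g(\xk,\yk)}^2+\theta_k^2\norm{r_k}$, the same appeal to the two conclusions of Lemma~\ref{Le_Appendix_accumulation_gy_joint_nondeg_New} for the qualitative and quantitative claims respectively, and the same use of the uniform feasibility estimate $\norm{\nabla_y g(\xk,\yk)}=O\bigl(\sup_k\tfrac{\eta_k}{\theta_k}\bigr)$ from Proposition~\ref{Prop_Appendix_restrict_manifold_joint_nondeg_New} to make the $\beta\nabla p$ sum $O\bigl(\sup_k\tfrac{\eta_k}{\theta_k}\bigr)$ rather than $O(1)$, with the stepsize conditions \eqref{Eq_Cond_Stepsizes} absorbing $\sup_k\tfrac{\eta_k^2}{\theta_k^2}$ into $\sup_k\tfrac{\eta_k}{\theta_k}$.

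The one place you genuinely deviate is the term $\sum_{k=s}^{\Lambda(\lambda(s)+T)}\theta_k^2\norm{r_k}$: the paper collapses $\norm{r_k}$ to the constant $3M_A(M_g\rho+M_e\rho^2+\rho)$ and then must convert $\sum\theta_k^2$ into a multiple of $\sup_k\tfrac{\eta_k}{\theta_k}$, a step that does not obviously follow from \eqref{Eq_Cond_Stepsizes} alone (it would require something like $\theta_k^3\lesssim\eta_k^2$). Your expansion of $\theta_k^2\norm{r_k}$ into $3M_A\bigl(\eta_k^2\norm{\mkp}^2+\theta_k^2\norm{u_k}^2+\theta_k^2\norm{e_k}^2\bigr)$, with the first piece controlled by $\sum\eta_k\le T+1$ and the latter two folded back into the accumulation bound via $\norm{u_k}^2\le M_g^2\norm{\nabla_y g(\xk,\yk)}^2$ and $\norm{e_k}^2\le M_e^2\rho^2\norm{\nabla_y g(\xk,\yk)}^2$, cleanly lands every surviving term in $O\bigl(\sup_k(\tfrac{\eta_k}{\theta_k}+\eta_k)\bigr)$ and is the tighter bookkeeping here. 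No gaps.
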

\begin{proof}
    Firstly, for any $M_{\mathrm{iter}} > 0$, we set 
    \begin{equation*}
    \small
        M_{T, M_{\mathrm{iter}}} = \max\left\{ \left(\frac{8M_g }{\sigma^2} + L_AM_e\right) \frac{1152M_g^2L_f^2}{\sigma^4} + (L_A + M_A)  + \frac{12 \beta (T+1) L_f L_g}{\sigma}, \left(\frac{8M_g }{\sigma^2} + L_AM_e\right)\frac{72L_f^2 T}{\sigma^2}  \right\}.
    \end{equation*}
    From Lemma \ref{Le_welldef_penalty_joint_nondeg} and the differentiability of $\nabla_y g$, we can conclude that $\A$ is differentiable and $\nabla \A$ is locally Lipschitz continuous over $\Omega$. Together with the uniform boundedness of $\{(\xk, \yk)\}$ and \eqref{Eq_Cond_Stepsizes}, we can conclude that 
    \begin{equation*}
        \norm{r_k} \leq 3M_{A} \left(\norm{u_k}^2 + \norm{e_k}^2 + \frac{\eta_k^2}{\theta_k^2} \norm{\mk}^2 \right) \leq 3M_A(M_g \rho + M_e \rho^2 + \rho)
    \end{equation*}
    Then with the definition of $P_{\A}$ and $R_{\A}$ in \eqref{Eq_defin_PA_RA} and Lemma \ref{Le_error_bound_cdf}, it holds that 
    \revise{
    \begin{equation*}
        \begin{aligned}
            &\norm{\nabla \A(\xk, \yk) u_k } = \norm{P_{\A}(\xk, \yk) u_k + R_{\A}(\xk, \yk)u_k} \\
            ={}& \norm{ R_{\A}(\xk, \yk)u_k} \leq \frac{8M_g}{\sigma^2}  \norm{\nabla_y g(\xk, \yk)}^2. 
        \end{aligned}
    \end{equation*}}
    Therefore, for any $k > 0$, it holds that 
    \begin{equation*}
        \eta_k \norm{\xi_{k+1}} \leq \theta_k \left(\frac{8M_g }{\sigma^2} + L_A M_e\right)\norm{\nabla_y g(\xk, \yk)}^2 + \theta_k^2  \norm{r_k}. 
    \end{equation*}
    As a result, for any $T> 0$, we have  
    \begin{equation}
        \label{Eq_Prop_Appendix_local_convergence_noise_controll_0}
        \begin{aligned}
            &\limsup_{s \to +\infty} \sup_{s\leq j\leq \Lambda(\lambda(s) + T)} \norm{\sum_{k = s}^j \eta_k \xi_{k+1}} \leq \limsup_{s \to +\infty} \sum_{k = s}^{\Lambda(\lambda(s) + T)}\eta_k \norm{\xi_{k+1}} \\
            \leq{}& \limsup_{s \to +\infty} \sum_{k = s}^{\Lambda(\lambda(s) + T)}  \left(  \theta_k \left(\frac{8M_g}{\sigma^2} + L_A M_e\right)\norm{\nabla_y g(\xk, \yk)}^2 + \theta_k^2 \norm{r_k} \right)
            \;=\; 0. 
        \end{aligned}
    \end{equation}
    Here the last equality follows from Lemma \ref{Le_Appendix_accumulation_gy_joint_nondeg_New} and the facts that $\lim_{k \to +\infty} \frac{\theta_k^2}{\eta_k} = 0$ and $\lim_{k \to +\infty} \theta_k = 0$ 
    in Assumption \ref{Assumption_alg_smooth}(2). 
    \revise{Similarly, for any $T> 0$, it holds that 
    \begin{equation}
        \label{Eq_Prop_Appendix_local_convergence_noise_controll_0_New}
        \begin{aligned}
            &\sup_{s\geq 0, ~s\leq j\leq \Lambda(\lambda(s) + T)} \norm{\sum_{k = s}^j \eta_k \xi_{k+1}} \leq \sup_{s \geq 0} \sum_{k = s}^{\Lambda(\lambda(s) + T)}\eta_k \norm{\xi_{k+1}} \\
            \leq{}& \sup_{s \geq 0} \sum_{k = s}^{\Lambda(\lambda(s) + T)}  \left(  \theta_k \left(\frac{8M_g }{\sigma^2} + L_AM_e\right)\norm{\nabla_y g(\xk, \yk)}^2 + \theta_k^2 \sup_{k\geq 0} \norm{r_k} \right)\\
            \leq{}& \left(\left(\frac{8M_g }{\sigma^2} + L_AM_e\right) \frac{1152M_g^2L_f^2}{\sigma^4} + 3M_A(M_g \rho + M_e \rho^2 + \rho) \right)\sup_{k\geq 0}  \frac{\eta_k}{\theta_k} + M_{T, M_{\mathrm{iter}}}\sup_{k\geq 0} \eta_k. 
        \end{aligned}
    \end{equation}}

    Furthermore, Lemma \ref{Le_esti_gy_joint_nondeg} illustrates that $\lim_{k\to +\infty} \norm{\nabla_y g(\xk, \yk)} = 0$, hence $\lim_{k\to +\infty} \norm{\nabla p(\xk, \yk)} = 0$. 
    Notice that the definition of $\Lambda(s)$ and $\lambda(k)$ in Section \ref{Subsection_basic_notation} illustrates that 
    \begin{equation*}
        \sum_{k = s}^{\Lambda(\lambda(s) + T)}  \eta_k \leq T+ \sup_{k\ge0}\eta_k \leq T+1.
    \end{equation*}
    As a result, it holds for any $\beta > 0$ that 
    \begin{equation}
        \label{Eq_Prop_Appendix_local_convergence_noise_controll_1}
        \begin{aligned}
            &\limsup_{s \to +\infty} \sup_{s\leq j\leq \Lambda(\lambda(s) + T)} \norm{\sum_{k = s}^j \eta_k \beta \nabla p(\xk, \yk)} 
            \leq{} \limsup_{s \to +\infty} \sum_{k = s}^{\Lambda(\lambda(s) + T)}  \eta_k \beta \norm{\nabla p(\xk, \yk)} \\
            {}&\leq  \limsup_{s \to +\infty}  \sup_{s\leq k\leq \Lambda(\lambda(s) + T)} \beta (T+1)\norm{\nabla p(\xk, \yk)} \\
            ={}& \limsup_{k\to +\infty} \beta (T+1)\norm{\nabla p(\xk, \yk)}= 0. 
        \end{aligned}
    \end{equation}
    \revise{Similarly, it holds for any $\beta > 0$ that 
    \begin{equation}
        \label{Eq_Prop_Appendix_local_convergence_noise_controll_1_New}
        \begin{aligned}
            &\sup_{s\geq 0,~s\leq j\leq \Lambda(\lambda(s) + T)} \norm{\sum_{k = s}^j \eta_k \beta \nabla p(\xk, \yk)} 
            \;\leq\; \sup_{s \geq 0} \sum_{k = s}^{\Lambda(\lambda(s) + T)}  \eta_k \beta \norm{\nabla p(\xk, \yk)} 
            \\
            \leq{}& \sup_{s\geq 0, ~s\leq k\leq \Lambda(\lambda(s) + T)} \beta T\norm{\nabla p(\xk, \yk)} 
            \;\leq \; \frac{12 \beta (T+1) L_f L_g}{\sigma} \sup_{k\geq 0} \frac{\eta_k}{\theta_k}. 
        \end{aligned}
    \end{equation}}

    As a result, by combining \eqref{Eq_Prop_Appendix_local_convergence_noise_controll_0} and \eqref{Eq_Prop_Appendix_local_convergence_noise_controll_1} together, we can conclude that
    \begin{equation*}
        \begin{aligned}
            &\limsup_{s \to +\infty} \sup_{s \leq j \leq \Lambda(\lambda(s) + T)}\norm{ \sum_{k = s}^{j} \eta_k \left(\xi_{k+1} - \beta \nabla p(\xk, \yk)\right) } \\
            \leq{}& \limsup_{s \to +\infty} \sup_{s\leq j\leq \Lambda(\lambda(s) + T)} \norm{\sum_{k = s}^j \eta_k \xi_{k+1}}  + \limsup_{s \to +\infty} \sup_{s\leq j\leq \Lambda(\lambda(s) + T)} \norm{\sum_{k = s}^j \eta_k \beta \nabla p(\xk, \yk)}\\
            ={}& 0. 
        \end{aligned}
    \end{equation*}

    \revise{Also, by combining \eqref{Eq_Prop_Appendix_local_convergence_noise_controll_0_New} and \eqref{Eq_Prop_Appendix_local_convergence_noise_controll_1_New} together, it holds that 
    \begin{equation*}
        \begin{aligned}
            &\sup_{s\geq 0, ~s \leq j \leq \Lambda(\lambda(s) + T)} \norm{ \sum_{k = s}^{j} \eta_k \left(\xi_{k+1} - \beta \nabla p(\xk, \yk)\right) }  
        \leq 
        M_{T, M_{\mathrm{iter}}} \sup_{k\geq 0}  \left(\frac{\eta_k}{\theta_k} + \eta_k \right).
        \end{aligned} 
    \end{equation*}
    This completes the proof. }
\end{proof}

Then we aim to establish the global stability for the sequence of iterates $\{(\xk, \yk)\}$ generated by \eqref{Eq_Subroutine_manifold}. Specifically, we demonstrate that the sequence remains uniformly bounded, when the UL objective function $f$ is coercive and the stepsizes $\{\eta_k\}$ and $\{\theta_k\}$ are sufficiently small.  

We begin our analysis with the following proposition, which illustrates that the term $\nabla \A(\xk, \yk)m_{k+1} + \beta \nabla p(\xk, \yk)$ can be viewed as an
 approximated evaluation of $\D_{h_{\beta}}(\wk, \zk)$. 
\begin{prop}
    \label{Prop_local_convergence_Dh_esti_New}
    Suppose Assumption \ref{Assumption_joint_nondegeneracy}, Assumption \ref{Assumption_f}, and Assumption \ref{Assumption_alg_smooth}(2)-(3) hold. For any $\delta > 0$, there exists $\alpha_{\text{step}} > 0$ such that for any sequences of stepsizes $\{\eta_k\}$ and $\{\theta_k\}$ satisfying $\sup_{k\geq 0}\max\left\{\eta_k, \theta_k, \frac{\eta_k}{\theta_k}, \frac{\theta_k^2}{\eta_k}\right\} \leq \alpha_{\text{step}}$, and any sequence $\{(\xk, \yk)\}$ generated by \eqref{Eq_Subroutine_manifold} with 
    $$
    (x_0, y_0) \in \{(x, y): \norm{x} + \norm{y} \leq M_{\mathrm{iter}}, ~\mathrm{dist}((x, y), \M) \leq \alpha_{\text{step}}\},
    $$ 
    it holds for all  $k\geq 0$ that $(\xk, \yk) \in \{(x, y): \norm{x} + \norm{y} \leq M_{\mathrm{iter}}, ~\mathrm{dist}((x, y), \M) \leq \delta\}$, and 
    \begin{equation*}
        \nabla \A(\xk, \yk) \mkp + \beta \nabla p(\xk, \yk) \in \D_{h_{\beta}}^{\delta}(\wk, \zk). 
    \end{equation*}
\end{prop}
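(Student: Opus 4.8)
The plan is to upgrade the two asymptotic results of this subsection—the invariance estimate of Proposition~\ref{Prop_Appendix_restrict_manifold_joint_nondeg_New} and the conservative-field estimate in the proof of Proposition~\ref{Prop_local_convergence_Dh_esti}—into \emph{uniform} statements valid for every $k\geq0$, trading smallness of the stepsizes and of the initial distance against the prescribed tolerance $\delta$. Throughout I write $r:=\sup_{k\geq0}\eta_k/\theta_k\leq\alpha_{\text{step}}$, and note that since all four quantities $\sup_k\{\eta_k,\theta_k,\eta_k/\theta_k,\theta_k^2/\eta_k\}$ are bounded by $\alpha_{\text{step}}$, shrinking $\alpha_{\text{step}}$ below the problem constants forces \eqref{Eq_Cond_Stepsizes} to hold, so every lemma of this section applies.

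\emph{Invariance and the distance bound.} First I would establish, by induction on $k$, the invariant $\norm{\nabla_y g(\xk,\yk)}\leq B:=\max\{M_g\alpha_{\text{step}},\tfrac{18L_f}{\sigma}r\}$ together with $(\xk,\yk)\in\Omega$. The base case holds since $\norm{\nabla_y g(x_0,y_0)}\leq M_g\,\mathrm{dist}((x_0,y_0),\M)\leq M_g\alpha_{\text{step}}$. For the inductive step I reuse the descent inequality \eqref{Eq_Appendix_Prop_restrict_manifold_joint_nondeg_0}: when $\norm{\nabla_y g(\xk,\yk)}\geq\tfrac{12L_f}{\sigma}r$ the gradient norm does not increase, while when $\norm{\nabla_y g(\xk,\yk)}<\tfrac{12L_f}{\sigma}r$ the single-step bound $\norm{(\xkp,\ykp)-(\xk,\yk)}\leq L_f\eta_k+(L_p+M_e\rho^2)\theta_k$ and the Lipschitz continuity of $\nabla_y g$ yield $\norm{\nabla_y g(\xkp,\ykp)}\leq\tfrac{18L_f}{\sigma}r\leq B$; either way the invariant propagates. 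Since the error bound $\mathrm{dist}((\xk,\yk),\M)\leq\tfrac{2}{\sigma}\norm{\nabla_y g(\xk,\yk)}$ (which holds near $\M$ under Assumption~\ref{Assumption_joint_nondegeneracy} and is used in the proof of Proposition~\ref{Prop_Appendix_restrict_manifold_joint_nondeg_New}) gives $\mathrm{dist}((\xk,\yk),\M)\leq\tfrac{2B}{\sigma}\leq\tfrac{2}{\sigma}\max\{M_g,\tfrac{18L_f}{\sigma}\}\alpha_{\text{step}}$, it suffices to choose $\alpha_{\text{step}}$ small enough that this quantity is $\leq\min\{\delta,\rho/M_g\}$; the bound $\rho/M_g$ keeps the iterates in $\Omega$ (closing the bootstrap that legitimizes \eqref{Eq_Appendix_Prop_restrict_manifold_joint_nondeg_0}), while the bound $\delta$ is the first claimed conclusion.

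\emph{The conservative-field containment.} For the second conclusion I would mirror the distance estimate in the proof of Proposition~\ref{Prop_local_convergence_Dh_esti}, bounding $\mathrm{dist}\bigl(\nabla\A(\xk,\yk)\mkp+\beta\nabla p(\xk,\yk),\ \D_{h_{\beta}}(\wk,\zk)\bigr)$ by the four terms $L_f\norm{\nabla\A(\wk,\zk)-\nabla\A(\xk,\yk)}$, $\norm{\nabla\A(\wk,\zk)}\,\epsilon$, and $\beta\norm{\nabla p(\xk,\yk)}$, $\beta\norm{\nabla p(\wk,\zk)}$, where $\epsilon$ measures the discrepancy between $\mkp$ and $\D_f$ near $(\xk,\yk)$. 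The first, third, and fourth terms are already controlled by the previous step: $\norm{(\wk,\zk)-(\xk,\yk)}=\norm{(J_g^{\dagger})\tp\nabla_y g(\xk,\yk)}\lesssim\norm{\nabla_y g(\xk,\yk)}/\sigma$ and $\norm{\nabla p}\leq M_g\norm{\nabla_y g}$ are both $O(\alpha_{\text{step}})$, and $\nabla\A$ is $M_A$-Lipschitz on $\Omega$, so each is $\leq\delta/4$ once $\alpha_{\text{step}}$ is small. The crux is the middle term, i.e. a uniform version of Proposition~\ref{Prop_dk_esti_UB}: revisiting its proof I would decouple its two error sources. The geometric tail $2\alpha^{C-1}L_f$ is already uniform in $k$, so I first fix $C$ large enough that $\norm{\nabla\A}\cdot2\alpha^{C-1}L_f\leq\delta/8$; with $C$ fixed, the spatial spread of the iterates over any window of $C$ consecutive steps is at most $C\bigl(L_f\sup_k\eta_k+(L_p+M_e\rho^2)\sup_k\theta_k\bigr)$, which I drive below the level needed for the remaining $\delta/8$ by shrinking $\alpha_{\text{step}}$. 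Since $d_i\in\D_f(x_i,y_i)$ \emph{exactly} in \eqref{Eq_Subroutine_manifold}, there is no additional evaluation error to absorb, and graph-closedness of $\D_f$ makes $\mkp\in\conv(\D_f^{\epsilon}(\xk,\yk))$ with uniformly small $\epsilon$. Summing the four pieces gives $\mathrm{dist}(\cdots,\D_{h_{\beta}}(\wk,\zk))\leq\delta$, and since $\D_{h_{\beta}}^{\delta}(\wk,\zk)\supseteq\D_{h_{\beta}}(\wk,\zk)+\bb{B}_{\delta}(0)$, the desired membership follows for all $k\geq0$.

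\emph{Main obstacle.} The genuine difficulty is the uniform control of the heavy-ball momentum at small indices $k$, where $\mkp$ still remembers the initialization and the $\liminf/\limsup$-type arguments of Proposition~\ref{Prop_dk_esti_UB} and Lemma~\ref{Le_approximate_evaluation} no longer deliver a bound holding from $k=0$. The decoupling above—selecting the look-back window $C$ from the $k$-uniform geometric tail, and only then forcing the per-window spatial spread small via $\alpha_{\text{step}}$—is precisely what converts those asymptotic conclusions into a uniform one; the remaining work is the bookkeeping of keeping the invariance induction and this momentum estimate mutually consistent, as both require the iterates confined to $\Omega$.
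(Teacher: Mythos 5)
Your proposal is correct and follows essentially the same route as the paper: establish uniform invariance of the iterates near $\M$ via the descent inequality \eqref{Eq_Appendix_Prop_restrict_manifold_joint_nondeg_0}, then reuse the $\hat{\delta}_k$ decomposition from Proposition \ref{Prop_local_convergence_Dh_esti} and bound each term by $O(\alpha_{\text{step}})$ or $O(\sqrt{\alpha_{\text{step}}})$ uniformly in $k$. Your explicit decoupling of the momentum error—fixing the look-back window $C$ from the $k$-uniform geometric tail before shrinking $\alpha_{\text{step}}$—is precisely the mechanism behind the uniform bound $\sup_{k}\delta_k^{\star}\lesssim\sup_k\sqrt{\eta_k+\theta_k}$ that the paper imports from Proposition \ref{Prop_dk_esti_UB}, and is if anything spelled out more carefully than in the paper's own proof.
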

\begin{proof}
    For any sequences of stepsizes $\{\eta_k\}$ and $\{\theta_k\}$ that satisfy \eqref{Eq_Cond_Stepsizes},  
    Proposition \ref{Prop_dk_esti_UB} illustrates that there exists $\{\delta_k^{\star}\}$ such that $\lim_{k\to +\infty} \delta_k^{\star} = 0$, $\sup_{k\geq 0} \delta_k^{\star} \leq 4(L_f + 1) \sup_{k\geq 0}\sqrt{\eta_k + \theta_k}$ and $\mkp \in \D_{f}^{\delta_k^\star} (\A(\wk, \zk))$. Notice that $\D_{h_{\beta}}(\wk, \zk) = \nabla \A(\wk,\zk) \D_f(\A(\wk, \zk)) + \beta \nabla p(\wk, \zk)$, we have
    \begin{small}
    \begin{equation*}
        \begin{aligned}
            &\mathrm{dist}\left( \nabla \A(\xk, \yk) \mkp + \beta \nabla p(\xk, \yk), \D_{h_{\beta}}(\wk, \zk) \right)\\
            \leq{}& \mathrm{dist}\left( \nabla \A(\xk, \yk) \mkp, \nabla \A(\wk,\zk)\D_f(\A(\wk, \zk)) \right) + \beta \left( \norm{\nabla p(\xk, \yk)} +  \norm{\nabla p(\wk, \zk)} \right)\\
            \leq{}& 
            \left\| \nabla \A(\xk, \yk) \mkp- \nabla \A(\wk,\zk)\mkp \right\| + \norm{\nabla \A(\wk, \zk)}\delta_k^\star \\
            &+ \beta \left( \norm{\nabla p(\xk, \yk)} +  \norm{\nabla p(\wk, \zk)} \right)
            \\
            \leq{}& L_f\norm{\nabla \A(\wk, \zk) - \nabla \A(\xk, \yk)} + \norm{\nabla \A(\wk, \zk)}\delta_k^\star + \beta \left( \norm{\nabla p(\xk, \yk)} +  \norm{\nabla p(\wk, \zk)} \right).
        \end{aligned}
    \end{equation*}
    \end{small}
    As a result, by choosing 
    \begin{equation*}
        \hat{\delta}_k = \frac{2L_fM_A}{\sigma}\norm{\nabla_y g(\xk, \yk)} + L_A\delta_k^\star + 2\beta \norm{\nabla p(\xk, \yk)},
    \end{equation*}
    it holds that 
    \begin{equation*}
        \nabla \A(\xk, \yk) \mkp + \beta \nabla p(\xk, \yk) \in \D_{h_{\beta}}^{\hat{\delta}_k}(\wk, \zk).
    \end{equation*}
    Additionally, it follows from the Lipschitz continuity of $\nabla \A$, Lemma \ref{Le_Appendix_accumulation_gy_joint_nondeg_New}, and Proposition \ref{Prop_local_convergence_Dh_esti} that, there exists a constant $M_{I} > 0$, such that $\hat{\delta}_k \leq M_I \sup_{k\geq 0} \left( \sqrt{\eta_k + \theta_k} + \frac{\eta_k}{\theta_k} \right)$ holds for any $k\geq 0$. 
    
    Therefore, for any $\delta > 0$, we can choose  $\alpha_{\mathrm{step}} \leq  \frac{\delta^2}{9 (M_I+1)^2}$, to ensure that $\nabla \A(\xk, \yk) \mkp + \beta \nabla p(\xk, \yk) \in \D_{h_{\beta}}^{\delta}(\wk, \zk)$ holds for any $k\geq 0$.  This completes the proof. 
\end{proof}

The following theorem demonstrates the global stability for the iterates $\{(\xk, \yk)\}$ generated by \eqref{Eq_Subroutine_manifold}. 
\begin{theo}
    \label{Theo_local_convergence_NS_joint_nondeg_New}
    Suppose Assumption \ref{Assumption_joint_nondegeneracy}, Assumption \ref{Assumption_f}, and Assumption \ref{Assumption_alg_smooth}(2)-(3) hold, and $f$ is coercive over $\Rn \times \Rp$. 
    Then for any $M > 0$, there exists  $\alpha_{\mathrm{step}} > 0$, such that for any $(x_0, y_0) \in \{(x, y): \norm{x} +\norm{y} \leq M, \mathrm{dist}(x, \M) \leq \alpha_{\mathrm{step}}\}$, any sequences of stepsizes $\{\eta_k\}$ and $\{\theta_k\}$ satisfying $\sup_{k\geq 0}\max\{\eta_k, \theta_k, \frac{\eta_k}{\theta_k}, \frac{\theta_k^2}{\eta_k}\} \leq \alpha_{\text{step}}$, and any sequence $\{(\xk, \yk)\}$ generated by \eqref{Eq_Subroutine_manifold}, it holds that the iterates $\{(\xk, \yk)\}$ are uniformly bounded. 
\end{theo}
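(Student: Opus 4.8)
The plan is to treat the auxiliary sequence $\{(\wk, \zk)\} = \{\A(\xk, \yk)\}$ as an inexact subgradient flow for the coercive Lyapunov function $h_\beta$ from \eqref{Eq_defin_cdf}, and to trap it inside a sublevel tube that coercivity of $f$ forces to be strictly interior to $\{\norm{x}+\norm{y}\le M_{\mathrm{iter}}\}$. I would fix any $\beta>0$ (exactness of the penalty is not needed here, only the Lyapunov structure), so that by Lemma \ref{Le_conservative_field_joint_nondeg} and Lemma \ref{Le_DI_cdf} the function $h_\beta$ is a path-differentiable Lyapunov function for $(\frac{\mathrm dw}{\mathrm dt},\frac{\mathrm dz}{\mathrm dt})\in-\D_{h_{\beta}}(w,z)$, whose critical values are thin by Assumption \ref{Assumption_f}(2). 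The whole argument is a continuation that maintains three invariants simultaneously: $\norm{\xk}+\norm{\yk}\le M_{\mathrm{iter}}$, $\mathrm{dist}((\xk,\yk),\M)\le\delta$, and $h_\beta(\wk,\zk)\le V_0+1$.

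The first task is to fix the constants in the right order, so as to break their circular dependence on the a priori bound $M_{\mathrm{iter}}$. Given $M$, I would pick $\delta_0>0$ small enough that $J_g$ has full column rank on the compact tube $\{\norm{x}+\norm{y}\le M+1,\ \mathrm{dist}((x,y),\M)\le\delta_0\}$ (possible by Assumption \ref{Assumption_joint_nondegeneracy} and continuity), so that $\A$ and $h_\beta$ are well defined there and $V_0:=\sup\{h_\beta(x,y):\norm{x}+\norm{y}\le M,\ \mathrm{dist}((x,y),\M)\le\delta_0\}$ is finite and depends only on $M$. Coercivity of $f$ makes $\{u:f(u)\le V_0+1\}$ bounded, say contained in a ball of radius $R$; I then set $M_{\mathrm{iter}}:=R+2$. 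Because $(\wk,\zk)$ and $\A(\wk,\zk)$ both lie within $O(\delta)$ of $(\xk,\yk)$ in the tube and $f(\A(\wk,\zk))\le h_\beta(\wk,\zk)\le V_0+1$, coercivity yields $\norm{\xk}+\norm{\yk}\le R+O(\delta)\le M_{\mathrm{iter}}-1$. Crucially $M_{\mathrm{iter}}$ depends only on $M$ and not on the stepsizes, so now all of $L_f,M_g,\sigma,\rho$ and the window constant $M_{T,M_{\mathrm{iter}}}$ are frozen, and only afterwards do I shrink $\delta\le\delta_0$ and $\alpha_{\mathrm{step}}$.

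With the constants frozen, the continuation proceeds as follows. The tube invariant $\mathrm{dist}\le\delta$ is propagated by Proposition \ref{Prop_Appendix_restrict_manifold_joint_nondeg_New} together with Lemma \ref{Le_esti_gy_joint_nondeg}. For the sublevel invariant I rewrite the auxiliary update \eqref{Eq_update_auxiliary} as an inexact subgradient step on $h_\beta$: Proposition \ref{Prop_local_convergence_Dh_esti_New} shows the drift $\nabla\A(\xk,\yk)\mkp+\beta\nabla p(\xk,\yk)$ lies in $\D_{h_{\beta}}^{\delta}(\wk,\zk)$, while Proposition \ref{Prop_Appendix_local_convergence_noise_controll} bounds the cumulative noise over every $\lambda$-window of length $T$ by $M_{T,M_{\mathrm{iter}}}\sup_{k\ge0}(\frac{\eta_k}{\theta_k}+\eta_k)\le 2M_{T,M_{\mathrm{iter}}}\alpha_{\mathrm{step}}$. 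Hence the interpolated process of $\{(\wk,\zk)\}$ is a perturbed solution of the Lyapunov differential inclusion with a perturbation that is uniformly small in $\alpha_{\mathrm{step}}$. I would then run the global-stability-under-small-perturbation argument of \cite{josz2023global,xiao2024developing}: using path-differentiability (so that $\frac{\mathrm d}{\mathrm dt}h_\beta(\gamma(t))=\inner{v(t),\dot\gamma(t)}$ for $v(t)\in\D_{h_{\beta}}(\gamma(t))$) and the thin critical values, the trajectory cannot cross the barrier shell $\{V_0<h_\beta\le V_0+1\}$ once $\alpha_{\mathrm{step}}$ is small, so $h_\beta(\wk,\zk)\le V_0+1$ for all $k$. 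Coercivity then pins $\norm{\xk}+\norm{\yk}\le M_{\mathrm{iter}}-1$, the boundary is never reached, the invariants propagate, and uniform boundedness by $M_{\mathrm{iter}}$ follows.

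I expect two coupled difficulties. The first is the circular dependence of every constant on $M_{\mathrm{iter}}$; this is resolved exactly by the ordering above, where $M_{\mathrm{iter}}$ is extracted from coercivity using $M$ alone, before any stepsize is chosen. The second and harder point is that $h_\beta$ is nonsmooth, so there is no classical descent lemma: the per-window change of $h_\beta$ must be estimated through path-differentiability and the vector partial-sum bound of Proposition \ref{Prop_Appendix_local_convergence_noise_controll} (via summation by parts), and one must rule out a slow upward drift of $h_\beta$ accumulating over the infinitely many windows. This is precisely where Assumption \ref{Assumption_f}(2) is indispensable: thin critical values guarantee a genuine gap between the reachable critical values (which stay $\le V_0$ along the decreasing flow) and the barrier level $V_0+1$, on which $\D_{h_{\beta}}$ is bounded away from $0$; the resulting strict Lyapunov decrease dominates the vanishing perturbation and confines the trajectory.
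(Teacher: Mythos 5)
Your proposal is correct and follows essentially the same route as the paper: both fix a level $\tilde r$ (your $V_0+1$) reachable by the Lyapunov function $h_\beta$ from the initial region, use coercivity of $f$ to bound the corresponding sublevel set and thereby define $M_{\mathrm{iter}}$ from $M$ alone before any stepsize is chosen, and then invoke Proposition \ref{Prop_Appendix_local_convergence_noise_controll} together with the perturbed-differential-inclusion stability machinery of \cite{xiao2024developing} to trap $h_\beta(\wk,\zk)$ below that level. The only difference is one of exposition: the paper delegates the barrier-crossing/tracking argument entirely to \cite{xiao2024developing}, whereas you spell out the continuation of the three invariants and the role of the thin critical values explicitly.
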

\begin{proof}
    For any  $M > 0$ and any $\beta > 0$, it follows from the coercivity of $f$ that $h_{\beta}$ is coercive over $\Rn \times \Rp$. Then based on \cite{xiao2024developing}, there exists a constant $\tilde{r} > 0$ that only depends on $h_{\beta}$ and $M$, such that any bounded trajectory tracking the differential inclusion of $\left(\frac{\mathrm{d}x}{\mathrm{d}t}, \frac{\mathrm{d}y}{\mathrm{d}t} \right) \in -\D_{h_{\beta}}(x, y)$ with $\norm{x(0)} + \norm{y(0)} \leq M$ satisfies $h_{\beta}(x, y) \leq \tilde{r}$. 
    
    Notice that $h_{\beta}(x, y) = f(w, z) + \beta p(x, y)$, where $(w, z) = (x, y) - (J_g(x, y)^{\dagger})\tp\nabla_y g(x, y)$. Since $p(x, y) \geq 0$, the condition $h_{\beta}(x, y) \leq \tilde{r}$ implies $f(w, z) \leq \tilde{r}$ and $p(x, y) \leq \tilde{r}/\beta$. From the coercivity of $f$ over $\Rn \times \Rp$ and the local boundedness of $J_g^{\dagger}$, the set $\{(x, y) \in \Rn \times \Rp : h_{\beta}(x, y) \leq \tilde{r}\}$ is bounded. Thus, we can choose a constant $\hat{M}_{\mathrm{iter}} > 0$ such that $\norm{x} + \norm{y} \leq \hat{M}_{\mathrm{iter}}$ holds for any $(x, y)$ satisfying $h_{\beta}(x, y) \leq \tilde{r}$.

    Now, we define $M_{\mathrm{iter}} = \hat{M}_{\mathrm{iter}} + 1$. From Proposition \ref{Prop_Appendix_local_convergence_noise_controll}, for this specific $M_{\mathrm{iter}}$ and for any $\delta > 0$, there exists $\alpha_{\mathrm{step}} > 0$ such that for any sequences of stepsizes $\{\eta_k\}$ and $\{\theta_k\}$ satisfying $\sup_{k\geq 0}\max\left\{\eta_k, \theta_k, \frac{\eta_k}{\theta_k}, \frac{\theta_k^2}{\eta_k}\right\} \leq \alpha_{\mathrm{step}}$, and for any $(x_0, y_0)$ satisfying $\norm{x_0} + \norm{y_0} \leq M$ and $\mathrm{dist}((x_0, y_0), \M) \leq \alpha_{\mathrm{step}}$, we have 
    \begin{equation}
        \label{Eq_Theo_local_convergence_NS_joint_nondeg_New_0}
        (\wkp, \zkp) \in (\wk, \zk) - \eta_k \D_{h_{\beta}}^{\delta}(\wk, \zk) -\eta_k \left(\xi_{k+1} -  \beta \nabla p(\xk, \yk)\right),
    \end{equation}
    and 
    \begin{equation*}
        \sup_{s\geq 0, ~s \leq j \leq \Lambda(\lambda(s) + T)} \norm{ \sum_{k = s}^{j} \eta_k \left(\xi_{k+1} - \beta \nabla p(\xk, \yk)\right) } \leq M_{I} \sup_{k\geq 0} \left(\eta_k + \frac{\eta_k}{\theta_k}\right). 
    \end{equation*}

    Then it follows from \cite{xiao2024developing} and the discrete update scheme \eqref{Eq_Theo_local_convergence_NS_joint_nondeg_New_0} that the iterates $\{(\xk, \yk)\}$ sequentially satisfy $h_{\beta}(\xk, \yk) \leq \tilde{r}$ for any $k \geq 0$. As a result, it holds that $\norm{\xk} + \norm{\yk} \leq \hat{M}_{\mathrm{iter}} < M_{\mathrm{iter}}$ for any $k\geq 0$. This verifies that the iterates never escape the restricted domain $\Omega_{M_{\mathrm{iter}}}$, which completes the proof. 
\end{proof}


\bibliographystyle{siamplain}
\bibliography{ref}
\end{document}